\numberwithin{equation}{section}
\theoremstyle{plain}
\newtheorem{theorem}{Theorem}[section]
\newtheorem{corollary}[theorem]{Corollary}
\newtheorem{lemma}[theorem]{Lemma}
\newtheorem{proposition}[theorem]{Proposition}
\theoremstyle{definition}
\newtheorem{definition}[theorem]{Definition}
\newtheorem{remark}[theorem]{Remark}
\newtheorem{example}[theorem]{Example}
\newtheorem{problem}[theorem]{Problem}
\def\C{{\mathbb C}} 
\def\R{{\mathbb R}} 
\def\N{{\mathbb N}} 
\def\bE{{\mathbb E}} 
\def\cS{{\mathcal S}} 
\def\cA{{\mathcal A}} 
\def\cB{{\mathcal B}} 
\def\cC{{\mathcal C}} 
\def\cE{{\mathcal E}} 
\def\Wg{\mathrm{Wg}} 
\def\Cat{{\rm Cat}} 
\def\Moeb{{\rm M\text{\"o}b}} 
\def\NC{\mathcal{NC}} 
\def\pair{{\rm pair}} 
\def\bfi{{\bf i}} 
\def\up{{\rm asc}} 
\def\insing{{\rm insing}} 
\def\tr{{\rm tr}} 
\def\Tr{{\rm Tr}} 
\def\diag{\,{\rm diag}} 
\def\EV{\,{\rm EV}} 
\def\alg{\mathrm{alg}} 
\def\ideal{{\rm Ideal}} 
\def\Dom{{\rm D}} 
\def\CB{C_{0,\rm b}(\R)} 
\def\CBI{C_{0,\rm b}^\infty(\R)} 
\def\ri{{\rm i}} 
\def\d{{\rm d}} 
\def\ep{\varepsilon} 
\def\tri{\trianglerighteq} 
\def\be{\begin{equation}}
\def\ee{\end{equation}}
\title{Free probability for purely discrete eigenvalues of random matrices}
\author{B. Collins}
\address{
Department of Mathematics \\
Graduate School of Science\\
Kyoto University\\
Sakyo, Kyoto 606-8502\\
Japan
} 
\email{collins@math.kyoto-u.ac.jp}
\author{T. Hasebe}
\address{
Department of Mathematics \\
 Hokkaido University\\
 Kita 10, Nishi 8, Kita-ku \\
 Sapporo 060-0810 \\
 Japan\\
} 
\email{thasebe@math.sci.hokudai.ac.jp}
\author{N. Sakuma}
\address{Department of Mathematics \\
	       Aichi University of Education\\
	       HIrosawa 1 Igaya-cho\\ 
	       Kariya-shi 448-854\\
	       Japan
	       } 
\email{sakuma@auecc.aichi-edu.ac.jp}
\date{\today}
\begin{document}

\maketitle

\begin{abstract}
In this paper, we study random matrix models which are
obtained as a non-commutative polynomial in random matrix variables of two kinds:
(a) a first kind which have a discrete spectrum in the limit,   
(b) a second kind which have a joint limiting distribution in Voiculescu's sense and are globally rotationally invariant.  
We assume that each monomial constituting this polynomial contains at least one variable of type (a), and
show that this random matrix model has a set of eigenvalues that almost surely converges to a deterministic set of numbers that is either finite or accumulating to only zero in the large dimension limit. 
For this purpose we define a framework (cyclic monotone independence) for analyzing discrete spectra and develop the moment method for the eigenvalues of compact (and in particular Schatten class) operators. 
We give several explicit calculations of discrete eigenvalues of our model.  

%
%
\end{abstract}

\section{Introduction}

\subsection{Background}

Free probability (see e.g.\ \cite{VDN})  is  a branch of operator algebras 
that was invented by Voiculescu for the purpose of studying properties of free group factors. 
Later Voiculescu discovered in \cite{Voi91} that free probability has also an application to the 
behavior of eigenvalues of non-commutative polynomials in independent  
large random matrices. This is one of the most striking success of free probability. 

Let $M_n(\C)$ be the set of all $n\times n$ matrices whose entries are complex values. 
When applying free probability to random matrices, the standard assumption is that a family of Hermitian matrices
$B_1(n),\ldots , B_k(n)\in M_n(\C)$ has a {\it (joint) limiting distribution} as $n\to\infty$, meaning that for any non-commutative 
$\ast$-polynomial $P$ in $k$ variables, $\tr_n(P(B_1(n), \dots, B_k(n)))$ admits a finite limit as $n\to\infty$ where
 $\tr_n$ is the normalized trace such that $\tr_{n} (I_n)=1$. 
Then Voiculescu's result \cite{Voi98} (see also \cite{Voi91}) states that if $U(n)$ is a Haar unitary random matrix, 
then with probability one, the enlarged family 
$\{B_1(n),\ldots , B_k(n), U(n)\}$ also has a 
 joint limiting distribution almost surely as $n\to\infty$, and in the limit, 
 $U(n)$ becomes free from $\{B_1(n),\ldots , B_k(n)\}$. 
 
Furthermore, Haagerup-Thorbj{\o}rnsen \cite{HaTh2005}, Male \cite{Male2012} and Collins-Male \cite{CoMa2014}
obtained versions of Voiculescu's results in the context of operator norm convergence. 
What they proved is that the family of matrices
$\{B_1(n),\ldots , B_k(n), U(n)\}$ admits a \emph{strong (joint) limiting distribution} as $n\to\infty$, which means that 
 it has a joint limiting distribution as stated before, and in addition, for any non-commutative self-adjoint $\ast$-polynomial $P$
 in $k+1$ non-commuting variables, the random matrix $P(B_1(n), \dots, B_k(n), U(n))$ 
  has no {\it outliers}, i.e.\ no eigenvalues outside the limiting support of the spectrum.
Before the above results in free probability, several `single random matrix models' were known to have strong limiting distributions; 
for example, this is the case for Wigner matrices under some assumptions, 
and in particular for Gaussian unitary ensembles (GUE) and Wishart matrices (see \cite[Theorem 2.1.22 and Bibliographical notes]{AGZ2010}). 

On the other hand,  in the last 10 years, random matrix models that do not have strong limiting distributions have become fashionable. 
The literature is abundant. We refer for examples to \cite{BBCF,BGN2011,CCDF2009,PRS2013,RS2013,Tao2013} and in particular to the pioneering work of Baik-Ben Arous-P\'ech\'e \cite{BBAP2005} and P\'ech\'e \cite{Pec2006} where outliers of finite rank deformations of Wishart matrices and of GUEs have been studied, respectively.

\subsection{Our model}
The purpose of this paper is to investigate random matrix models similar to those studied in 
\cite{BBAP2005,BBCF,BGN2011,Pec2006}, but our models admit {\it purely discrete spectra} when the dimension 
tends to infinity. In other words, our model has, as a limiting spectrum, the eigenvalues of a selfadjoint compact operator 
on a Hilbert space. 
The precise definition of our model is as follows. For simplicity, the dependence on $n$ being dropped, let 
$\{A_1, \dots, A_k\}$ be a family of $n\times n$ deterministic matrices which has a limiting joint distribution with respect 
to the {\it non-normalized trace} $\Tr_n$, i.e.\ for any non-commutative $\ast$-polynomial $P$ 
without a constant term, 
the following limit exists: 
\be\label{JointA}
\lim_{n\to\infty} \Tr_n(P(A_1, \dots, A_k)). 
\ee 
Let $\{B_1, \dots, B_\ell\}$ be a family of $n\times n$ deterministic matrices which has a limiting joint distribution with respect to $\tr_n$. 
Let $U$ be an $n\times n$ Haar unitary (we can treat several independent Haar unitaries, but for now we restrict to a single Haar unitary). We investigate the limiting eigenvalues of 
\be\label{model}
P(A_1,\dots, A_k, U B_1 U^*, \dots, U B_\ell U^*), 
\ee 
where $P$ is a $k+\ell$ variables selfadjoint non-commutative $\ast$-polynomial $P(\{x_i\}_{i=1}^k, \{y_j\}_{j=1}^\ell)$ such that $P(\{0\}_{i=1}^k, \{y_j\}_{j=1}^\ell)=0$.

\subsection{Main results}
The main results of this paper are as follows. 
\begin{enumerate}[\rm(i)]

\item We introduce and investigate \emph{cyclic monotone independence} which is a universal computation rule for mixed 
moments with respect to weights (Section \ref{sec:cyclic-monotone}) as abstraction of the formula \cite[Lemma 3.1]{Shl}. 
\vspace{2mm}

\item The pair of tuples $(\{A_i\}_{i=1}^k, \{U^\ast B_j U\}_{j=1}^\ell)$ above is asymptotically cyclic monotone with respect 
to $(\Tr_n, \tr_n)$ almost surely (Theorem \ref{ACM3}).  
\vspace{2mm}

\item The eigenvalues of \eqref{model} converge to deterministic eigenvalues of a compact operator almost surely (Corollary \ref{convergence EV}). We then extend (by functional calculus) this result to compact operators $A_1,\dots, A_k$ for which the limit \eqref{JointA} may not exist (Theorem \ref{enhancedACM}).

\vspace{2mm}

\item We compute in Theorem \ref{enhancedACM2} the limiting eigenvalues of \eqref{model} explicitly when the polynomial $P$ is of the forms  
$$
\sum_{i=1}^k x_i y_i x_i^*, \qquad \sum_{i=1}^k y_i x_i y_i^*,\qquad  x y + y x, \qquad \ri(x y -y x).  
$$
\vspace{0mm}

\item We also discuss a generalization of the model \eqref{model} when several independent Haar unitaries appear. We show the almost sure convergence of eigenvalues (Corollary \ref{G convergence EV}) and compute explicit eigenvalues for some polynomials of random matrices (Proposition \ref{enhancedACM3}). 

\end{enumerate}
Our model \eqref{model} is closely related to the recent work of Shlyakhtenko \cite{Shl} where asymptotic infinitesimal freeness was proved for $\{A_1,\dots, A_k\}$ and 
$\{U B_1 U^*, \dots, U B_\ell U^*\}$ when $A_1,\dots,A_k$ are fixed matrices of finite size. We strengthen this result with a 
self-contained proof, and then show an almost sure asymptotic convergence result. 

Shlyakhtenko gives a very interesting interpretation of his result in terms of locations of outliers (discrete spectra) and continuous spectra, assuming that the outliers exist. His arguments strongly suggest that infinitesimal freeness can be useful for outlier problems studied by Baik-Ben Arous-P\'ech\'e \cite{BBAP2005}, P\'ech\'e \cite{Pec2006} and others. On the other hand, our research is 
devoted to purely discrete spectra. In our discrete spectrum model, we are able to show that the ``outliers'' indeed exist almost surely for completely general polynomials in 
 general matrices $A_1,\dots, A_k$ converging to compact operators and rotationally invariant random matrices. 

On methodology, our model needs a new method outside the standard techniques in free probability, since our model 
\eqref{model} {\it converges to $0$} in distribution in the usual sense \cite{Shl}: 
\be
\lim_{n\to\infty} \tr_n\left(P(A_1,\dots, A_k, U B_1 U^*, \dots, U B_\ell U^*)^p \right) =0,\qquad p \in\N. 
\ee
To analyze the discrete spectrum, we develop the moment method with respect to the {\it non-normalized trace} $\Tr_n$. 
The most important point is that the convergence of moments with respect to $\Tr_n$ as $n\to\infty$ implies the pointwise 
convergence of eigenvalues. Together with this moment method, the {\it Weingarten calculus} developed in free 
probability \cite{Collins2003,CoSn2006} enables us to compute moments with respect to $\Tr_n$ and prove the pointwise 
convergence of eigenvalues. When continuous and discrete spectra are mixed, it is not obvious if our method can somehow be extended. 

\subsection{Organization of this paper}
After this introduction, Section \ref{sec:cv-eigenvalues} gathers preliminary materials in order to handle eigenvalue distributions of 
non-commutative random variables that are compact operators. In Section \ref{sec:cyclic-monotone} we introduce 
the notion, central to this paper, of cyclic monotone independence which is a special case of infinitesimal freeness, 
 in the framework of a non-commutative probability space with a tracial weight. 
Section \ref{sec:rmt} shows the almost sure asymptotic cyclic monotone independence of $\{A_1,\dots,A_k\}$ and $\{U B_1U^\ast, \dots, U B_\ell U^\ast\}$. Finally, Section \ref{sec5} provides examples of eigenvalues of our model \eqref{model} in large $n$ limit.

\section{Convergence of eigenvalues}\label{sec:cv-eigenvalues}
In this section, we consider convergence of eigenvalues in a general setting. First, we introduce an order and the 
classification of eigenvalues to prove theorems in this section.
After then, we obtain characterization of convergence of eigenvalues for Schatten class operators from a viewpoint 
of the non-normalized trace. 
The results play crucial roles to handle eigenvalues using non-commutative probability theory. 

We use the following notations in this paper (in particular in this section). 
\begin{enumerate}[\rm(1)]
\item $\CB$: The set of real-valued bounded continuous functions  on $\R$ that vanish in a neighborhood of $0$. 

\item $\|\cdot \|_{[\alpha,\beta]}$: The supremum norm on $C[\alpha,\beta]$, $-\infty<\alpha <\beta<\infty$.

\item $\CBI$: The set of functions $f\in \CB$ that are infinitely differentiable. 

\item $\Tr_H$: The trace on a separable Hilbert space $H$. When $H=M_n(\C)$, $\Tr_H$ is denoted by $\Tr_n$. 

\item $\tr_n$: The normalized trace $\frac{1}{n}\Tr_n$ on $M_n(\C)$. 

\item $\cS^p(H)$: The set of $p$-Schatten class operators on a separable Hilbert space $H$. 

\item $\|\cdot \|_p$: The $p$-Schatten norm. If $a$ is a selfadjoint compact operator with eigenvalues $\{\lambda_i\}_{i\geq1}$ then $\|a\|_p= \left(\sum_{i=1}^\infty |\lambda_i|^p\right)^{1/p}$.

\end{enumerate}

\subsection{Order for eigenvalues} 
It is useful to regard eigenvalues as a multiset. 
\begin{definition}
For a selfadjoint compact operator $a$, we denote by 
$\EV(a)$ the multiset of its eigenvalues. The disjoint union of multisets counts the multiplicity, 
e.g.\ $\{3,2,1,1,0,0,\dots\} \sqcup \{2,1,1,0,0,\dots\} = \{3,2,2,1,1,1,1,0,0,\dots\}$. 
\end{definition}

Let us make remarks about this definition. 
\begin{itemize}
\item We may also view $\EV (a)$ as a positive measure $\mu$ on $\mathbb{R}$, namely $\mu (\{\lambda\})$ is the dimension of the eigenspace of $a$ with eigenvalue $\lambda$. 
\item 
Alternatively, we may view $\EV(a)$ as the collection of all real sequences
$(x_n)$ tending to zero, quotiented by the equivalence relation
$(x_n)\sim (y_n)$ iff there exists a permutation $\sigma$ of $\mathbb{N}$ such that
$x_n=y_{\sigma (n)}$ for all $n$.
\item
Whenever needs be, we extend the notion of eigenvalues abstractly to selfadjoint elements of a $*$-algebra with a 
tracial weight even if the weight is not a trace on a separable Hilbert space. This is defined in Section \ref{sec:cyclic-monotone} 
and is related to Section \ref{sec5}. 
\end{itemize}

In order to discuss the convergence of eigenvalues, it is useful to order them in a nice way. 

\begin{definition}
 We say that a sequence of real numbers $\{r_i\}_{i=1}^\infty$ converging to $0$ is {\it properly arranged} 
 if $|r_i| \geq |r_{i+1}|$ for all $i\in\N$. Note that the proper arrangement may not be unique.  
\end{definition}
Due to the non-uniqueness of proper arrangement, it is sometimes better to decompose a sequence into the 
nonnegative part and nonpositive part. 
 The proper arrangement of the nonnegative part $\{r_i \mid i \in \N, r_i \geq 0\}$ is unique and is denoted 
 by $\{r_i^+\}_{i=1}^\infty$, and similarly $\{r_i^-\}_{i=1}^\infty$ denotes the unique proper arrangement of $\{r_i \mid i \in \N, r_i \leq 0\}$. 

From now on, we always assume that eigenvalues $\{\lambda_i\}_{i\geq1}$ are properly arranged, namely 
\begin{equation}
|\lambda_1| \geq |\lambda_2| \geq \cdots,
\end{equation}
and their nonnegative and nonpositive parts are also properly arranged uniquely,   
\begin{equation}\label{PN}
\lambda_1^+ \geq \lambda_2^+ \geq \cdots \geq 0 \geq \cdots \geq \lambda_2^- \geq \lambda_1^-.
\end{equation}

The properly arranged eigenvalues of a selfadjoint compact operator $a$ on a separable Hilbert space are denoted 
by $\{\lambda_i(a)\}_{i\geq1}$. If the dimension of the Hilbert space is finite then we understand that the index $i$ stops at 
the dimension. Instead of $\lambda_i(a)^\pm$ we use the notation $\lambda_i^\pm(a)$ for the properly arranged nonnegative 
and nonpositive parts of the eigenvalues.

\subsection{Convergence of eigenvalues}

\begin{definition}\label{def:cv-in-eigenvalues}
Let $a, a_k,  k=1,2,3,\dots$ be selfadjoint compact operators on separable Hilbert spaces $H,H_k$, respectively. 
We say that {\it $a_k$ converges to $a$ in eigenvalues} if $\lim_{k\to\infty}\lambda_i^{u}(a_k)  =\lambda_i^{u}(a)$ for any $i\in\N$ and $u\in\{+,-\}$. If a sequence stops at a finite $i$ then infinitely many $0$'s are to be added in the end. Convergence in eigenvalues is denoted by 
\[
\lim_{k\to\infty} \EV(a_k) = \EV(a). 
\]
\end{definition}
\begin{remark}
It seems also natural to define the convergence $a_k \to a$ in eigenvalues by 
\be 
\lim_{k\to\infty}\lambda_i(a_k)=\lambda_i(a) \qquad i\in\N, \tag{$\ast$}
\ee
but this is not good. For example if $\lambda_i(a_k)= \{-1, 1-\frac{1}{k}, \frac{1}{2}, \frac{1}{3}, \frac{1}{4}, \dots\}$ and $\lambda_i(a)=\{1,-1,\frac{1}{2}, \frac{1}{3}, \frac{1}{4},\dots \}$, then $a_k \to a$ in eigenvalues, but the convergence does not hold in the sense of $(\ast)$. 
\end{remark}

Note that the set of eigenvalue sequences 
\be
\{\{\lambda_i\}_{i\geq1} \subset \R: \lim_{i\to\infty}\lambda_i=0 \}
\ee
 is metrizable by 
\be\label{metric}
d(\{\lambda_i\}_{i\geq1}, \{\mu_i\}_{i\geq1}) := \sum_{i=1}^\infty \frac{1}{2^i} \frac{|\lambda_i^+ -\mu_i^+|}{1+|\lambda_i^+-\mu_i^+|} +\sum_{i=1}^\infty \frac{1}{2^i} \frac{|\lambda_i^- -\mu_i^-|}{1+|\lambda_i^--\mu_i^-|}, 
\ee
where infinitely many $0$'s are to be added in the end if the sequence $\lambda_i^\pm$ or $\mu_i^\pm$ ends at a finite $i$. This metric is compatible with the convergence in eigenvalues. 

\begin{proposition}\label{Convergence1}
  Let $a, a_k, k=1,2,3,\dots$ be selfadjoint compact operators on separable Hilbert spaces $H, H_k, k=1,2,3,\dots,$ respectively. Then the following are equivalent. 
\begin{enumerate}[\rm(1)]
\item\label{C10} $a_k$ converges to $a$ in eigenvalues (cf.\ Definition \ref{def:cv-in-eigenvalues}). 
\item\label{C20} $\lim_{k\to\infty}\Tr_{H_k}(f(a_k)) = \Tr_H(f(a))$ for any $f \in \CB$. 
\item\label{C30} $\lim_{k\to\infty}\Tr_{H_k}(f(a_k)) = \Tr_H(f(a))$ for any $f \in \CBI$. 
\end{enumerate}
\end{proposition}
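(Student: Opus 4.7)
I would prove (1) $\Rightarrow$ (2) $\Rightarrow$ (3) $\Rightarrow$ (1); the middle step is immediate from $\CBI \subset \CB$.

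For (1) $\Rightarrow$ (2), fix $f \in \CB$ vanishing on $(-\delta, \delta)$ for some $\delta > 0$. Compactness of $a$ yields only finitely many, say $N^\pm$, indices with $|\lambda_i^\pm(a)| \geq \delta$. Convergence in eigenvalues combined with the monotonicity $|\lambda_{j+1}^\pm(a_k)| \leq |\lambda_j^\pm(a_k)|$ forces $|\lambda_j^\pm(a_k)| < \delta$ eventually for every $j > N^\pm$, so $f(\lambda_j^\pm(a_k)) = 0$ for such $j$. The trace then reduces to a finite sum that converges termwise to $\Tr_H(f(a))$ by continuity of $f$.

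For (3) $\Rightarrow$ (1), I view $\EV(a), \EV(a_k)$ as positive integer-valued Radon measures $\nu_a, \nu_{a_k}$ on $\R \setminus \{0\}$, locally finite because the operators are compact. Hypothesis (3) provides vague convergence $\int g\, d\nu_{a_k} \to \int g\, d\nu_a$ for every $g \in \CBI$. I argue componentwise for $\lambda_j^+(a_k)$; the negative case is symmetric. List the distinct positive eigenvalues of $a$ as $\mu_1 > \mu_2 > \cdots$ with multiplicities $m_l$, and first suppose $\lambda_j^+(a) = \mu_L > 0$. Choose $\epsilon > 0$ small enough that the intervals $I_l := (\mu_l - \epsilon, \mu_l + \epsilon)$ for $l = 1, \ldots, L$ are disjoint subsets of $(0, \infty)$ containing no other eigenvalues of $a$. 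Squeezing $\mathbf{1}_{I_l}$ between smooth bumps $\varphi_l^{\text{in}} \leq \mathbf{1}_{I_l} \leq \varphi_l^{\text{out}}$ that equal $1$ at $\mu_l$ and whose supports exclude the other $\mu_m$'s, I get $\int \varphi_l^{\text{in}}\, d\nu_a = \int \varphi_l^{\text{out}}\, d\nu_a = m_l$, and hypothesis (3) yields $\nu_{a_k}(I_l) \to m_l$; since $\nu_{a_k}(I_l)$ is an integer, it equals $m_l$ for $k$ large. For the ``gaps'' $G_l := [\mu_{l+1} + \epsilon, \mu_l - \epsilon]$ and the unbounded tail $G_0 := [\mu_1 + \epsilon, \infty)$, I take smooth bumps $\psi_l \in \CBI$ supported in slight enlargements of $G_l$ disjoint from all $\mu_m$ and equal to $1$ on $G_l$; then $\int \psi_l\, d\nu_a = 0$, so $\nu_{a_k}(G_l) \leq \int \psi_l\, d\nu_{a_k} < 1$ eventually, hence $\nu_{a_k}(G_l) = 0$. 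These facts together force the properly arranged eigenvalues $\lambda_1^+(a_k), \ldots, \lambda_{m_1 + \cdots + m_L}^+(a_k)$ to fill $I_1, \ldots, I_L$ in the correct order, so $\lambda_j^+(a_k) \in I_L$; letting $\epsilon \to 0$ concludes this case. The residual case $\lambda_j^+(a) = 0$ (where $a$ has at most $j-1$ positive eigenvalues) follows by applying (3) to a single bump $g \in \CBI$ with $\mathbf{1}_{[\delta, \infty)} \leq g \leq \mathbf{1}_{[\delta/2, \infty)}$, which upper bounds $\limsup_k \nu_{a_k}([\delta, \infty))$ by $j-1$ and yields $\lambda_j^+(a_k) < \delta$ eventually.

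The main obstacle is the gap-and-tail argument in the principal case: even with correct counts of $a_k$'s eigenvalues inside each $I_l$, stray eigenvalues drifting into the gaps between the $\mu_l$'s or escaping into the unbounded positive tail would disrupt the proper ordering, so correctness of the count alone is insufficient. This is overcome by the auxiliary bumps $\psi_l$ whose integer-valued pushforwards $\nu_{a_k}(G_l)$ collapse from ``strictly less than $1$'' to ``exactly zero''; the unbounded tail bump $\psi_0$ requires some care since it is bounded but not compactly supported, but it still lies in $\CBI$ and the same argument applies.
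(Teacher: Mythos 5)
The implications \eqref{C10}$\Rightarrow$\eqref{C20}$\Rightarrow$\eqref{C30} are argued as in the paper. For \eqref{C30}$\Rightarrow$\eqref{C10} you take a genuinely different route: you recast $\EV(a_k)$ as integer-valued Radon measures $\nu_{a_k}$ on $\R\setminus\{0\}$ and use the hypothesis, which is exactly vague convergence against $\CBI$ test functions, to establish two things --- that the number of eigenvalues of $a_k$ in each small window $I_l$ about a distinct positive eigenvalue $\mu_l$ of $a$ converges to the multiplicity $m_l$, and that the counts in the gaps $G_l$ (including the unbounded tail $G_0$) converge to zero. Integer-valuedness then upgrades both to exact equalities for $k$ large, and the natural ordering of the windows $I_1 > I_2 > \cdots > I_L$ forces $\lambda_j^+(a_k)$ into $I_L$, giving convergence after $\varepsilon\to 0$; the case $\lambda_j^+(a)=0$ is handled by a single tail bump. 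The paper instead first proves a uniform operator-norm bound on the $a_k$ with a bump supported outside $[-2\|a\|,2\|a\|]$, then, for each index $i$, argues by contradiction: if $\lambda_i^+(a_k)$ fails to converge it has a subsequential limit $\mu_i^+\ne\lambda_i^+$ in a compact interval, and smooth step functions separating the two values give trace identities that conflict; an induction on $i$ finishes. Your counting argument sidesteps the boundedness lemma and the subsequence extraction (the unbounded tail bump $\psi_0$ plays the role of both) and handles multiplicities transparently, while the paper's per-index contradiction is more compressed on the page. Both proofs are correct.
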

\begin{proof} 
 Let $\lambda_i := \lambda_i(a)$ and $\lambda_i(k):=\lambda_i(a_k)$ for simplicity. Recall that we arrange the 
 eigenvalues in the way $\lambda_1^+ \geq \lambda_2^+ \geq \cdots$ and $\lambda_1^- \leq \lambda_2^- \leq \cdots$, 
 and similarly for $\{\lambda_i^\pm(k)\}_{i\geq1}$. 

\eqref{C10}$\Rightarrow$\eqref{C20}.  Take $f \in\CB$ then $f\equiv 0$ on $(-\delta,\delta)$ for some $\delta>0$. 
Only finitely many eigenvalues of $a$ are contained in $(-\delta,\delta)^c$, say $\lambda_i^+, i=1,2,\dots, \ell$ 
and $\lambda_i^-, i=1,2,\dots, m$. Convergence in eigenvalues implies that there exists $k_0$ such that 
$0\leq\lambda_{\ell+1}^+(k) < \delta$ for all $ k \geq k_0$. This implies that $0\leq\lambda_{i}^+(k) < \delta$ for all 
$ k \geq k_0$ and $i \geq \ell+1$. Similar facts hold for negative eigenvalues. Therefore, for sufficiently large $k$ we have 
\be
\begin{split}
\Tr_{H_k}(f(a_k))&=\sum_{i=1}^\ell f(\lambda_i^+(k)) + \sum_{i=1}^m f(\lambda_i^-(k))\\
& \overset{k\to\infty}{\longrightarrow} \sum_{i=1}^\ell f(\lambda_i^+)+  \sum_{i=1}^m f(\lambda_i^-) = \Tr_H(f(a)). 
\end{split}
\ee

\eqref{C20}$\Rightarrow$\eqref{C30}: Obvious. 

\eqref{C30}$\Rightarrow$\eqref{C10}. If we take $f \in \CBI$ such that $f\geq0$ on $\R$, $f (x)=0$ for $|x| \leq \|a\|$ and 
$f(x)=1$ for $|x| > 2 \|a\|$, then $\Tr_H(f(a))=0$. So for sufficiently large $k$ we have $\Tr_H(f(a_k)) <1$ and hence 
$a_k$ has no eigenvalues in $\{x \in \R: |x| >2 \|a\|\}$.  
This implies that the eigenvalues of $\{a_k: k \in \N\}$ are uniformly bounded, i.e.\ they are contained in a common 
interval $[-\alpha,\alpha]$. 

If $\lambda_1^+(k)$ does not converge to $\lambda_1^+$ then there exists a subsequence $(k_j)_{j\geq1}$, a real 
number $\mu_1^+ \in[0,\alpha]$ such that $\mu_1^+ \neq \lambda_1^+$ and $\lambda_1^+(k_j) \to \mu_1^+$ as $j\to\infty$. 
We derive a contradiction below. Let $\ep_1:= |\lambda_1^+ - \mu_1^+| >0$. 
\begin{itemize}
\item Case $0\leq \lambda_1^+< \mu_1^+$. We take a nonnegative function $f \in \CBI$ such that $f(x)= 1$ for 
$x\geq \mu_1^+ -\ep_1/4$ and $f(x)=0$ for $x\leq \lambda_1^++ \ep_1/4$. Then all eigenvalues of $a$ lie outside of
the support of $f$. Therefore $\Tr_{H_{k_j}}(f(a_{k_j})) \geq f(\lambda_1^+(k_j)) =1$ for large $j$, but $\Tr_H(f(a))=0$, a contradiction. 

\item Case $0\leq \mu_1^+< \lambda_1^+$. We take a nonnegative function $g \in \CBI$ such that $g(x)= 1$ for 
$x\geq \lambda_1^+-\ep_1/4$ and $g(x)=0$ for $x\leq \mu_1^+ + \ep_1/4$. Then $\Tr_{H_{k_j}}(f(a_{k_j}))=0$ for large 
$j$, but $\Tr_H(f(a))\geq f(\lambda_1^+)=1$, a contradiction. 
\end{itemize}
Thus we conclude that $\lambda_1^+(k)$ converges to $\lambda_1^+$ as $k\to\infty$. Then we go to the induction: 
Suppose that $\lambda_i^+(k) \to \lambda_i^+$ as $k\to\infty$ for every $i=1,\dots, \ell-1$ and suppose 
that $\lambda_{\ell}^+(k)$ does not converge to $\lambda_\ell^+$. Then as before there exists a subsequence 
$(k_j)_{j\geq1}$, a real number $\mu_\ell^+ \in[0,\lambda_{\ell-1}^+]$ such that $\mu_\ell^+\neq \lambda_\ell^+$ 
and $\lambda_\ell^+(k_j) \to \mu_\ell^+$ as $j\to\infty$. Let $\ep:=|\lambda_\ell^+ - \mu_\ell^+|>0$. 

\begin{itemize}
\item Case $\lambda_\ell^+ < \mu_\ell^+$. We take a nonnegative function $f \in \CBI$ such that $f(x)= 1$ 
for $x\geq \mu_\ell^+ -\ep/4$ and $f(x)=0$ for $x\leq \lambda_\ell^+ + \ep/4$. Then $\Tr_{H_{k_j}}(f(a_{k_j}))\geq\ell$ for 
large $j$, but $\Tr_H(f(a))= \sum_{i=1}^{\ell-1} f(\lambda_\ell^+)=\ell-1$, a contradiction.

\item Case  $\mu_\ell^+< \lambda_\ell^+$. We take a nonnegative function $g \in \CBI$ such that $g(x)= 1$ 
for $x\geq \lambda_\ell^+-\ep/4$ and $g(x)=0$ for $x\leq \mu_\ell^+ + \ep/4$. Then $\Tr_{H_{k_j}}(f(a_{k_j}))=\ell-1$ 
for large $j$, but $\Tr_H(f(a))\geq \ell$, a contradiction. 
\end{itemize}
Thus $\lambda_\ell^+(k)$ converges to $\lambda_\ell^+$ as $k\to\infty$. By induction we conclude that $\lambda_i^+(k)$ 
converges to $\lambda_i^+$ as $k\to\infty$ for every $i \geq1$. 
Similarly we can prove the convergence of $\lambda_i^-(k)$ to $\lambda_i^-$. 
\end{proof}

We define a notion of the distributional convergence of a tuple of compact operators, which is motivated from 
Definition \ref{def:cv-in-eigenvalues} and Proposition \ref{Convergence1}. 
\begin{definition}\label{def:cv-compact}
Given selfadjoint compact operators $a_i, a_i(n), i=1,\dots,k$ on separable Hilbert spaces $H, H_n$ respectively, we 
say that $(a_1(n) , \dots, a_k(n))$ {\it converges in compact distribution} to $(a_1,\dots, a_k)$ with respect to $\Tr_{H_n}, \Tr_H$ 
as $n\to\infty$ if for every function $f_i \in \CB, i=1,\dots, k$,  $p \in \N$ and tuple $(i_1,\dots, i_p) \in \{1,\dots, k\}^p$ we have that 
\be
\lim_{n\to\infty}\Tr_{H_n}(f_{i_1}(a_{i_1}(n))  \cdots f_{i_p}(a_{i_p}(n))) =\Tr_{H}(f_{i_1}(a_{i_1})  \cdots f_{i_p}(a_{i_p})). 
\ee
\end{definition}

\subsection{Moment method for convergence of eigenvalues}
In noncommutative probability, the moment method is an important tool to prove weak convergence of probability measures. 
Here, we show a counterpart for eigenvalues.

\begin{lemma}\label{approx} Let $\alpha >\delta >0$ and let $p \in \N$. If $f \in C^p(\R)$ and $f\equiv0$ on $[-\delta,\delta]$, 
then for any $\ep>0$, 
there exists a polynomial $P$ such that $0=P(0)=P'(0)=\cdots = P^{(p-1)}(0)$,  $\| f-P \|_{[-\alpha,\alpha]}<\ep$ and $|P(x)| \leq \ep |x|^p$ for $x\in[-\delta,\delta]$.  
\end{lemma}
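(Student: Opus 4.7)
The plan is to reduce the problem to the Weierstrass approximation theorem by extracting the factor $x^p$ from $f$. Since $f$ vanishes identically on $[-\delta,\delta]$, I would define
\begin{equation*}
g(x) := \begin{cases} f(x)/x^p, & |x| > \delta, \\ 0, & |x| \leq \delta, \end{cases}
\end{equation*}
and first check that $g$ is continuous on $[-\alpha,\alpha]$. The only nontrivial points are $x=\pm\delta$, where continuity holds because $f(\pm\delta)=0$ forces $f(x)/x^p\to 0$ as $|x|\downarrow\delta$. Then, using Weierstrass, I would choose a polynomial $Q$ with $\|g-Q\|_{[-\alpha,\alpha]} < \eta$ for a threshold $\eta$ to be determined, and set $P(x) := x^p Q(x)$.

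The verification would then go as follows. The vanishing $P(0)=P'(0)=\cdots=P^{(p-1)}(0)=0$ is automatic from the factor $x^p$. For the uniform approximation on $[-\alpha,\alpha]$, I would write
\begin{equation*}
|f(x) - P(x)| = |x|^p \,|g(x) - Q(x)| \leq \alpha^p \,\eta,
\end{equation*}
while for the local bound near zero I would use that $g\equiv 0$ on $[-\delta,\delta]$, so $|Q(x)| < \eta$ there, giving $|P(x)| \leq \eta\, |x|^p$ on $[-\delta,\delta]$. Choosing any $\eta$ with $\eta \leq \min(\ep/\alpha^p,\,\ep)$ (say $\eta = \ep/(1+\alpha^p)$) delivers both estimates simultaneously.

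I do not expect a genuine obstacle here: the ansatz $P(x) = x^p Q(x)$ is perfectly matched to the lemma, converting one Weierstrass approximation of $g$ into both of the required inequalities at once, and making the derivative conditions at the origin automatic. The only step that really uses the hypothesis is the continuity of $g$ at the transition points $\pm\delta$, which relies on $f$ vanishing throughout $[-\delta,\delta]$ rather than merely at the origin. Note in passing that continuity of $f$ alone would suffice for this argument; the stated $C^p$ regularity is stronger than needed for the lemma itself and is presumably imposed for compatibility with the downstream use.
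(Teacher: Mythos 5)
Your proof is correct and takes a genuinely different route from the paper. The paper proves the lemma (illustrated for $p=2$) by approximating the $p$-th derivative $f^{(p)}$ with a polynomial $R$ via Weierstrass, adjusting so $R$ vanishes at $0$, and then integrating $p$ times: setting $Q(x)=\int_0^x R$, $P(x)=\int_0^x Q$, and so on. The vanishing of the first $p$ derivatives of $P$ at $0$ and the bound $|P(x)|\le\ep|x|^p$ on $[-\delta,\delta]$ are then extracted from the iterated integrals and from $f^{(p)}\equiv 0$ on $[-\delta,\delta]$. Your argument instead factors out $x^p$ directly: you form the continuous quotient $g=f/x^p$ (extended by $0$ on $[-\delta,\delta]$, which is continuous precisely because $f$ vanishes up to $\pm\delta$), approximate $g$ once by Weierstrass, and multiply back by $x^p$. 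This is more economical — one Weierstrass application rather than $p$ levels of integration — and it makes both the derivative-vanishing at $0$ and the local bound $|P(x)|\le\eta|x|^p$ on $[-\delta,\delta]$ fall out of the single estimate $\|g-Q\|_{[-\alpha,\alpha]}<\eta$. You are also right that your version only needs $f$ continuous, whereas the paper's integration scheme uses the $C^p$ hypothesis in an essential way; the stronger regularity hypothesis in the statement is indeed harmless but not needed for this lemma.
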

\begin{proof}
Without loss of generality we may assume that $\alpha=1$. We only consider $p=2$ for simplicity; the general case is similar. 
By Weierstrass' approximation we can find a polynomial $R_0(x)$ such that $\|f''- R_0\|_{[-\alpha,\alpha]}<\ep/8$. This 
implies that $|R_0(0)| <\ep/8$, so the polynomial $R(x):=R_0(x)-R_0(0)$ satisfies that $\|f''- R_0\|_{[-\alpha,\alpha]}<\ep/4$. 
Then we define $Q(x):=  \int_0^x R(y)\,\d y$. Note that $Q(x)$ does not have a constant term. For $x \in [-1,1]$ one has that 
$|f'(x)-Q(x)| = |\int_0^x f''(y)\, \d y - \int_0^x R(y)\,\d y|  \leq \int_{-1}^1 | f''(y)- R(y)|\,\d y \leq  \ep/2$. Then we define 
$P(x):=   \int_0^x Q(y)\,\d y$, which satisfies $P(0)=P'(0)=0$. Then for any $x\in[-1,1]$ one has that  
$|f(x)-P(x)| = |\int_0^x f'(y)\, \d y - \int_0^x Q(y)\,\d y|  \leq \int_{-1}^1 | f'(y)- Q(y)|\,\d y \leq \ep$. 

For $x \in [-\delta,\delta]$, we have
\be
\begin{split}
|P(x)| &=\left| \int_0^x \d y \int_0^y R(z)\, \d z \right| \leq \int_{|y|\leq|x|} \d y \int_{|z|\leq |x|} |R(z)|\, \d z\\
& =2|x|  \int_{|z|\leq |x|} |R(z)|\, \d z \leq \ep x^2.  
\end{split}
\ee 
On the last line we have used the fact that $f''(z)= 0$ for $z\in[-\delta,\delta]$ and $\|f''-R\|_{[-\alpha,\alpha]} \leq \ep/4$. 
\end{proof}

\begin{proposition}\label{prop2.6} 
Let $a, a_k, k=1,2,3,\dots$ be selfadjoint operators on separable Hilbert spaces $H, H_k, k=1,2,3,\dots,$ respectively. 
Suppose that there exists $p \in \N$ such that 
$a \in \cS^p(H)$ and $a_k \in \cS^p(H_k)$ for all $k\in\N$. Suppose that $\Tr_{H_k}(a_k^n)\to \Tr_H(a^n)$ as $k\to\infty$ 
for any integer $n \geq p$. Then $a_k$ converges to $a$ in eigenvalues. 
\end{proposition}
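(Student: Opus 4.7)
The plan is to reduce Proposition \ref{prop2.6} to Proposition \ref{Convergence1} by establishing that $\Tr_{H_k}(f(a_k)) \to \Tr_H(f(a))$ for every $f \in \CBI$. Such an $f$ vanishes on some interval $[-\delta,\delta]$, so I would approximate it by polynomials that vanish to high order at the origin, using Lemma \ref{approx}, and then separately control the contribution of the ``large'' and ``small'' eigenvalues of $a_k$.

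The first step is to secure a uniform bound on the spectra. Let $q$ be the smallest even integer with $q \geq p$; since $\cS^p \subseteq \cS^q$ and the moment hypothesis remains valid for $n \geq q$, nothing is lost by working at this exponent. For even $q$ one has $\Tr_{H_k}(a_k^q) = \sum_i \lambda_i(a_k)^q = \|a_k\|_q^q$, so the convergence $\Tr_{H_k}(a_k^q) \to \Tr_H(a^q) = \|a\|_q^q$ yields a uniform bound $\|a_k\|_q \leq C$ for some $C < \infty$. Since $\|\cdot\|_\infty \leq \|\cdot\|_q$, every eigenvalue of $a$ and of every $a_k$ lies in $[-C,C]$.

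Given $\ep>0$, I would apply Lemma \ref{approx} with $\alpha = C+1$ and order $q$ to obtain a polynomial $P(x) = \sum_{n=q}^{N} c_n x^n$ with $\|f-P\|_{[-\alpha,\alpha]} < \ep$ and $|P(x)| \leq \ep|x|^q$ on $[-\delta,\delta]$. Because every monomial of $P$ has degree $\geq q \geq p$, the moment hypothesis combined with linearity gives $\Tr_{H_k}(P(a_k)) \to \Tr_H(P(a))$ as $k\to\infty$. To estimate $|\Tr_{H_k}(f(a_k)) - \Tr_{H_k}(P(a_k))|$ I would split the spectrum of $a_k$: eigenvalues with $|\lambda_i(a_k)| > \delta$ number at most $C^q/\delta^q$ by the Schatten bound, each contributing at most $\ep$ (they lie in $[-\alpha,\alpha]$); eigenvalues in $[-\delta,\delta]$ satisfy $f(\lambda_i(a_k))=0$ and $|P(\lambda_i(a_k))| \leq \ep|\lambda_i(a_k)|^q$, summing to at most $\ep\|a_k\|_q^q \leq \ep C^q$. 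The same estimate holds for $a$, and an $\ep/3$ argument combined with the polynomial convergence then gives $\Tr_{H_k}(f(a_k)) \to \Tr_H(f(a))$; Proposition \ref{Convergence1} finishes the proof.

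The only genuinely delicate point is the uniform Schatten bound, which is not explicitly in the hypotheses: the trick is to raise the exponent to an even integer $q$ so that $\Tr_{H_k}(a_k^q)$ coincides with $\|a_k\|_q^q$, allowing the moment convergence to control eigenvalue sums simultaneously. Everything else is a careful bookkeeping of Lemma \ref{approx} against this estimate.
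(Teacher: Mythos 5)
Your proposal is correct and follows essentially the same route as the paper's proof: reduce via Proposition \ref{Convergence1} to test functions vanishing near $0$, pass to an even exponent $q$, approximate by a polynomial vanishing to order $q$ at $0$ via Lemma \ref{approx}, and split the spectrum at $\delta$ using the Chebyshev/Schatten counting bound so that the tail contribution is controlled uniformly in $k$ while the remaining moments converge by hypothesis. The only cosmetic difference is that you make explicit the justification for the uniform bound $\sup_k \|a_k\|_q < \infty$ (it follows because $\Tr_{H_k}(a_k^q)=\|a_k\|_q^q$ converges for even $q$), which the paper asserts without comment; otherwise the bookkeeping constants differ but the mechanism is identical.
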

\begin{proof} 
We may assume that $p$ is an even integer since $\cS^p \subset \cS^{p+1}$. 
By Proposition \ref{Convergence1}, it suffices to show that for any bounded $C^p(\R)$ function $f$ such that $f \equiv 0$ on $[-\delta,\delta]$ for some $\delta>0$, it holds that $\Tr_{H_k}(f(a_k))\to \Tr_H(f(a))$ $(k\to\infty)$.

If the dimensions of some of $H, H_k$ are finite, then the following proof is still available by adding infinitely many 0's to the eigenvalues, so let us assume that the dimensions of Hilbert spaces are all infinite. 
Let $\{\lambda_i\}_{i=1}^\infty, \{\lambda_i(k)\}_{i=1}^\infty$ be the properly arranged eigenvalues of $a, a_k$, respectively.

Let $\alpha:= \sup_{k\geq1} \|a_k\|_p<\infty$ and suppose that $\alpha>0$; otherwise $a, a_k$ are all zero elements. 
It is easy to see by Chebyshev's inequality  that 
\be\label{Chebyshev}
\#\{i \in\N: |\lambda_i(k)|>\delta\} =\#\{i \in\N: \lambda_i(k)^p >\delta^p\} \leq \frac{\Tr(a_k^p)}{\delta^p} \leq \frac{\alpha^p}{\delta^p}, 
\ee 
and similarly, $\#\{i \in\N: |\lambda_i|>\delta\} \leq \frac{\alpha^p}{\delta^p}.$ 
Therefore if we define $i_0 := [\frac{\alpha^p}{\delta^p}]\in\N$ then $|\lambda_i(k)|, |\lambda_i| \leq\delta$ for all $k\in\N$ and $i > i_0$.

Since $\|a\|_p^p, \|a_k\|_p^p \leq \alpha^p$, the eigenvalues of $a, a_k$ are all contained in the interval $[-\alpha, \alpha]$. Given $\ep>0$, 
by Lemma \ref{approx} we can find a polynomial $P$ of the form $P(x) = a_p x^p + a_{p+1} x^{p+1}+ \cdots + a_q x^q$ such that $\| f-P \|_{[-\alpha, \alpha]}<\ep/i_0$ and $|P(x)| \leq \ep \alpha^{-p} x^p$ for $x\in[-\delta,\delta]$. Then for all $k\in\N$ we have that 
\be\label{eqA}
\begin{split}
\left|\sum_{i=1}^\infty f(\lambda_i(k)) - \sum_{i=1}^\infty f(\lambda_i)\right| 
&= \left|\sum_{i=1}^{i_0} f(\lambda_i(k)) - \sum_{i=1}^{i_0} f(\lambda_i)\right| \\
&\leq  \left|\sum_{i=1}^{i_0} f(\lambda_i(k)) - \sum_{i=1}^{i_0} P(\lambda_i(k))\right| + \left|\sum_{i=1}^{i_0} P(\lambda_i(k)) - \sum_{i=1}^{i_0} P(\lambda_i)\right| \\
&\qquad + \left|\sum_{i=1}^{i_0} P(\lambda_i) - \sum_{i=1}^{i_0} f(\lambda_i)\right|  \\
&\leq 2 \ep + \left|\sum_{i=1}^{i_0} P(\lambda_i(k)) - \sum_{i=1}^{i_0} P(\lambda_i)\right|. 
\end{split}
\ee
We note here that 
\be
\sum_{i=i_0+1}^\infty |P(\lambda_i(k))| \leq \ep \alpha^{-p} \sum_{i=i_0+1}^\infty |\lambda_i(k)|^p \leq \ep \alpha^{-p} \|a_k\|_p^p \leq \ep, 
\ee
and similarly, $\sum_{i=i_0+1}^\infty |P(\lambda_i)| \leq \ep$. Therefore, 
\be\label{eqB}
 \left|\sum_{i=1}^{i_0} P(\lambda_i(k)) - \sum_{i=1}^{i_0} P(\lambda_i)\right| \leq 2\ep +  \left|\sum_{i=1}^{\infty} P(\lambda_i(k)) - \sum_{i=1}^{\infty} P(\lambda_i)\right|. 
\ee
Since $\Tr_{H_k}(a_k^n)=\sum_{i=1}^\infty \lambda_i(k)^p \to \sum_{i=1}^\infty \lambda_i^p=\Tr_H(a^n)$ for all $n\geq p$ and the coefficients of $1,x,\dots, x^{p-1}$ of $P$ are zero, there exists $k_0\in\N$ such that 
\be\label{eqC}
\left|\sum_{i=1}^{\infty} P(\lambda_i(k)) - \sum_{i=1}^{\infty} P(\lambda_i)\right| \leq \ep
\ee 
for $k\geq k_0$. From \eqref{eqA},\eqref{eqB},\eqref{eqC} we have that 
\be
\left|\sum_{i=1}^\infty f(\lambda_i(k)) - \sum_{i=1}^\infty f(\lambda_i)\right|  \leq 5 \ep,\qquad k \geq k_0, 
\ee
the conclusion. 
\end{proof} 
\begin{corollary}\label{cor unique} Let $a,b$ be selfadjoint operators on separable Hilbert spaces $H, K$, respectively, such that $a\in \cS^p(H)$ and $b \in \cS^p(K)$ for some $p \in \N$. If $\Tr_H(a^n) = \Tr_K(b^n)$ for every integer $n\geq p$ then $\EV(a)=\EV(b)$. 
\end{corollary}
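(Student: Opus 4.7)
The plan is to deduce this as an essentially immediate specialization of Proposition \ref{prop2.6} to a constant sequence. Concretely, I would set $H_k := K$ and $a_k := b$ for every $k \in \N$. The hypothesis $b \in \cS^p(K)$ gives $a_k \in \cS^p(H_k)$, and the assumed identity $\Tr_K(b^n) = \Tr_H(a^n)$ for every integer $n \geq p$ provides the required convergence $\lim_{k\to\infty}\Tr_{H_k}(a_k^n) = \Tr_H(a^n)$ trivially (each term of the sequence already equals the limit). Proposition \ref{prop2.6} then asserts that $a_k$ converges to $a$ in eigenvalues, which for a constant sequence $a_k = b$ says exactly $\lambda_i^{+}(b) = \lambda_i^{+}(a)$ and $\lambda_i^{-}(b) = \lambda_i^{-}(a)$ for every $i \in \N$, i.e., $\EV(a) = \EV(b)$ by Definition \ref{def:cv-in-eigenvalues}.

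There is no real obstacle here; the only thing worth flagging is that one must work with the properly-arranged nonnegative and nonpositive parts separately, as emphasized in the discussion following Definition \ref{def:cv-in-eigenvalues}, rather than with the single sequence $\{\lambda_i\}$, since proper arrangement of the full sequence is not unique. Once this is noted, the conclusion $\EV(a)=\EV(b)$ as multisets is literally the same as componentwise equality of the $\lambda_i^\pm$. Alternatively, if one preferred a self-contained argument not invoking Proposition \ref{prop2.6}, one could just replay its proof in this constant setting: use the Chebyshev bound \eqref{Chebyshev} to show all but finitely many eigenvalues lie in any given $[-\delta,\delta]$, and then apply Lemma \ref{approx} to approximate arbitrary $f \in \CBI$ vanishing near $0$ by polynomials of degree $\geq p$ without constant or low-order terms, reducing to the identity of moments $\Tr(a^n)=\Tr(b^n)$ for $n\geq p$ and concluding via Proposition \ref{Convergence1}.
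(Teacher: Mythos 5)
Your proposal is exactly the intended argument: the paper states Corollary \ref{cor unique} without a proof precisely because it is the specialization of Proposition \ref{prop2.6} to the constant sequence $a_k = b$, and your remark about working with the properly arranged $\lambda_i^\pm$ separately is the right thing to keep in mind. Nothing to add.
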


Actually in Proposition \ref{prop2.6}, we need not a priori assume the existence of a limiting operator $a$.

\begin{proposition}\label{prop2.7} Let $p \in 2\N$ and $H_k, k\in\N$ be separable Hilbert spaces. Let $a_k \in \cS^{p}(H_k), k=1,2,3,\dots$ be selfadjoint.  
Suppose that the limit $\alpha_n:=\lim_{k\to\infty}\Tr_{H_k}(a_k^n) \in\R$ exists at least for all integers $n\geq p$. Then there exist a separable Hilbert space $H$ and a selfadjoint operator $a \in \cS^{p}(H)$ such that $a_k$ converges to $a$ in eigenvalues, and moreover, $\alpha_n=\Tr_{H}(a^n)$ for at least $n\geq p+1$.  
\end{proposition}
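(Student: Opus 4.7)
The plan is to extract a candidate limit via a diagonal argument on eigenvalues, identify it using the moment hypothesis applied at exponents $n \geq p+1$, and then upgrade subsequential convergence to convergence of the whole sequence via Corollary \ref{cor unique}.

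Since $p$ is even, $\Tr_{H_k}(a_k^p) = \|a_k\|_p^p$, so the hypothesis $\Tr_{H_k}(a_k^p) \to \alpha_p$ yields a uniform Schatten bound $M := \sup_k \|a_k\|_p < \infty$. By Chebyshev as in \eqref{Chebyshev}, for every $\delta > 0$ the number of eigenvalues of any $a_k$ exceeding $\delta$ in modulus is at most $M^p/\delta^p$, uniformly in $k$; in particular every sequence $\{\lambda_i^\pm(k)\}_{k \geq 1}$ is bounded, and $\lambda_i^\pm(k) \to 0$ as $i \to \infty$ uniformly in $k$. I would then apply a standard diagonal extraction to obtain a subsequence $(k_j)$ along which $\lambda_i^\pm(k_j) \to \mu_i^\pm$ for every $i \in \N$, and invoke Fatou on $\sum_i |\lambda_i(k_j)|^p \leq M^p$ to conclude $\sum_i (|\mu_i^+|^p + |\mu_i^-|^p) < \infty$. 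Taking $a$ to be a diagonal operator on a separable Hilbert space $H$ with eigenvalue multiset $\{\mu_i^+\}_{i \geq 1} \sqcup \{\mu_i^-\}_{i \geq 1}$ produces a selfadjoint element of $\cS^p(H)$ by construction.

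Next I would verify that $\Tr_H(a^n) = \alpha_n$ for every $n \geq p+1$. Fix such $n$ and $\delta > 0$, and set $i_0 := [M^p/\delta^p]$; all eigenvalues of $a_{k_j}$ and of $a$ beyond the first $i_0$ positive and $i_0$ negative ones are at most $\delta$ in absolute value, so each of their $n$-th power tails is bounded by $\delta^{n-p} M^p$. The $2 i_0$ head terms $\lambda_i^\pm(k_j)^n$ converge termwise to $(\mu_i^\pm)^n$ as $j \to \infty$, and a standard $\ep$-pushing argument gives $\limsup_j |\Tr_{H_{k_j}}(a_{k_j}^n) - \Tr_H(a^n)| \leq 2 \delta^{n-p} M^p$. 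Letting $\delta \to 0$ together with $\Tr_{H_{k_j}}(a_{k_j}^n) \to \alpha_n$ yields the desired identity. The argument breaks down precisely at $n = p$, since then $\delta^{n-p} = 1$ does not shrink, which is exactly why the statement only asserts the moment identity for $n \geq p+1$.

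Finally, Proposition \ref{prop2.6} applied at exponent $p+1$ (using $\cS^p \subseteq \cS^{p+1}$, since the eigenvalues tend to zero) gives $a_{k_j} \to a$ in eigenvalues along the extracted subsequence. To upgrade to the full sequence I would run the same extraction on an arbitrary subsequence of $(a_k)$: any sub-subsequential limit $a'$ must still satisfy $\Tr(a'^n) = \alpha_n$ for all $n \geq p+1$, so Corollary \ref{cor unique} with $p+1$ in place of $p$ forces $\EV(a') = \EV(a)$. By the standard subsequence principle in the metric \eqref{metric}, the full sequence $(a_k)$ converges to $a$ in eigenvalues. The main obstacle is the asymmetry at the critical exponent $n = p$: $\ell^p$ mass can leak across the diagonal extraction, so both the moment identification and the subsequence-uniqueness step have to be carried out one power above the Schatten class that controls the uniform bound.
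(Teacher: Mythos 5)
Your proof is correct and follows essentially the same route as the paper's: a diagonal extraction on eigenvalues, Fatou's lemma to place the candidate limit in $\cS^p$, the Chebyshev tail estimate with the $\delta^{n-p}M^p$ bound (which is exactly why the argument fails at $n=p$ and succeeds at $n\geq p+1$), then Corollary~\ref{cor unique} at exponent $p+1$ to pin down the subsequential limit, and metrizability of eigenvalue sequences to upgrade to full convergence. The only cosmetic differences are that you extract on the signed parts $\lambda_i^\pm$ rather than on the properly arranged $\lambda_i$, and you invoke Proposition~\ref{prop2.6} where it is not actually needed (the termwise convergence produced by the extraction already \emph{is} convergence in eigenvalues by Definition~\ref{def:cv-in-eigenvalues}).
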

\begin{remark}
\begin{enumerate}[\rm(1)]
\item It is useless to assume that $p \in 2 \N -1$, because then the conclusion will only be $a\in \cS^{p+1}(H)$. 

\item We cannot conclude that $\alpha_p=\Tr_H(a^p)$. For example we may take as eigenvalues of $a_k$
\[
\lambda_i(k) = 
\begin{cases}
\frac{1}{i}, & 1 \leq i \leq k,\\
\frac{1}{k}, & k+1 \leq i \leq k+k^2, \\   
\frac{1}{i-k^2}, & i \geq  k +k^2+1. 
\end{cases}
\]
Then $\lambda_i:=\lim_{k\to\infty} \lambda_i(k)=\frac{1}{i}$ for all $i\in\N$, but $\sum_{i=1}^\infty \lambda_i(k)^2 = \zeta(2)+ 1 \neq \sum_{i=1}^\infty \lambda_i^2$. 
\end{enumerate}
\end{remark}
\begin{proof} Let $\{\lambda_i(k)\}_{i\geq1}$ be the properly arranged eigenvalues of $a_k$, and let 
$\alpha:= \sup_{k \in\N} \|a_k\|_{p}$.  
Since $ |\lambda_i(k)|$ are uniformly bounded by $\alpha$, there exists a subsequence $\{k(1,j)\}_{j\geq1}$ of $\N$ such that $\lambda_1(k(1,j))$ converges to some $\lambda_1$. Then we can take a subsequence $\{k(2,j)\}_{j\geq1}$ of $\{k(1,j)\}_{j\geq1}$ such that $\lambda_2(k(2,j))$ converges to some $\lambda_2$, and continue this procedure. 
Let $k_m := k(m,m)$ and $\lambda_i(m):= \lambda_i(k_m)$. By construction $\{k_m\}_{m \geq i} \subset \{k(i, j)\}_{j\geq1}$ for each $i \in \N$. Therefore $\lim_{m\to\infty} \lambda_i(m)=\lambda_i$ for every $i\in\N$. Then $\{\lambda_i\}_{i\geq1}$ is properly arranged since so is $\{\lambda_i(k)\}_{i\geq1}$.  
By Fatou's lemma 
\be
\sum_{i=1}^\infty \lambda_i^{p} \leq \varliminf_{m\to\infty} \sum_{i=1}^\infty \lambda_i(m)^{p} \leq \alpha^{p}.   
\ee
Therefore, there exists a separable Hilbert space $H$ and a selfadjoint $a \in \cS^{p}(H)$ such that $\EV(a)=\{\lambda_i\}_{i\geq1}$. 
Note that $\beta_n:= \sum_{i=1}^\infty \lambda_i^n$ is absolutely convergent for $n\geq p$ since $\cS^n(H)$ is increasing on $n\geq1$. 

We want to show that $\beta_n=\alpha_n$ for $n \geq p+1$. For any $\ep>0$, by Chebyshev's inequality \eqref{Chebyshev} 
(now we use $\ep>0$ instead of $\delta>0$), there exists $i_0\in\N$ such that $|\lambda_{i_0+1}(k)| \leq \ep$ for all $k \in \N$, 
and hence $|\lambda_{i_0+1}| \leq \ep$. For $n \geq p +1$ we have 
\be
\begin{split}
|\alpha_n-\beta_n| 
&\leq |\alpha_n - \Tr_{H_k}(a_k^n)| + |\Tr_{H_k}(a_k^n) - \beta_n|  \\
& \leq |\alpha_n - \Tr_{H_k}(a_k^n)| + \left| \sum_{i=1}^{i_0}(\lambda_i(k)^n - \lambda_i^n) \right| + \sum_{i=i_0+1}^{\infty}(|\lambda_i(k)|^n + |\lambda_i|^n)\\
& \leq |\alpha_n - \Tr_{H_k}(a_k^n)| + \left| \sum_{i=1}^{i_0}(\lambda_i(k)^n - \lambda_i^n) \right| + \ep^{n-p}\left( \sum_{i=i_0+1}^{\infty}(\lambda_i(k)^{p} + \lambda_i^{p})\right). 
\end{split}
\ee
The first two terms converge to 0 if we put $k=k_m$ and let $m$ tend to infinity. The last term is bounded by 
$2\ep^{n-p}\alpha^{p}$, so we get $|\alpha_n-\beta_n|\leq 2 \alpha^p\ep^{n-p}$ and hence $\alpha_n=\beta_n$  for $n\geq p+1$. 
By Corollary \ref{cor unique} (applied to now $p+1$), the limit eigenvalues $\{\lambda_i\}_{i\geq1}$ do not depend on 
the choice of the subsequence $\{\lambda(k_m)\}_{m\geq1}$. Since the space of eigenvalue distributions is metrizable 
(see \eqref{metric}), $a_k$ converges in eigenvalues to $a$. 
\end{proof}

\section{Cyclic monotone independence}\label{sec:cyclic-monotone}
Our aim is to analyze asymptotic behavior of discrete eigenvalues of random matrices. For this we would like to abstract the notion of the non-normalized trace $\Tr_H$ on a Hilbert space $H$. 
So we define a non-commutative measure space that replaces a state in a non-commutative probability space with a {\it weight}. 

\subsection{Algebraic non-commutative measure space} \label{sec3.1}
A \emph{non-commutative measure space} is a pair $(\cA,\omega)$ where $\cA$ is a (unital or non-unital) $\ast$-algebra over $\C$. 
Let $\omega$ be a tracial weight meaning that 
\begin{itemize} 
\item $\omega$ is defined on a (possibly non-unital) $\ast$-subalgebra $\Dom(\omega)$ of $\cA$ and $\omega: \Dom(\omega)\to\C$ is linear,  
\item $\omega$ is positive, i.e.\ $\omega(a^* a) \geq0$ for every $a\in\Dom(\omega)$, 
\item $\omega(a^*)=\overline{\omega(a)}$ for all $a \in \Dom(\omega)$, 
\item $\omega(a b) = \omega(b a)$ for all $a,b \in\Dom(\omega)$. 
 \end{itemize}
Moreover, if $\cA$ is unital, $\Dom(\omega)=\cA$ and $\omega(1_\cA)=1$ then we call $(\cA,\omega)$ {\it a non-commutative probability space}.  

Let  $(\cA,\omega)$ be a non-commutative measure space and let $a_1,\ldots ,a_k \in\Dom(\omega)$. 
The {\it distribution} of $(a_1,\ldots ,a_k)$ is the family of {\it (mixed) moments} 
\be
\{\omega(a_{i_{1}}^{\ep_1}\dots a_{i_{p}}^{\ep_p}): p\geq1, 1 \leq i_{1},\dots,i_{p}\leq k, (\ep_1,\dots, \ep_p)\in\{1,\ast\}^p\}.
\ee 
Given non-commutative measure spaces $(\cA,\omega), (\cB,\xi)$ and elements $a_1, \dots, a_k\in\Dom(\omega)$, $b_1,\dots, b_k \in \Dom(\xi)$, we say that $(a_1,\dots, a_k)$ has {\it the same distribution} as $(b_1,\dots, b_k)$ if 
\be
 \omega(a_{i_1}^{\varepsilon_{1}}\cdots a_{i_p}^{\varepsilon_{p}})= \xi(b_{i_1}^{\varepsilon_{1}}\cdots b_{i_p}^{\varepsilon_{p}})
\ee
for any choice of $p\in\N$, $1\leq i_{1},\dots,i_{p}\leq k$ and $(\varepsilon_1,\dots, \varepsilon_p)\in\{1,\ast\}^p$. 

The distribution of $(a_1,\ldots ,a_k)$ is a \emph{trace class distribution} if there exist a separable Hilbert space 
$H$ and $x_1,\ldots ,x_k \in (\cS^1(H), \Tr_H)$ (trace class operators) such that the distribution of $(a_1,\ldots ,a_k)$ is 
the same as that of $(x_1,\dots, x_k)$. In this case we define the non-zero eigenvalues of a self-adjoint $\ast$-polynomial 
$P(a_1,\dots, a_k)$ to be the eigenvalues of $P(x_1,\dots, x_k)$.

Let $(\cA,\omega), \{(\cA_n,\omega_n)\}_{n\geq1}$ be non-commutative measure spaces and $a_1,\dots, a_k \in \Dom(\omega)$, $a_1(n), \dots, a_k(n)\in\Dom(\omega_n)$. We say that  $(a_1(n),\ldots ,a_k(n))$ \emph{converges in distribution} to $(a_1,\dots, a_k)$ if 
\be
\lim_{n\to\infty} \omega_{n}(a_{i_1}(n)^{\varepsilon_{1}}\cdots a_{i_p}(n)^{\varepsilon_{p}}) =\omega(a_{i_1}^{\varepsilon_{1}}\cdots a_{i_p}^{\varepsilon_{p}})
\ee
for any choice of $p\in\N$, $1\leq i_{1},\dots,i_{p}\leq k$ and $(\varepsilon_1,\dots, \varepsilon_p)\in\{1,\ast\}^p$. 
If the distributions of selfadjoint elements $a_n \in \Dom(\omega_n)$ and $a\in\Dom(\omega)$ are trace class then we can define the concept of {\it convergence in eigenvalues} in a natural way (cf.\ Definition \ref{def:cv-in-eigenvalues}), i.e.\ $\lambda_i^u(a_n) \to \lambda_i^u(a)$ for every $i\in\N$ and $u\in\{+,-\}$.

\begin{proposition} Let $\{(\cA_n, \omega_n)\}_{n\geq1}, (\cA,\omega)$ be non-commutative measure spaces. 
Suppose that $a_i(n) \in \Dom(\omega_n), a_i \in\Dom(\omega), i=1,2,\dots, k$ have trace class distributions and 
that $(a_1(n),\dots, a_k(n))$ converges in distribution to $(a_1,\dots, a_k)$. Then for any selfadjoint non-commutative 
$\ast$-polynomial $P$ without a constant term, $P(a_1(n), \dots, a_k(n))$ converges in eigenvalues to $P(a_1,\dots, a_k)$. 
\end{proposition}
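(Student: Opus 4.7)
The plan is to reduce the claim to the moment method captured by Proposition \ref{prop2.6}. First I would invoke the trace class distribution hypothesis to realize $(a_1,\dots,a_k)$ as trace class operators $(x_1,\dots,x_k)\in\cS^1(H)^k$ on a separable Hilbert space $H$, and each tuple $(a_1(n),\dots,a_k(n))$ as $(x_1(n),\dots,x_k(n))\in\cS^1(H_n)^k$, both with matching mixed $\ast$-moments. By the definition of eigenvalues in the trace class distribution setting, $\EV(P(a_1,\dots,a_k))=\EV(Y)$ with $Y:=P(x_1,\dots,x_k)$, and similarly $\EV(P(a_1(n),\dots,a_k(n)))=\EV(Y_n)$ with $Y_n:=P(x_1(n),\dots,x_k(n))$; hence it suffices to prove $Y_n\to Y$ in eigenvalues in the Hilbert space sense.

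Since $P$ is a selfadjoint $\ast$-polynomial without constant term, each of its monomials contains at least one factor $x_i^{\varepsilon}$ with $\varepsilon\in\{1,\ast\}$. Because $\cS^1$ is a two-sided $\ast$-ideal in the bounded operators and trace class operators are in particular bounded, it follows that $Y\in\cS^1(H)$ and $Y_n\in\cS^1(H_n)$ are selfadjoint trace class operators.

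For every fixed integer $m\geq 1$, the power $Y^m$ expands as a finite $\C$-linear combination of monomials in $\{x_i,x_i^\ast\}_{i=1}^k$, each of positive length. Therefore $\Tr_H(Y^m)$ is a finite $\C$-linear combination of the $\omega$-moments of $(a_1,\dots,a_k)$, using that $\omega$ agrees with $\Tr_H$ when applied to the trace class representatives by the equality of distributions, and $\Tr_{H_n}(Y_n^m)$ is the same linear combination of the corresponding $\omega_n$-moments. The hypothesis that $(a_1(n),\dots,a_k(n))$ converges in distribution to $(a_1,\dots,a_k)$ then yields $\Tr_{H_n}(Y_n^m)\to \Tr_H(Y^m)$ for every $m\geq 1$.

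Finally I would apply Proposition \ref{prop2.6} with $p=1$ to the selfadjoint trace class operators $Y,Y_n$ to conclude that $Y_n$ converges to $Y$ in eigenvalues, which by the reduction above is the desired statement. No serious obstacle appears here: the ideal property of $\cS^1$ absorbs every monomial of $P$ into trace class, and Proposition \ref{prop2.6} does the heavy lifting. The only subtlety worth stating carefully is that, because all elements have trace class distributions, the abstract weights $\omega$ and $\omega_n$ agree with the operator traces on the relevant polynomial expressions; this is built into the definition of a trace class distribution.
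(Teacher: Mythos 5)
Your proof is correct and takes the same route as the paper, whose own proof is literally the single line ``This follows from Proposition~\ref{prop2.6}.'' You have simply supplied the details the paper leaves implicit: realizing the elements as trace class operators, using the ideal property of $\cS^1$ and the absence of a constant term in $P$ to see that $Y,Y_n\in\cS^1$, translating convergence in distribution into convergence of $\Tr(Y_n^m)$ for all $m\geq1$, and invoking Proposition~\ref{prop2.6} with $p=1$.
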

\begin{proof}
This follows from Proposition \ref{prop2.6}. 
\end{proof}
\subsection{Cyclic monotone independence} We introduce a kind of independence in a non-commutative probability 
space with a tracial weight $(\cC,\omega,\tau)$. For $\ast$-subalgebras $\cA,\cB$ of $\cC$ such that 
$1_{\cC}\in \cB$, let $\ideal_\cB(\cA)$ be the ideal generated by $\cA$ over $\cB$. More precisely, 
\be
\ideal_\cB(\cA):= {\rm span}\{b_0 a_1 b_1 \cdots a_n b_n: n \in \N, a_1,\dots, a_n \in \cA, b_0,\dots, b_n\in\cB\}, 
\ee
which is a $*$-subalgebra of $\cC$ containing $\cA$. 

\begin{definition}\label{construction} Let $(\cC,\omega,\tau)$ be a non-commutative probability space with a tracial weight $\omega$. 
\begin{enumerate}[\rm(1)]
\item Let $\cA, \cB$ be $\ast$-subalgebras of $\cC$ such that $1_\cC \in \cB$. We say that the pair $(\cA,\cB)$  
is {\it cyclically monotonically independent} or more simply {\it cyclically monotone} with respect to 
$(\omega,\tau)$ if 
\begin{itemize}
\item $\ideal_\cB(\cA) \subset \Dom(\omega)$;  
\item for any $n \in \N, a_1,\dots, a_n \in \cA$ and any $b_1,\dots, b_n\in\cB$, we have that 
 \[
 \omega(a_1 b_1a_2b_2 \cdots a_n b_n) =\omega(a_1a_2\cdots a_n) \tau(b_1)\tau(b_2)\cdots \tau(b_n).    
 \]
\end{itemize}
\item Given $a_1,\dots, a_k \in \Dom(\omega)$ and $b_1,\dots, b_\ell \in \cC$, the pair 
$(\{a_1,\dots, a_k\},\{b_1,\dots, b_\ell\})$ is {\it cyclically monotone} if $(\alg\{a_1,\dots, a_k\}, \alg\{1_\cC,b_1,\dots, b_\ell\})$ 
is cyclically monotone.  Note that we {\it do not} assume that $\alg\{a_1,\dots, a_k\}$ contains the unit of $\cC$. 
\end{enumerate}
\end{definition}
\begin{remark} This definition is similar to monotone independence of Muraki \cite{Mur2001}, but the difference is 
 \[
 \omega(b_0 a_1 b_1a_2b_2 \cdots a_n b_n) =\omega(a_1a_2\cdots a_n) \tau(b_1)\tau(b_2)\cdots \tau(b_{n-1})\tau(b_0 b_n).     
 \]
The factor $\tau(b_0 b_n)$ is to be replaced by  $\tau(b_0)\tau(b_n)$ in the monotone case. 
\end{remark} 
\begin{definition}[Cyclic monotone product]  
Let $(\cA, \omega)$ be a non-commutative measure space and let $(\cB, \tau)$ be a non-commutative probability space. 
Let $\cA \ast \cB$ be the algebraic free product of $\cA$ and $\cB$,  $1_{\cB}$ being identified with the unit $\C1_{\cA\ast \cB} $, 
\[
\C1_{\cA\ast \cB} \oplus \cB^0\oplus \cA \oplus (\cA\otimes \cB^0) \oplus (\cB^0 \otimes \cA) \oplus  (\cA\otimes \cB^0 \otimes \cA) \oplus \cdots, 
\]  
where $\cB= \C1_{\cB} \oplus \cB^0$ is a direct sum decomposition as a vector space. 
We define a linear functional $\omega \tri\tau$ on an ideal of $\cA\ast\cB$ by 
 \begin{align}
& (\omega\tri\tau)(b_0 a_1 b_1a_2b_2 \cdots a_n b_n) :=\omega(a_1a_2\cdots a_n) \tau(b_1)\tau(b_2)\cdots \tau(b_{n-1})\tau(b_0 b_n),\\
&\Dom(\omega\tri\tau) := \Dom(\omega) \oplus (\Dom(\omega)\otimes \cB^0) \oplus (\cB^0 \otimes \Dom(\omega)) \oplus  (\Dom(\omega)\otimes \cB^0 \otimes \Dom(\omega)) \oplus \cdots 
\end{align}
for  $n\in\N, a_1,\dots, a_n \in \Dom(\omega)$, $b_0,b_1,\dots, b_n\in\cB\simeq \cB^0\oplus \C1_{\cA\ast \cB}$. 
We call $\omega\tri\tau$ the \emph{cyclic monotone product} of $\omega$ and $\tau$. 
\end{definition}

\begin{proposition}
The cyclic monotone product $\omega \tri \tau$ is a tracial positive linear functional on $\Dom(\omega\tri\tau)$ and 
hence $(\cA\ast \cB, \omega\tri\tau)$ becomes a non-commutative measure space.  
\end{proposition}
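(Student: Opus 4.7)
The plan is to verify in turn that $\omega\tri\tau$ is a well-defined linear functional on $\Dom(\omega\tri\tau)$, then Hermitian, tracial, and positive. For well-definedness, I would note that the defining formula is multilinear in the $a_i$'s and $b_j$'s and compatible with the identification $a_i \cdot 1_\cB \cdot a_{i+1} = a_i a_{i+1} \in \Dom(\omega)$ in the free product, provided $\cB^0 = \ker\tau$ (so that splitting $b_k = \tau(b_k)1_\cB + (b_k - \tau(b_k)1_\cB)$ gives consistent values). Hermiticity then follows from a short computation using $\omega(y^*) = \overline{\omega(y)}$, $\tau(y^*) = \overline{\tau(y)}$, and the traciality of $\tau$ applied to the outer $b$-factor.

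For traciality, I would work directly on basis elements. Given $x = b_0 a_1 b_1 \cdots a_n b_n$ and $y = b_0' a_1' b_1' \cdots a_m' b_m'$ in $\Dom(\omega\tri\tau)$, the product $xy$ has $n+m$ $a$'s with middle $b$'s obtained by concatenation through the junction $b_n b_0'$. Applying the defining formula, $(\omega\tri\tau)(xy)$ and $(\omega\tri\tau)(yx)$ match, since $\omega$ is tracial (cycling the full $a$-product) and $\tau$ is tracial (identifying $\tau(b_n b_0') = \tau(b_0' b_n)$ and $\tau(b_m' b_0) = \tau(b_0 b_m')$ at the two junctions).

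The main step is positivity. The key observation is that for a basis element $m_\alpha = b_0^{(\alpha)} a_1^{(\alpha)} b_1^{(\alpha)} \cdots a_{n_\alpha}^{(\alpha)} b_{n_\alpha}^{(\alpha)}$ of the direct sum decomposition (in which the middle $b_k^{(\alpha)}$ lie in $\cB^0 = \ker\tau$), the formula for $(\omega\tri\tau)(m_\alpha^* m_\beta)$ contains a factor $\tau((b_k^{(\alpha)})^*)$ for each middle index $k \in \{1,\dots,n_\alpha-1\}$, and these vanish. Hence $(\omega\tri\tau)(m_\alpha^* m_\beta) = 0$ whenever $n_\alpha \geq 2$ or $n_\beta \geq 2$, so the only surviving contributions come from the four $n=1$ summands $\Dom(\omega)$, $\Dom(\omega) \otimes \cB^0$, $\cB^0 \otimes \Dom(\omega)$, and $\cB^0 \otimes \Dom(\omega) \otimes \cB^0$.

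On these summands, writing $m_\alpha = b_0^{(\alpha)} a^{(\alpha)} b_1^{(\alpha)}$ with $b_0^{(\alpha)}, b_1^{(\alpha)} \in \{1_\cB\}\cup\cB^0$, the formula reduces to
\[
(\omega\tri\tau)(m_\alpha^* m_\beta) = \omega\bigl((a^{(\alpha)})^* a^{(\beta)}\bigr)\,\tau\bigl((b_0^{(\alpha)})^* b_0^{(\beta)}\bigr)\,\tau\bigl((b_1^{(\alpha)})^* b_1^{(\beta)}\bigr),
\]
which is the entry-wise (Schur) product of three Gram matrices arising from the GNS-type inner products for $\omega$ on $\Dom(\omega)$ and for $\tau$ on $\cB$; hence it is positive semidefinite. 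This yields $(\omega\tri\tau)(x^* x) \geq 0$ for all $x \in \Dom(\omega\tri\tau)$. The hardest part will be the case analysis arising from whether each outer $b$ equals $1_\cB$ or lies in $\cB^0$, but once the middle-$b$ vanishing is established the rest reduces to that single Schur-product argument.
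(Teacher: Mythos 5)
Your argument is correct, but on the key step (positivity) it takes a genuinely different route from the paper's. The paper fixes an arbitrary complement $\cB^0$ and proves positivity for basis elements $m_i=b_{i,0}a_{i,1}b_{i,1}\cdots a_{i,m(i)}b_{i,m(i)}$ of arbitrary tensor depth all at once, by realizing $\bigl((\omega\tri\tau)(m_i^*m_j)\bigr)_{i,j}$ as the Schur product $A\circ B'\circ B''$ of three positive-semidefinite matrices, the nontrivial one, $B''$, being the Gram matrix of the elements $\tau(b_{i,1})\cdots\tau(b_{i,m(i)-1})\,b_{i,m(i)}\in\cB$ under $\tau$. You instead make the specific choice $\cB^0:=\ker\tau$ (legitimate since $\tau(1_\cB)=1$, and $\omega\tri\tau$ is independent of the choice of complement) and observe that $(\omega\tri\tau)(m_\alpha^*m_\beta)$ vanishes outright as soon as $n_\alpha\geq2$ or $n_\beta\geq2$, because a middle factor $\tau\bigl((b_k^{(\alpha)})^*\bigr)=0$ appears. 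This collapses positivity to depth-one tensors, where the Schur product of three ordinary Gram matrices finishes it. Your route is cleaner and directly explains the non-faithfulness of Remark~\ref{rem:not-faithful} (the null directions are exactly the depth $\geq2$ part); the paper's route avoids committing to a particular $\cB^0$. The traciality and Hermiticity arguments are essentially the same as the paper's.

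Two small corrections. First, you assert that well-definedness requires $\cB^0=\ker\tau$; in fact the defining formula is multilinear in $(a_1,\dots,a_n,b_0,\dots,b_n)$ and compatible with the free-product identification $a\,1_\cB\,a'=aa'$ for any complement, precisely because $\tau(1_\cB)=1$, so $\omega\tri\tau$ is the same functional for every choice of $\cB^0$ --- which is also what licenses your specialization. Second, the final ``case analysis'' you flag is actually vacuous: writing $m_\alpha=b_0^{(\alpha)}a^{(\alpha)}b_1^{(\alpha)}$ with $b_0^{(\alpha)},b_1^{(\alpha)}\in\{1_\cB\}\cup\cB^0$, your displayed formula $(\omega\tri\tau)(m_\alpha^*m_\beta)=\omega\bigl((a^{(\alpha)})^*a^{(\beta)}\bigr)\tau\bigl((b_0^{(\alpha)})^*b_0^{(\beta)}\bigr)\tau\bigl((b_1^{(\alpha)})^*b_1^{(\beta)}\bigr)$ already holds uniformly across all four depth-one summands.
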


\begin{proof}
Let $x= b_0 a_1 b_1 \cdots a_n b_n$ and $y= b_0' a_1' b_1' \cdots a_m ' b_m'$ where $a_i, a_i' \in \Dom(\omega), b_i, b_i' \in \cB^0\cup \C1_{\cA\ast\cB}$. 
The traciality follows from 
\be
\begin{split}
(\omega \tri\tau)(x y) 
&= \omega(a_1 a_2 \cdots a_n a_1' \cdots a_m') \tau(b_0b_m') \tau(b_n b_0') \prod_{i=1}^{n-1} \tau(b_i)\prod_{i=1}^{m-1} \tau(b_i') \\
&= \omega(a_1' \cdots a_m' a_1 a_2 \cdots a_n) \tau(b_m'b_0) \tau(b_0' b_n) \prod_{i=1}^{n-1} \tau(b_i)\prod_{i=1}^{m-1} \tau(b_i') \\
&= (\omega \tri\tau)(y x).  
\end{split}
\ee
For the positivity it is enough to prove $(\omega\tri\tau)(x^* x)\geq 0$ when 
\be
x= \sum_{i=1}^{n}\lambda_{i} b_{i,0} a_{i,1}b_{i,1}a_{i,2}b_{i,2}\cdots a_{i, m(i)}b_{i,m(i)}, 
\ee
where $\lambda_{i}\in\C$, $a_{i, j}\in\Dom(\omega)$ and $b_{i, j}\in\cB^0 \cup\C1_{\cA\ast\cB}$. By the positivity of $\omega$ we have that 
\begin{align}
\omega\left(\left(\sum_{i=1}^{n}\lambda_{i} a_{i,1}\cdots a_{i,m(i)}\right)^*
\left(\sum_{i=1}^{n}\lambda_{i} a_{i,1}\cdots a_{i,m(i)}\right)\right)\geq 0. 
\end{align}
Then we have
\begin{align}
\sum_{i,j=1}^{n}\overline{\lambda_{i}}\lambda_j \omega(a_{i,m(i)}^*\cdots a_{i,1}^* a_{j,1}\cdots a_{j,m(j)})\geq0
\end{align}
for any $(\lambda_1, \dots, \lambda_n) \in \C^n$. 
Thus the $n\times n$ matrix 
\be
A=\left( \omega(a_{i,m(i)}^*\cdots a_{i,1}^* a_{j,1}\cdots a_{j,m(j)})\right)_{i,j=1}^n
\ee
 is positive definite. We also show that the matrices 
\begin{align}
&B':=\left(\tau(b_{i,0}^* b_{j,0}) \right)_{i,j=1}^n, \\
&B'':=\left(\tau(b_{i,1}^*)\tau(b_{j,1}) \cdots \tau(b_{i,m(i)-1}^*) \tau(b_{j,m(j)-1}) \tau(b_{i,m(i)}^*b_{j,m(j)})\right)_{i,j=1}^n 
\end{align}
are positive definite. It is easy to show that $B'$ is positive definite. The matrix $B''$ is also positive definite since 
\be
\begin{split}
&\sum_{i,j=1}^{n}\overline{\lambda_{i}}\lambda_{j}\tau(b_{i,1}^*)\tau(b_{j,1}) \cdots \tau(b_{i,m(i)-1}^*) \tau(b_{j,m(j)-1}) \tau(b_{i,m(i)}^*b_{j,m(j)}) \\
&= \tau\left(\left( \sum_{i=1}^n \lambda_i  \tau(b_{i,1})  \cdots \tau(b_{i,m(i)-1})  b_{i,m(i)}  \right)^* \left( \sum_{i=1}^n \lambda_i  \tau(b_{i,1})  \cdots \tau(b_{i,m(i)-1})  b_{i,m(i)}  \right)   \right) \geq0 
\end{split}
\ee
for any vector $(\lambda_1,\dots, \lambda_n) \in \C^n.$ 

The Schur product $A\circ B' \circ B''$ is also positive definite.  
Now using the definition of cyclic monotone product, we have 
\begin{align*}
0&\leq (\lambda_{1},\dots,\lambda_{n})^* ((A\circ B' \circ B'')(\lambda_{1},\dots,\lambda_{n}))&\\
&= (\omega\tri\tau)\left(\left(\sum_{i=1}^{n}\lambda_{i} b_{i,0}a_{i,1}b_{i,1}\cdots a_{i,m(i)}b_{i,m(i)}\right)^*\left(\sum_{i=1}^{n}\lambda_{i} b_{i,0}a_{i,1}b_{i,1}\cdots a_{i,m(i)}b_{i,m(i)}\right)\right). 
\end{align*}
Therefore we obtain $(\omega\tri\tau)(x^* x)\geq 0$.
\end{proof}

\begin{remark}\label{rem:not-faithful}
It turns out that this scalar product is not faithful at the algebraic level. 
For example, with the above notation, if $a,b$ are selfadjoint non-zero, and $\tau (b)=0$, then, calling $c=aba$, one
sees that $(\omega \tri\tau) (c^2)= \omega(a^4)\tau(b)^2=0$.
\end{remark}

Let $\widetilde{\tau}\colon\cA\ast\cB \to \C$ be the free product map of the zero map on $\cA$ and the trace $\tau$ 
on $\cB$. This map $\widetilde{\tau}$ is a tracial state on $\cA\ast\cB$, and gives a triple 
$(\cA\ast\cB, \omega \tri\tau, \widetilde{\tau} )$. Then $(\Dom(\omega), \cB)$ is cyclically monotone with respect 
to $( \omega\tri\tau, \widetilde{\tau})$.  This construction is a universal one.

When applying to random matrices we need the asymptotic version of independence. 

\begin{definition}
Let $(\cC_n, \omega_n, \tau_n)$ be non-commutative probability spaces with tracial weights. Let $a_1(n), \dots, a_k(n) \in \Dom(\omega_n)$ and $b_1(n), \dots, b_\ell(n)\in\cC_n$. The pair $(\{a_i(n)\}_{i=1}^k, \{b_i(n)\}_{i=1}^\ell)$ is {\it asymptotically cyclically monotone} if there exist a non-commutative probability space $(\cC, \omega, \tau )$ with a tracial weight, $a_1,\dots, a_k \in \Dom(\omega)$ and $b_1,\dots, b_\ell \in \cC$ such that 
\begin{enumerate}[\rm(1)]
\item $(\{a_1,\dots, a_k\}, \{b_1,\dots, b_\ell\})$ is cyclically monotone; 
\item for any non-commutative $\ast$-polynomial $P(x_1,\dots, x_k, y_1,\dots, y_\ell)$ such that 
$$P(0,\dots, 0,y_1,\dots, y_\ell)=0,$$ we have $P(a_1(n), \cdots, a_k(n), b_1(n),\dots, b_\ell(n)) \in \Dom(\omega_n)$ and 
\begin{align*}
\lim_{n\to\infty}\omega_n(P(a_1(n), \cdots, a_k(n), b_1(n),\dots, b_\ell(n))) = \omega(P(a_1,\dots, a_k, b_1,\dots, b_\ell)).  
\end{align*}
\end{enumerate}
\end{definition}

\subsection{Relation to infinitesimal freeness} Biane, Goodman and Nica introduced a kind of freeness related to 
type B noncrossing partitions in \cite{BGN2003} and then Belinschi and Shlyakhtenko formulated it as infinitesimal freeness, 
which is freeness with respect to a parametrized state $\tau_t ~(t\in (-\ep, \ep))$ up to the order $o(t)$ \cite{BeSh2012} 
(see also F\'evrier and Nica's work \cite{FN2010}). More precisely, let $\{\tau_t\}_{t \in (-\ep,\ep)}$ be a family of traces on 
an unital $\ast$-algebra $\cC$. Let $\tau:=\tau_0$ and suppose that $\tau_t$ is differentiable in the sense that the limit   
\be
\tau'(x):= \lim_{t\to0}\frac{\tau_t(x)- \tau(x)}{t}
\ee
exists for every $x\in\cC$. Moreover, suppose that $\ast$-subalgebras $\cA, \cB$ are free with respect to $\tau_t$ for every $t\in(-\ep,\ep)$, then we have a computation formula such as 
\begin{align}
\tau_t(a b) &= \tau_t(a)\tau_t(b), \\
\tau_t(a b a')&= \tau_t(a a ')\tau_t(b), \\
\tau_t(ab a' b')&= \tau_t(a a')\tau_t(b)\tau_t(b') + \tau_t(a)\tau_t(a')\tau_t(b b') - \tau_t(a)\tau_t( a')\tau_t(b)\tau_t(b'),   
\end{align}
for $a,a' \in \cA$ and $b,b' \in \cB$. Putting $t=0$ we get these formulas for $\tau$, and moreover if we take the derivative regarding $t$ at 0 we get the formulas 
\begin{align}
\tau'(a b) &= \tau'(a)\tau(b) + \tau(a)\tau'(b), \\
\tau'(a b a') &= \tau'(a a ')\tau(b) + \tau(a a')\tau'(b), \\
 \tau'(ab a' b') &= \tau'(a a') \tau(b)\tau(b') + \tau(a a') \tau'(b)\tau(b') +\tau'(a a') \tau(b)\tau'(b') \\
&+  \tau'(a)\tau(a')\tau(b b') + \tau(a)\tau'(a')\tau(b b')+ \tau(a)\tau(a')\tau'(b b') \notag\\
&- (4 \text{~terms}).  \notag
\end{align}
These computation formulas give some kind of universal formulas for mixed moments. 

Conversely, given $\tau, \tau'$ we can define infinitesimal freeness. 
 Let $\cC$ be a unital $\ast$-algebra, let $\tau$ be a tracial state on $\cC$ and let $\tau'$ be a tracial linear functional which 
 satisfies $\tau'(1_\cC)=0$. The triple $(\cC,\tau',\tau)$ is called an {\it infinitesimal non-commutative probability space} or 
 non-commutative probability space of type B. 
\begin{definition}\label{defIF} Let $(\cC,\tau',\tau)$ be an infinitesimal non-commutative probability space. 
Let  $\cA$ and $\cB$ be $\ast$-subalgebras of $\cC$, which may not contain the unit of $\cC$. Let $\tau_t:= \tau+t \tau'$ for 
$t \in \R$. We say that $\cA$ and $\cB$ are infinitesimally free if for any $n\in\N$, $a_1, \dots, a_n \in\cA$ and $b_1,\dots, b_n\in\cB$ 
\be
\tau_t((a_1 - \tau_t(a_1)1_\cC)(b_1-\tau_t(b_1)1_\cC)(a_2-\tau_t(a_2)1_\cC) \cdots (b_n-\tau_t(b_n)1_\cC)) = o(t). 
\ee
\end{definition}
\begin{remark} In the above definition $\tau_t$ may not be positive, but it does not matter in defining infinitesimal freeness. 
\end{remark}

Our cyclic monotone independence is a special case of infinitesimal freeness, which was essentially proved in \cite{Shl}. 
\begin{proposition}\label{PropIF}
Let $(\cC,\tau',\tau)$ be an infinitesimal non-commutative probability space and let $\cA,\cB$ be its $\ast$-subalgebras 
such that $1_\cC\in\cB$ and $\ideal_\cB(\cA) \subset \ker(\tau)$. 
Then  $\cA,\cB$ are infinitesimally free if and only if $(\cA,\cB)$ is cyclically monotone with respect to $(\tau',\tau)$. 
\end{proposition}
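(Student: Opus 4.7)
The plan is to expand the infinitesimal-freeness condition in powers of $t$, use the hypothesis $\ideal_\cB(\cA) \subset \ker\tau$ to trivialize all $\tau$-contributions, and then convert the resulting identity into the cyclic-monotone formula by a multilinear expansion plus induction on $n$.

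First I would record that the ideal hypothesis forces $\tau(x) = 0$ for every product $x$ containing at least one factor from $\cA$; in particular $\tau(a) = 0$ for $a \in \cA$, so $\tau_t(a_i) = t\tau'(a_i)$. Writing
\[
X(t) := \prod_{i=1}^n \bigl(a_i - t\tau'(a_i) 1_\cC\bigr)\bigl(b_i - \tau(b_i) 1_\cC - t\tau'(b_i) 1_\cC\bigr),
\]
one expands $\tau_t(X(t))$ as a polynomial in $t$. Its constant term equals $\tau$ applied to $a_1(b_1-\tau(b_1))\cdots a_n(b_n-\tau(b_n))$, which lies in $\ideal_\cB(\cA)$ and hence vanishes. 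The coefficient of $t$ coming from the $\tau$-piece is a sum of $\tau$-values of products that still contain an $a$-factor, hence vanishes as well. Therefore $\tau_t(X(t)) = o(t)$ is equivalent to the single identity
\[
\tau'\bigl(a_1(b_1 - \tau(b_1))a_2(b_2 - \tau(b_2))\cdots a_n(b_n - \tau(b_n))\bigr) = 0 \quad (\ast)
\]
for every $n \in \N$, $a_i \in \cA$, $b_i \in \cB$.

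Next I would show that $(\ast)$ for all $n$ is equivalent to the cyclic-monotone identity
\[
\tau'(a_1 b_1 \cdots a_n b_n) = \tau'(a_1 \cdots a_n)\prod_{i=1}^n \tau(b_i).
\]
Expanding $(\ast)$ multilinearly yields $\sum_{S \subseteq [n]}(-1)^{|S|}\prod_{i\in S}\tau(b_i)\cdot\tau'(a_1 b_1^{(S)}\cdots a_n b_n^{(S)})$, with $b_i^{(S)} := 1_\cC$ if $i \in S$ and $b_i^{(S)} := b_i$ otherwise. Assuming cyclic monotonicity (the formula remains valid when $b_i = 1_\cC$ since $\tau(1_\cC) = 1$), each term factors as $\tau'(a_1\cdots a_n)\prod_{i \notin S}\tau(b_i)$, and the sum telescopes via $\sum_{S}(-1)^{|S|} = 0$. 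For the converse I would induct on $n$: the base $n = 1$ is $(\ast)$ itself, and for $n \geq 2$ I isolate the $S = \emptyset$ term. For each $S \neq \emptyset$, inserting $1_\cC$'s merges consecutive $a_j$'s into new $\cA$-elements and shortens the product to length $n - |S| < n$; when $n \in S$ the product ends in an $a$-block, which I first rotate to the front using the traciality of $\tau'$. The induction hypothesis then gives every summand the value $\tau'(a_1 \cdots a_n)\prod_{i \notin S}\tau(b_i)$, and the identity $\sum_{\emptyset \neq S \subseteq [n]}(-1)^{|S|} = -1$ completes the induction.

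The main obstacle is the combinatorial bookkeeping in the inductive step: one must verify that the merged $a$-blocks (and, when $n \in S$, the cyclic rotation supplied by traciality of $\tau'$) genuinely recombine to give back the product $a_1 a_2 \cdots a_n$ and that the surviving $b$-factors are precisely $\{b_i : i \notin S\}$. Once this is carried through, the remainder of the argument is purely mechanical.
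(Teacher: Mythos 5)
Your proof is correct, and it takes a genuinely different route from the paper's. The paper proves the two directions separately: for ``infinitesimally free $\Rightarrow$ cyclically monotone'' it invokes the Nica--Speicher moment formula for free random variables (applied somewhat informally to variables that are only free up to $o(t)$), and for the converse it carries out a direct $J_1 + tJ_2$ computation. You instead isolate a single intermediate condition $(\ast)$ --- the vanishing of $\tau'$ on the alternating $\tau$-centered products --- and show that both infinitesimal freeness and cyclic monotonicity are equivalent to it. The reduction ``infinitesimal freeness $\iff (\ast)$'' uses the ideal hypothesis to kill the constant and linear-in-$t$ contributions of the $\tau$-piece, and the equivalence ``$(\ast)\iff$ cyclic monotone'' is a purely combinatorial multilinear expansion over subsets $S\subseteq[n]$ together with an induction on $n$, using $\tau(1_\cC)=1$, the traciality of $\tau'$ to rotate a terminal $a$-block when $n\in S$, and $\sum_{S\subseteq[n]}(-1)^{|S|}=0$. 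This buys you a proof that is self-contained (no external free-moment formula), symmetric (both directions fall out of the same identity), and arguably tighter than the paper's forward direction, which applies an exact freeness formula to an approximate situation without spelling out the bookkeeping.

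One small imprecision worth fixing: you assert that the $t^1$-coefficient of the $\tau$-piece ``is a sum of $\tau$-values of products that still contain an $a$-factor.'' For $n=1$, deleting the single $a$-factor $a_1$ leaves $b_1-\tau(b_1)1_\cC$, which contains no $a$-factor; the term still dies, but for a different reason, namely $\tau(b_1-\tau(b_1)1_\cC)=0$ by centering. For $n\geq 2$ your claim is exactly right because removing one $a_j$ still leaves $n-1\geq 1$ factors from $\cA$. You should also make explicit, when carrying out the induction, that $(\ast)$ is assumed for all lengths and that the inductive hypothesis is the cyclic-monotone identity for words of length $<n$; as written this is implicit but it is the load-bearing structure of the argument.
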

\begin{remark}
We assumed the positivity of the weight and the state in the definition of cyclic monotone independence, but we may drop it.   
The conclusion of Proposition \ref{PropIF} is to be understood in this generalized setup. 
\end{remark}
\begin{proof} Suppose that $\cA,\cB$ are infinitesimally free. 
Let $\tau_t:=\tau+t\tau'$. Since $\cA,\cB$ are free with respect to $\tau_t$ up to $o(t)$, it can be shown that  
\be\label{eq329}
\tau_t(a_1b_1\cdots a_n b_n)= \tau_t(a_1 a_2\cdots a_n) \tau_t(b_1) \tau_t(b_2)\cdots\tau_t(b_n) + R(t) +o(t), 
\ee
where $R(t)$ is the sum of monomials such that every monomial in $R(t)$ contains at least two factors of the form 
$\tau_t(a_{k_1}\cdots a_{k_p}), p <n$. For the proof apply the formula for products of free random 
variables \cite[Theorem 14.4]{NiSp2006}.  Our assumption implies that $\tau|_{\cA}=0$, and so $R(t)=O(t^2)$. 
Comparing the coefficients of $t$ in \eqref{eq329} we get the formula 
\be
\tau'(a_1b_1\cdots a_n b_n)= \tau'(a_1 a_2\cdots a_n) \tau(b_1) \tau(b_2)\cdots\tau(b_n), 
\ee
implying the cyclic monotone independence. 

For the converse direction, we check Definition \ref{defIF}. We have the decomposition  
\be
\begin{split}
&\tau_t((a_1 - \tau_t(a_1)1_\cC)(b_1-\tau_t(b_1)1_\cC)(a_2-\tau_t(a_2)1_\cC) \cdots (b_n-\tau_t(b_n)1_\cC)) \\
&= \tau((a_1 - \tau_t(a_1)1_\cC)(b_1-\tau_t(b_1)1_\cC)(a_2-\tau_t(a_2)1_\cC) \cdots (b_n-\tau_t(b_n)1_\cC)) \\
&\quad+ t \tau'((a_1 - \tau_t(a_1)1_\cC)(b_1-\tau_t(b_1)1_\cC)(a_2-\tau_t(a_2)1_\cC) \cdots (b_n-\tau_t(b_n)1_\cC)) \\
&=: J_1 + t J_2. 
\end{split}
\ee
By the assumption $\ideal_\cB(\cA) \subset \ker(\tau)$, we can see that 
\be
\begin{split}
J_1 
&= \tau((- \tau_t(a_1)1_\cC)(b_1-\tau_t(b_1)1_\cC)(-\tau_t(a_2)1_\cC) \cdots (b_n-\tau_t(b_n)1_\cC)) \\
&=(-1)^n t^n \tau'(a_1) \cdots \tau'(a_n) \tau((b_1-\tau_t(b_1)1_\cC)(b_2 -\tau_t(b_2)1_\cC) \cdots (b_n-\tau_t(b_n)1_\cC) )\\
&=O(t^2). 
\end{split}
\ee
Note that when $n=1$, we can show $J_1=O(t^2)$ since $ \tau(b_1-\tau_t(b_1)1_{\cC}) = t \tau'(b_1)$. For $J_2$, 
since $\tau_t(a_i)=-t \tau'(a_i)$ we can see that 
\be\label{eq330}
J_2 = \tau'(a_1 (b_1-\tau_t(b_1)1_\cC)a_2 \cdots a_n (b_n-\tau_t(b_n)1_\cC))+O(t). 
\ee
By cyclic monotonicity we obtain 
\be\label{eq331}
\begin{split}
 &\tau'(a_1 (b_1-\tau_t(b_1)1_\cC)a_2 \cdots a_n (b_n-\tau_t(b_n)1_\cC)) \\
 &=  \tau'(a_1 a_2 \cdots a_n) (\tau (b_1)-\tau_t(b_1)) \cdots (\tau(b_n)-\tau_t(b_n)) \\
 &= \tau'(a_1 a_2 \cdots a_n) (-t)^n \tau'(b_1)\tau'(b_2) \cdots \tau'(b_n) \\
&=O(t). 
\end{split}
\ee
Combining \eqref{eq330} and \eqref{eq331} we obtain $J_2=o(1)$. Therefore, $J_1+t J_2 = o(t)$. 
\end{proof}

For later use we define a notion of the convergence of elements in non-commutative probability spaces to elements in an 
infinitesimal non-commutative probability space. This definition is inspired by \cite{BeSh2012,Shl}.   
\begin{definition}\label{def convergenceIF}
Suppose that $(\cC,\tau',\tau)$ is an infinitesimal non-commutative probability space, $(\cC_n, \tau_n)$ are non-commutative 
probability spaces and $c_1,\dots, c_\ell \in \cC, c_1(n), \dots, c_\ell(n) \in\cC_n$. We say that the tuple 
$((c_1(n),\dots, c_\ell(n)), \tau_n)$ {\it converges up to the first order} to $((c_1,\dots, c_\ell), (\tau',\tau))$ if 
\begin{enumerate}[\rm(i)]
\item $((c_1(n),\dots, c_\ell(n)), \tau_n)$  converges in distribution to $((c_1,\dots, c_\ell),\tau)$, 
\item for any non-commutative $\ast$-polynomial $P(y_1,\dots, y_\ell)$,  
\[
\lim_{n\to\infty} \frac{\tau_n(P(c_1(n), \dots, c_\ell(n))) - \tau(P(c_1,\dots, c_\ell))}{1/n} =\tau'(P(c_1,\dots, c_\ell)). 
\]
\end{enumerate}
\end{definition}

\subsection{Eigenvalues of polynomials of cyclic monotone elements}\label{secEVCM}

In this section we compute the eigenvalues of polynomials of cyclic monotone elements. We need a combinatorial lemma 
to compute the eigenvalues of the anti-commutator. 

\begin{lemma}\label{lem1} For every integer $n\geq0 $ we have 
\[
\sum_{\ell =0}^{n-2 m} \binom{\ell +m-1}{m-1}\binom{n-\ell-m}{m} (-1)^\ell = \binom{[n/2]}{m}, \qquad m=0,1,\dots, [n/2], 
\]
where $\binom{p}{-1}:=\delta_{p,-1 }$ for integers $p$. 
\end{lemma}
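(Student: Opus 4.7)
The plan is to establish the identity by comparing generating functions in $n$. Fix $m \geq 1$ (the case $m=0$ is immediate: the convention $\binom{p}{-1}=\delta_{p,-1}$ forces the summand to vanish unless $\ell=0$, giving $\binom{n}{0}=1=\binom{[n/2]}{0}$). For $m \geq 1$, I would form the formal power series $\sum_{n \geq 0} L_m(n) x^n$, where $L_m(n)$ denotes the left-hand side, by re-indexing the double sum with $k := n - \ell - m$, so $n = 2m + \ell + k - m = \ell + m + k$.

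With that substitution, the sum decouples as a product of two ordinary generating functions, and I would invoke the two standard identities
\begin{equation*}
\sum_{\ell \geq 0} \binom{\ell+m-1}{m-1} z^\ell = \frac{1}{(1-z)^m}, \qquad \sum_{k \geq 0} \binom{k}{m} x^k = \frac{x^m}{(1-x)^{m+1}},
\end{equation*}
applied with $z = -x$ in the first. This yields
\begin{equation*}
\sum_{n \geq 0} L_m(n) x^n = \frac{x^m}{(1+x)^m} \cdot \frac{x^m}{(1-x)^{m+1}} = \frac{x^{2m}}{(1+x)^m (1-x)^{m+1}}.
\end{equation*}

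For the right-hand side, I would split the series over parities of $n$, writing
\begin{equation*}
\sum_{n \geq 0} \binom{[n/2]}{m} x^n = (1+x) \sum_{j \geq 0} \binom{j}{m} x^{2j} = \frac{(1+x)\, x^{2m}}{(1-x^2)^{m+1}},
\end{equation*}
and then simplify $(1+x)/(1-x^2)^{m+1} = 1/[(1-x)^{m+1}(1+x)^m]$ using $1-x^2 = (1-x)(1+x)$. The two generating functions agree, so coefficient extraction completes the proof.

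The only subtle step is the treatment of the edge cases: the convention $\binom{p}{-1} = \delta_{p,-1}$ ensures the $m=0$ formula is consistent with the generating-function argument (where one would otherwise have $(1-z)^0 = 1$ rather than a series starting at $\ell = 0$), and one must verify that the range of summation $\ell \in \{0, \dots, n-2m\}$ is automatically enforced because $\binom{n-\ell-m}{m}$ vanishes when $n-\ell-m < m$ (i.e.\ the upper limit can be harmlessly extended to $\ell \geq 0$ in the generating-function manipulation). I do not anticipate a significant obstacle beyond bookkeeping; an alternative route via a telescoping/Zeilberger-type recurrence in $n$ would also work but is less transparent than the generating-function identification.
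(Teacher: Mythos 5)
Your proof is correct and follows essentially the same generating-function strategy as the paper: both arguments reduce to verifying the rational-function identity that the Cauchy product of the two standard series equals (after simplifying via $1-x^2=(1-x)(1+x)$) the generating function for $\binom{[n/2]}{m}$. The only difference is cosmetic — the paper first reindexes with $p=\ell+m-1$, $r=n-1$ to land on the generating function $\frac{(-x)^{m-1}}{(1+x)^m}\cdot\frac{x^m}{(1-x)^{m+1}}$, whereas you work directly with $n$ and arrive at $\frac{x^{2m}}{(1+x)^m(1-x)^{m+1}}$; your version avoids the shift and is, if anything, slightly cleaner.

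One small remark on your closing caveat: extending the sum to all $\ell\geq 0$ is not quite ``harmless'' as stated, since $\binom{n-\ell-m}{m}$ does not vanish when $n-\ell-m$ is negative (e.g.\ $\binom{-1}{m}=(-1)^m$). What actually makes the bookkeeping work is that your substitution $k=n-\ell-m$ ranges over $k\geq 0$, which automatically caps $\ell$ at $n-m$, and for $n-2m<\ell\leq n-m$ the factor $\binom{k}{m}$ with $0\leq k<m$ vanishes. So the range is enforced by the reindexing rather than by extension — worth stating precisely, but it does not affect the validity of the argument.
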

\begin{proof} If $m=0$ the identity is easy, so we assume that $m \geq1$. 
The desired formula can be written in the form
\be
\sum_{p=0}^{\infty} \binom{p}{m-1}\binom{r-p}{m} (-1)^p = (-1)^{m-1}\binom{[(r+1)/2]}{m}, \qquad m=1,\dots, [(r+1)/2]. 
\ee
In order to prove this we use the identity 
\be
\frac{x^m}{(1-x)^{m+1}} = \sum_{p=0}^\infty \binom{p}{m}x^p. 
\ee
We observe that 
\be\label{eq92}
\begin{split}
\frac{(-x)^{m-1}}{(1+x)^m}\cdot \frac{x^m}{(1-x)^{m+1}}
&= \left(\sum_{p=0}^\infty \binom{p}{m-1}(-x)^p\right) \left(\sum_{q=0}^\infty \binom{q}{m}x^p\right) \\
&= \sum_{r=0}^\infty \left(\sum_{p=0}^\infty \binom{p}{m-1}\binom{r-p}{m} (-1)^p \right) x^r. 
\end{split}
\ee
The left hand side can be computed as 
\be\label{eq93}
\begin{split}
\frac{(-x)^{m-1}}{(1+x)^m}\cdot \frac{x^m}{(1-x)^{m+1}}
&= \frac{(-1)^{m-1}(1+x) x^{2m-1}}{(1-x^2)^{m+1}} \\
&= (-1)^{m-1}(1+x) \sum_{p=0}^\infty \binom{p}{m}x^{2 p-1} \\
&= (-1)^{m-1}\sum_{p=0}^\infty \binom{p}{m} (x^{2 p-1}+x^{2 p}) \\
&= (-1)^{m-1}\sum_{r=0}^\infty \binom{[(r+1)/2]}{m} x^r. 
\end{split}
\ee
The comparison of \eqref{eq92} and \eqref{eq93} implies the conclusion. 
\end{proof}

\begin{theorem}\label{compute} Let $(\cC,\omega, \tau)$ be a non-commutative probability space with a tracial weight. 
Let $a, a_1,\dots, a_k \in \Dom(\omega)$ and let $b, b_1,\dots, b_k \in \cC$. Suppose that $(a,a_1,\dots, a_k)$ has a trace 
class distribution with respect to $\omega$ (see Section \ref{sec3.1}) and that the pair $(\{a,a_1,\dots, a_k\}, \{b,b_1,\dots, b_k\})$ 
is cyclically monotone in $(\cC,\omega, \tau)$. 
\begin{enumerate}[\rm(1)] 
\item\label{A} Suppose that $a_1,\dots, a_k$ are selfadjoint. Let $B:=(\tau(b_i ^\ast b_j))_{i,j=1}^k \in M_k(\C)$. Then 
\[
\EV\left(\sum_{i=1}^k b_i a_i b_i^\ast\right) = \EV\!\left(\sqrt{B} \diag(a_1,\dots, a_k) \sqrt{B}\right),
\]
 where $\sqrt{B} \diag(a_1,\dots, a_k) \sqrt{B}$ is viewed as an element of 
 $(M_k(\C) \otimes \cC ,  \Tr_k \otimes \omega)$. 

\item\label{D} Suppose that $b_1,\dots, b_k$ are selfadjoint. Then 
\[
\EV\left(\sum_{i=1}^k a_i b_i a_i^\ast\right) = \EV\left(\sum_{i=1}^k \tau(b_i) a_ia_i^\ast\right). 
\]

\item\label{B} Suppose that $a,b$ are selfadjoint.  Let $p= \sqrt{\tau(b^2)} +\tau(b), q = -(\sqrt{\tau(b^2)} -\tau(b))$. Then 
$$
\EV(a b + b a) = (p \EV(a ) ) \sqcup (q \EV(a) ).
$$ 

\item\label{C} Suppose that $a,b$ are selfadjoint.   Let $r:= \sqrt{\tau(b^2) -\tau(b)^2}$. Then 
$$
\EV(\ri [a,b]) = (r \EV(a ) ) \sqcup (-r \EV(a) ).
$$
\end{enumerate}
\end{theorem}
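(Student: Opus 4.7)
The overall strategy is the moment method via Corollary \ref{cor unique}: two self-adjoint trace class operators have the same eigenvalue multiset once all their moments agree. For each of (1)--(4), the plan is to expand the $n$-th moment of the left-hand side, use traciality of $\omega$ together with cyclic monotone independence to factor out the $b$-contributions (leaving a pure $\omega$-moment of the $a$'s), and match the result with the $n$-th moment of the right-hand side.

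For (1), expand $\omega\bigl((\sum_i b_i a_i b_i^*)^n\bigr) = \sum_{i_1,\ldots,i_n}\omega(b_{i_1}a_{i_1}b_{i_1}^* b_{i_2}a_{i_2}b_{i_2}^*\cdots b_{i_n} a_{i_n}b_{i_n}^*)$. Cycling $b_{i_n}^*$ to the front via traciality produces the alternating pattern $a_{i_1}(b_{i_1}^* b_{i_2})a_{i_2}\cdots a_{i_n}(b_{i_n}^* b_{i_1})$, and cyclic monotone independence then factors the moment as $\omega(a_{i_1}\cdots a_{i_n})\prod_\ell B_{i_\ell i_{\ell+1}}$ (indices cyclic). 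On the right-hand side, cyclicity of $\Tr_k$ gives $(\Tr_k\otimes\omega)((\sqrt{B}D\sqrt{B})^n) = (\Tr_k\otimes\omega)((DB)^n)$ with $D = \diag(a_1,\ldots,a_k)$, and the entrywise expansion of $(DB)^n$ reproduces exactly the same double sum. Part (2) is analogous: cycling $a_{i_n}^*$ to the front converts the moment word into $A_1 b_{i_1} A_2 b_{i_2}\cdots A_n b_{i_n}$ with $A_k := a_{i_{k-1}}^* a_{i_k}$; cyclic monotone independence factors out $\prod_k\tau(b_{i_k})$, and the telescoping $A_1\cdots A_n = a_{i_n}^* a_{i_1}a_{i_1}^* a_{i_2}\cdots a_{i_n}$ identifies the remainder as the expansion of the $n$-th moment of $\sum_i\tau(b_i)a_i a_i^*$.

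For (3) and (4), expand $(ab+ba)^n = \sum_{\epsilon\in\{0,1\}^n} w_\epsilon$ and $(\ri(ab-ba))^n = \ri^n\sum_{\epsilon\in\{0,1\}^n}(-1)^{|\epsilon|}w_\epsilon$, where $\epsilon_i$ selects whether the $i$-th block is $ab$ or $ba$. Cycling $w_\epsilon$ to begin with an $a$ and inserting $1_\cC\in\cB$ between any two adjacent $a$'s (legitimate by Definition \ref{construction}), cyclic monotone independence yields $\omega(w_\epsilon) = \omega(a^n)\prod_\ell\tau(b^{k_\ell(\epsilon)})$, with $k_\ell(\epsilon)$ the lengths of the cyclic $b$-runs. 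A crucial observation is that every $b$-run in $w_\epsilon$ has length $1$ or $2$, and a length-$2$ $b$-run occurs precisely at a cyclic boundary with $(\epsilon_i,\epsilon_{i+1}) = (0,1)$; letting $j(\epsilon)$ count these $(0,1)$-transitions, one gets $\prod_\ell\tau(b^{k_\ell(\epsilon)}) = \tau(b^2)^{j(\epsilon)}\tau(b)^{n-2j(\epsilon)}$. For (3), selecting the $2j$ transition positions among $n$ cyclic slots and fixing the starting value gives $\#\{\epsilon : j(\epsilon) = j\} = 2\binom{n}{2j}$, so the sum equals $\sum_j 2\binom{n}{2j}\tau(b)^{n-2j}\tau(b^2)^j = p^n + q^n$. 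For (4), the same enumeration weighted by $(-1)^{|\epsilon|}$ reduces at each $j$, via Lemma \ref{lem1}, to $2(-1)^j\binom{\lfloor n/2\rfloor}{j}$ (in particular $0$ for $n$ odd), and the factor $\ri^n$ converts this to $r^n + (-r)^n$.

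The main obstacle will be the combinatorial bookkeeping in (3)--(4): one must check that cyclic monotone independence applies after the rotation and unit-insertion, that the $b$-run lengths are correctly read off from $\epsilon$, and that the signed enumeration in (4) indeed matches Lemma \ref{lem1}. The matrix computation in (1) needs no invertibility of $B$, since $\Tr_k((\sqrt{B}D\sqrt{B})^n) = \Tr_k((DB)^n)$ holds by cyclicity alone for any positive semidefinite $B$.
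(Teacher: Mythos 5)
Your approach for parts (1) and (2) matches the paper's proof, with only cosmetic reorganizations (for (1) you pass from $\sqrt{B}\,\diag(a)\sqrt{B}$ to $\diag(a)B$ by cyclicity of $\Tr_k\otimes\omega$, whereas the paper reassembles the double sum directly into $(\sqrt{B}\,\diag(a)\sqrt{B})^n$; both are fine). For (3)--(4), your $\epsilon$-sequence bookkeeping with cyclic $b$-runs of length $1$ or $2$ is an equivalent encoding of the paper's $\NC_{1,2;1}(n)$ partitions (pair blocks correspond to cyclic $(0,1)$-transitions), and your unsigned count $2\binom{n}{2j}$ in (3) agrees with the paper's $\binom{n}{2m}$ restricted to $i_1=1$, doubled. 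One small plus: by keeping $\tau(b)^{n-2j}\tau(b^2)^j$ rather than the paper's $\alpha=\tau(b^2)^{1/2}/\tau(b)$, you avoid the approximation argument the paper inserts to cover $\tau(b)=0$.

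There is, however, a genuine gap in (4). You assert that the $(-1)^{|\epsilon|}$-weighted count at fixed $j$ equals $2(-1)^j\binom{\lfloor n/2\rfloor}{j}$, ``in particular $0$ for $n$ odd.'' That formula is correct for even $n$, but it does not vanish for odd $n$ (e.g., $n=3$, $j=0$ gives $2\binom{1}{0}=2\neq0$), and the true signed count at each $j$ \emph{is} $0$ for odd $n$: for $n=3$, $j=0$ the two constant sequences contribute $+1$ and $-1$, and for $j=1$ the six non-constant sequences split into three of weight $-1$ and three of weight $+1$. If you use your stated per-$j$ formula for odd $n$ you obtain $2\tau(b)\bigl(\tau(b)^2-\tau(b^2)\bigr)^{(n-1)/2}$ rather than the required $0$. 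The paper handles this by a separate step: it first proves $\omega\bigl((\ri[a,b])^n\bigr)=0$ for odd $n$ via the involution $\bfi\mapsto\bfi^\star$ (which reverses the sign $(-1)^{|\epsilon|}$ when $n$ is odd and preserves $j$), and only then applies Lemma \ref{lem1} for even $n$, to the $i_1=1$ (i.e.\ $\epsilon_1=0$) half of the sum, before doubling. You need to insert that odd-$n$ vanishing argument, and make explicit which secondary parameter (the paper's inner-singleton count $\ell$) your per-$j$ count is being summed over in order for Lemma \ref{lem1} to apply.
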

\begin{remark} Our proofs are rather direct combinatorial arguments, but it may be possible to prove \eqref{B} and \eqref{C} 
by generalizing Nica and Speicher's computation of the distributions of commutators and anti-commutators of free random 
variables \cite{NiSp1998} to the infinitesimal free case.  
\end{remark}

\begin{proof}
\eqref{A} Let $x:=\sum_{i=1}^k b_i a_i b_i^\ast$. Since $B$ is symmetric nonnegative definite matrix, we can define $A = (\alpha_{i j})_{i,j=1}^k:= \sqrt{B}$. 
Then $\tau(b_i^\ast b_j)= \sum_{m} \alpha_{i m} \alpha_{m j}$. For $n\in \N$ we have 
\begin{equation}
\begin{split}
\omega(x^n) 
&= \sum_{i_1, \dots, i_n}\omega(b_{i_1} a_{i_1} b_{i_1}^\ast b_{i_2} a_{i_2} b_{i_2}^* \cdots b_{i_n} a_{i_n} b_{i_n}^*)  \\
&= \sum_{i_1, \dots, i_n}\tau(b_{i_1}^\ast b_{i_2}) \tau(b_{i_2}^\ast b_{i_3})\cdots \tau(b_{i_n}^\ast b_{i_1}) \omega(a_{i_1} \cdots a_{i_n}) \\
&=\sum_{ i_1, \dots, i_n, m_1,\dots,m_n}\alpha_{i_1 m_1} \alpha_{m_1 i_2} \alpha_{i_2 m_2} \alpha_{m_2 i_3}\cdots \alpha_{i_n m_n}\alpha_{m_n i_1}  \omega(a_{i_1} \cdots a_{i_n}) \\ 
&= \sum_{m_1,\dots,m_n} \omega\!\left(\left(\sum_{i_1} \alpha_{m_n i_1} a_{i_1} \alpha_{i_1 m_1} \right)\left(\sum_{i_2} \alpha_{m_1 i_2} a_{i_2} \alpha_{i_2 m_2} \right) \cdots \left(\sum_{i_n} \alpha_{m_{n-1} i_n} a_{i_n} \alpha_{i_n m_n} \right) \right) \\
&=  \sum_{m_1,\dots,m_n}\omega(\gamma_{m_n m_1}\gamma_{m_1 m_2}\cdots \gamma_{m_{n-1} m_n}) \\
&= (\Tr_k\otimes \omega)(G^n), 
\end{split}
\end{equation}
where $G =(\gamma_{\ell m})_{\ell,m=1}^k:= A \diag(a_1,\dots, a_k) A\in (M_k(\C) \otimes \cC)_{\rm sa}$. Corollary \ref{cor unique} implies that $\EV(x) = \EV(G)$.

\eqref{D} The claim follows from 
\begin{equation}
\begin{split}
\omega\left(\left(\sum_{i=1}^k a_i b_i a_i^\ast\right)^n\right) 
&= \sum_{i_1, \dots, i_n}\omega(a_{i_1}b_{i_1} a_{i_1}^\ast a_{i_2} b_{i_2} a_{i_2}^\ast \cdots a_{i_n} b_{i_n} a_{i_n}^\ast)  \\
&= \sum_{i_1, \dots, i_n}\omega(a_{i_1}\tau(b_{i_1}) a_{i_1}^\ast a_{i_2} \tau(b_{i_2}) a_{i_2}^\ast \cdots a_{i_n} \tau(b_{i_n}) a_{i_n}^\ast)  \\
&= \omega\left(\left(\sum_{i=1}^k \tau(b_i)a_i a_i^\ast\right)^n\right)
\end{split}
\end{equation}
and from Corollary \ref{cor unique}.

\eqref{B} We may assume that $\tau(b)\neq0$ since the general case can be covered by approximation.  
Let $c_1:= a b, c_2 := b a$. Then 
\be
\omega((a b+ b a)^n)=\sum_{(i_1, \dots, i_n)\in\{1,2\}^n} \omega(c_{i_1} \cdots c_{i_n}). 
\ee
Given $\bfi=(i_1,\dots, i_n)\in\{1,2\}^n$ the set of {\it ascents} is defined by 
\be
\up(\bfi):= \{1 \leq m \leq n \mid (i_m,i_{m+1}) =(1,2)\}
\ee
with the convention that $i_{n+1}=i_1$. Then 
\be
\omega(c_{i_1} \cdots c_{i_n}) = \omega(a^n) \tau(b)^n \alpha^{2 \# \up(\bfi)}, 
\ee
where $\alpha:=\frac{\tau(b^2)^{1/2}}{\tau(b)}$.
Let $\NC_{1,2;1}(n)$ be the set of noncrossing partitions of $\{1,\dots, n\}$ such that every block has cardinality $1$ (singleton) or 
$2$ (pair block), each singleton has depth $0$ or $1$ and each pair block has depth $0$.  
To each $\bfi =(i_1,\dots, i_n)\in\{1,2\}^n$ such that $i_1=1$ we associate $\pi(\bfi) \in \NC_{1,2; 1}(n)$ in the following procedure (with the convention that $i_{n+1}=i_1 ~(=1)$):   
\begin{itemize}
\item If $i_m =i_{m+1}$ and $1 \leq m \leq n$ then $\{m\}$ is a singleton of $\pi(\bfi)$; 

\item If $(i_m, i_{m+1}) = (1,2)$ and $1 \leq m \leq n-1$ then there exists a unique minimal integer $m+1 \leq \ell \leq n$ such that $(i_\ell, i_{\ell+1}) = (2,1)$. Then $\{m,\ell\}$ is a pair block of $\pi(\bfi)$. 
\end{itemize}
\begin{figure}
\setlength{\unitlength}{0.6mm}
\begin{minipage}{0.4\hsize}
\begin{center}
\begin{picture}(80,10)
          \put(0,0){\line(1,1){10}}
          \put(10,10){\line(1,0){20}}
          \put(30,10){\line(1,-1){10}}
          \put(40,0){\line(1,0){20}}
          \put(60,0){\line(1,1){10}}
            \put(70,10){\line(1,0){10}}
\end{picture}

\vspace{5mm}

$\bfi=(1,2,2,2,1,1,1,2,2)$

\begin{picture}(80,20)
          \put(0,0){\line(0,1){10}}
          \put(0,10){\line(1,0){30}}
         \put(10,0){\line(0,1){5}}
          \put(20,0){\line(0,1){5}}
          \put(30,0){\line(0,1){10}}
           \put(40,0){\line(0,1){10}}
            \put(50,0){\line(0,1){10}}
          \put(60,0){\line(0,1){10}}
           \put(60,10){\line(1,0){20}}
             \put(70,0){\line(0,1){5}}
            \put(80,0){\line(0,1){10}}
\end{picture}
\caption{$\pi(1,2,2,2,1,1,1,2,2)$}
\end{center}
\end{minipage}
\begin{minipage}{0.4\hsize}
\begin{center}
\begin{picture}(80,10)
          \put(0,0){\line(1,0){20}}
           \put(20,0){\line(1,1){10}}
            \put(30,10){\line(1,0){10}}
           \put(40,10){\line(1,-1){10}}
          \put(50,0){\line(1,1){10}}
          \put(60,10){\line(1,-1){10}}
           \put(70,0){\line(1,0){10}}
\end{picture}

\vspace{5mm}

$\bfi=(1,1,1,2,2,1,2,1,1)$

\begin{picture}(80,20)
           \put(0,0){\line(0,1){10}}
           \put(10,0){\line(0,1){10}}
           \put(20,10){\line(1,0){20}}
          \put(20,0){\line(0,1){10}}
           \put(40,0){\line(0,1){10}}
            \put(30,0){\line(0,1){5}}
          \put(50,0){\line(0,1){10}}
          \put(60,0){\line(0,1){10}}
          \put(50,10){\line(1,0){10}}
          \put(70,0){\line(0,1){10}}
            \put(80,0){\line(0,1){10}}
\end{picture}
\caption{$\pi(1,1,1,2,2,1,2,1,1)$}
\end{center}
\end{minipage}
\end{figure}
The correspondence $\{(i_1,\dots, i_n)\in\{1,2\}^n\mid i_1=1\}\to \NC_{1,2; 1}(n)$ is bijective and $\#\up(\bfi)= \#\pair(\pi(\bfi))$.  Then 
\be
\begin{split}
\sum_{\substack{(i_1, \dots, i_n)\in\{1,2\}^n\\ i_1=1}} \omega(c_{i_1} \cdots c_{i_n}) 
&=  \omega(a^n) \tau(b)^n \sum_{\pi \in \NC_{1,2;1}(n)}\alpha^{2 \# \pair(\pi)} \\
&=  \omega(a^n) \tau(b)^n\sum_{m=0}^{[n/2]}  \sum_{\substack{\pi \in \NC_{1,2; 1}(n)\\ \# \pair(\pi)=m}}\alpha^{2 m} \\
 &=  \omega(a^n) \tau(b)^n\sum_{m=0}^{[n/2]}  \binom{n}{2 m} \alpha^{2 m} \\
 &=  \omega(a^n) \tau(b)^n\frac{(1+\alpha)^n + (1-\alpha)^n}{2}. 
\end{split}
\ee
We then compute the sum over $\bfi$ such that $i_1=2$. Let $i \mapsto i^\star$ be the flip of $1$ and $2$. Then the map $\bfi \mapsto \bfi^\star := (i_1^\star, \dots, i_n^\star)$ defines an involution on $\{1,2\}^n$ whose restriction to $\{(i_1,\dots, i_n)\in\{1,2\}^n\mid i_1=1\}$ is a bijection onto $\{(i_1,\dots, i_n)\in\{1,2\}^n\mid i_1=2\}$. Moreover, it holds that 
\be
\omega(c_{i_1} \cdots c_{i_n}) = \omega(c_{i_1^\star} \cdots c_{i_n^\star}). 
\ee
Thus 
\be
\sum_{\substack{(i_1, \dots, i_n)\in\{1,2\}^n\\ i_1=2}} \omega(c_{i_1} \cdots c_{i_n})  = \sum_{\substack{(i_1, \dots, i_n)\in\{1,2\}^n\\ i_1=1}} \omega(c_{i_1} \cdots c_{i_n}). 
\ee
Therefore we conclude that 
\begin{align}\label{formula5c}
\begin{split}
\omega((a b+ b a)^n)
 &=  \omega(a^n) \tau(b)^n((1+\alpha)^n + (1-\alpha)^n) \\
 &= \omega(a^n) (p^n + q^n),   
\end{split}
\end{align}
so Corollary \ref{cor unique} implies the conclusion.

\eqref{C} Let $d_1:= a b, d_2 := - b a$ as before. Then 
\be
\omega((\ri[a,b])^n)=\sum_{(i_1, \dots, i_n)\in\{1,2\}^n} \ri^n \omega(d_{i_1} \cdots d_{i_n}). 
\ee
We show that $\omega((\ri[a,b])^n)=0$ when $n$ is odd. If we apply the involution $\bfi \mapsto \bfi^\star$ defined in the proof of \eqref{B}, then 
\be\label{involution}
\omega(d_{i_1}\cdots d_{i_n}) = (-1)^n \omega(d_{i_1^\star}\cdots d_{i_n^\star}). 
\ee
Therefore if $n$ is odd then 
\be
\begin{split}
\omega((\ri[a,b])^n)
&=\sum_{(i_1, \dots, i_n)\in\{1,2\}^n} \ri^n \omega(d_{i_1} \cdots d_{i_n})\\
&=\sum_{\substack{(i_1, \dots, i_n)\in\{1,2\}^n\\i_1=1}} \ri^n \omega(d_{i_1} \cdots d_{i_n})+\sum_{\substack{(i_1, \dots, i_n)\in\{1,2\}^n\\i_1=1}} \ri^n \omega(d_{i_1^\star} \cdots d_{i_n^\star})\\
&= 0. 
 \end{split}
\ee

Hereafter we may assume that $n$ is even. Let $\insing(\pi)$ be the set of inner singletons of $\pi \in \NC_{1,2;1}(n)$, that is, the set of singletons with depth 1. Given $\bfi=(i_1,\dots, i_n)\in\{1,2\}^n$, it holds that 
\be
 \omega(d_{i_1} \cdots d_{i_n}) = (-1)^{\#\insing(\pi(\bfi))+\#\pair(\pi(\bfi))} \omega(a^n)\tau(b)^n \alpha^{2\#\pair(\pi(\bfi))}, 
\ee
where $\alpha$ is the number defined in the proof of \eqref{B}. For fixed integers $0\leq m\leq n/2, 0 \leq \ell \leq n-2 m$, we have that 
\be
\#\{\pi\in\NC_{1,2;1}(n)\mid \#\pair(\pi)=m, \#\insing(\pi)=\ell\} = \binom{\ell+m-1}{m-1}\binom{n-m-\ell}{m}, 
\ee
which is to be understood as 1 when $m= \ell=0$ and 0 when $m=0, \ell >0$. This is because there are $\binom{\ell+m-1}{m-1}$ ways to place $\ell$ unlabeled singletons inside $m$ labeled pair blocks, and then there are $\binom{n-m-\ell}{m}$ ways to place the other $n-2 m-\ell$ unlabeled singletons outside the $m$ labeled pair blocks (the number of ways to distribute $n-2m-\ell$ unlabeled balls into $m+1$ labeled boxes). 
Thus 
\be
\begin{split}
&\sum_{\substack{(i_1, \dots, i_n)\in\{1,2\}^n\\i_1=1}} \ri^n \omega(d_{i_1} \cdots d_{i_n}) \\
&\qquad= \ri^n \omega(a^n)\tau(b)^n\sum_{m=0}^{n/2} (-\alpha^2)^m \sum_{\ell=0}^{n-2 m} \binom{\ell+m-1}{m-1}\binom{n-m-\ell}{m}(-1)^\ell, 
\end{split}
\ee
and by lemma \ref{lem1}, we have that 
\be
\begin{split}
\sum_{\substack{(i_1, \dots, i_n)\in\{1,2\}^n\\i_1=1}} \ri^n \omega(d_{i_1} \cdots d_{i_n}) 
&= \ri^n \omega(a^n)\tau(b)^n\sum_{m=0}^{n/2} (-\alpha^2)^m \binom{n/2}{m}\\
&= (-1)^{n/2} \omega(a^n)\tau(b)^n  (1-\alpha^2)^{n/2}\\
&= \omega(a^n) r^{n}. 
\end{split}
\ee
Since we know \eqref{involution}, we get 
\be
\omega((\ri[a,b])^n) = 2 r^n \omega(a^n)
\ee
for even $n$. If $\lambda_1,  \lambda_2,\dots$ are the eigenvalues of $a$ then for every nonnegative integer $n$ 
\begin{equation}\label{formula5d}
\omega( (\ri[a,b])^n ) = (r\lambda_1)^n + (r \lambda_2)^n +\cdots + (-r\lambda_1)^n + (-r \lambda_2)^n+\cdots, 
\end{equation}
showing the conclusion by Corollary \ref{cor unique}. 
\end{proof}

\section{The link with random matrices}\label{sec:rmt}
In this section, we shall introduce and prove asymptotic cyclic monotone independence of Haar invariant random matrices 
which have limiting compact distributions and random matrices which have limiting distributions with respect to the normalized 
trace. Theorem \ref{ACM} is essentially equivalent to \cite[Lemma 3.1, Lemma 3.2]{Shl} by Shlyakhtenko. 
We supply a proof of this theorem using the {\it Weingarten calculus}. Actually in our proof we can remove the assumption of some 
norm boundedness in Lemma 3.1 of Shlyakhtenko, and so we can unify the proofs of \cite[Lemma 3.1, Lemma 3.2]{Shl}. 
Moreover, we can prove the almost sure asymptotic independence and almost sure convergence of discrete eigenvalues, 
still without the norm boundedness in the trace class setting.  
We  generalize the results to the compact setup, but then we need the norm boundedness.

First, we introduce the tool called the Weingarten calculus,  summarizing results in \cite{CoSn2006} (see also the 
references \cite{Collins2003} and \cite[Lecture 23]{NiSp2006}). 
\subsection{The Weingarten calculus}
Let $S_{I}$ be the symmetric group acting on a finite set $I$ and in particular let $S_k$ be the symmetric group 
$S_{\{1,2,\dots, k\}}$. The identity of $S_{k}$ is denoted by $1_{k}$. 
Let $\bE$ be an expectation in a probability space and let $U=\left(u_{ij}\right)_{i,j=1}^{n}$ be a normalized Haar unitary 
random matrix, i.e.\ the law of $U$ is the normalized Haar measure on the unitary group $U_n$. Let $\cE$ be a linear 
map of $M_n(\C)^{\otimes k}$ defined by 
\be
\cE(A) =  \bE[U^{\otimes k} A (U^\ast)^{\otimes k}].  
\ee
Let $\{\delta_\sigma\}_{\sigma \in S_k}$ be the canonical basis of $\C[S_k]$ and let $\Phi$ be a linear map from $M_n(\C)^{\otimes k}$ to $\C[S_k]$ defined by 
\be
\Phi(A) = \sum_{\sigma\in S_k} \Tr_{M_n(\C)^{\otimes k}}( \rho(\sigma)^\ast A ) \delta_\sigma,  
\ee 
where $\rho: S_k \to M_n(\C)^{\otimes k}$ is the natural representation
\be
(\rho(\sigma))(v_1 \otimes \cdots \otimes v_k) := v_{\sigma^{-1}(1)}\otimes \cdots \otimes v_{\sigma^{-1}(k)}, \qquad v_i\in \C^n. 
\ee 
For $\sigma\in S_{k}$ and $n\geq k$, 
we define the Weingarten function $\Wg(\sigma,n)$ as
\be
\Wg(\sigma, n) = \bE[u_{11}\dots u_{k k}\overline{u}_{1\sigma(1)}\dots\overline{u}_{k\sigma(k)}]. 
\ee
It follows from \cite[Eq.\ (9)]{CoSn2006} that the function $\sigma\mapsto\Wg(\sigma,n), S_k \to \C$ is a linear combination 
of the characters of irreducible representations of $S_k$, and so $\Wg(\tau \sigma \tau^{-1}, n)=\Wg(\sigma,n)$. The conjugate 
classes of a symmetric group are determined by the structure of cycle decomposition. Since $\sigma$ and $\sigma^{-1}$ have 
the same structure of cycle decomposition, we have $\Wg(\sigma^{-1},n)=\Wg(\sigma,n)$ for every $\sigma \in S_k$.

What is important is asymptotics of the Weingarten function for large $n$. 
Let $\Cat_p$ be the Catalan number 
\be
\Cat_p:= \frac{(2 p)!}{p!(p+1)!},\qquad p \in\N.   
\ee
For $\sigma \in S_k$ let $|\sigma|$ be the length function, that is, the minimal number of transpositions to express $\sigma$ as the product of them. 
Let $\sigma = c_1 \cdots c_{\ell(\sigma)}$ be the cycle decomposition of $\sigma \in S_k$ and then let 
\be\label{Moeb}
\Moeb(\sigma):= \prod_{i=1}^{\ell(\sigma)}  (-1)^{|c_i|}\Cat_{|c_i|}.   
\ee
Note that $|\sigma|= \sum_{i=1}^{\ell(\sigma)} |c_i|$ and $\ell(\sigma) = k- |\sigma|$.  Then it is known that \cite[Corollary 2.7]{CoSn2006}
\be\label{eq:decay}
\Wg(\sigma,n) = n^{-k-|\sigma|}(\Moeb(\sigma)+O(n^{-2})). 
\ee

The Weingarten function can obviously be regarded as the element of $\C[S_k]$ 
\be
\Wg= \sum_{\sigma \in S_k} \Wg(\sigma, n)\delta_\sigma. 
\ee
It was shown in \cite{CoSn2006} that for all $A, B \in M_n(\C)^{\otimes k}$
\be\label{Wg1}
\Phi(A \cE(B)) = \Phi(A) \Phi(B) \Wg.  
\ee
In this paper we  consider $A=A_1\otimes \cdots \otimes A_k,B=B_1\otimes \cdots \otimes B_k$ where 
$A_i, B_i \in M_n(\C), i=1,\dots,k$. For a cycle $c=(i_1i_2 \dots i_m)$, let $A_c$ be the product 
$A_{i_1} \cdots A_{i_m}$ and let $\Tr_\sigma$ be the product of traces $\Tr_n$ according to the cycle 
decomposition $\sigma=c_1 c_2\cdots c_{\ell(\sigma)}$, 
\be
\Tr_\sigma(A_1,\dots, A_k)= \prod_{i=1}^{\ell(\sigma)} \Tr_n(A_{c_i}). 
\ee 
For example if $\sigma = (1 , 3) (2)$ then $\Tr_\sigma (A_1,A_2,A_3)= \Tr_n(A_1A_3)\Tr_n(A_2)$. 
Note that this notation is well-defined thanks to the cyclic property of the trace. 
We can then show that 
\be
\Tr_{M_n(\C)^{\otimes k}}( \rho(\sigma)^\ast A ) = \Tr_\sigma(A_1,\dots, A_k). 
\ee 
Then \eqref{Wg1} reads, for every $\sigma\in S_k$,  
\be\label{Wg2}
\begin{split}
&\bE[\Tr_\sigma\!\left(A_1 U B_1 U^*,  \cdots, A_k U B_k U^*\right)] \\
&\qquad= \sum_{\substack{\sigma_1,\sigma_2,\sigma_3 \in S_k \\ \sigma_1 \sigma_2\sigma_3 =\sigma}} \Tr_{\sigma_1}(A_1,\dots, A_k) \Tr_{\sigma_2}(B_1,\dots, B_k) \Wg(\sigma_3,n). 
\end{split}
\ee
This formula is the main tool of our analysis below. 

\subsection{Asymptotic cyclic monotone independence of random matrices} 
We prove asymptotic cyclic monotone independence on average in the trace class setup. Note that we can 
replace $(\{A_1, \dots,A_k\}, \{UB_1U^*, \dots, UB_\ell U^*\})$  with $(\{U A_1 U^*, \dots, U A_kU^*\}, \{B_1, \dots, B_\ell\})$ 
in the following asymptotic results. 
\begin{theorem} \label{ACM}
Let $U=U(n)$ be an $n\times n$ Haar unitary and $A_i=A_i(n), B_j=B_j(n), i=1,\dots,k$, $j=1,\dots, \ell$ be $n\times n$ random 
matrices. Suppose that 
\begin{enumerate}[\rm(1)]
\item\label{A11} $((A_1,\dots, A_k), \bE\otimes \Tr_n)$ converges in distribution to a $k$-tuple of trace class operators as $n\to\infty$,  
\item\label{B11} $((B_1,\dots, B_\ell), \bE\otimes \tr_n)$ converges in distribution to an $\ell$-tuple of elements in a non-commutative probability space as $n\to\infty$,  
\item $\{A_1,\ldots ,A_k\}, \{B_1,\ldots ,B_\ell\}, U$ are independent. 
\end{enumerate}
Then the pair $(\{A_i\}_{i=1}^k, \{U B_jU^*\}_{j=1}^\ell)$ is asymptotically cyclically monotone with respect to $(\bE\otimes \Tr_n,\bE \otimes \tr_n)$. 
\end{theorem}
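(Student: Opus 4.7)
The plan is to apply the Weingarten formula \eqref{Wg2} with $\sigma$ chosen to be a single long cycle, and to isolate the unique leading-order contribution. First I would reduce to an alternating word: any monomial in the $A_i$ and $UB_jU^*$, after collapsing consecutive factors of the same type (using $UB_iU^*\cdot UB_jU^* = UB_iB_jU^*$) and exploiting the cyclic property of $\Tr_n$, is either a pure $A$-monomial—whose convergence of $\bE\otimes\Tr_n$ to $\omega$ follows directly from assumption \eqref{A11}—or an alternating word
$$\tilde A_1\, U\tilde B_1 U^*\, \tilde A_2\, U\tilde B_2 U^* \cdots \tilde A_p\, U\tilde B_p U^*,$$
where $\tilde A_i\in\alg\{A_1,\ldots,A_k\}$ and $\tilde B_i\in\alg\{1,B_1,\ldots,B_\ell\}$. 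The hypothesis $P(0,\ldots,0,y)=0$ in the definition of asymptotic cyclic monotonicity rules out pure $B$-monomials, which is essential because those carry $\bE\otimes\Tr_n$-moments that blow up like $n$.

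Applying \eqref{Wg2} with $\sigma = (1\,2\,\cdots\,p)$ and using independence of $\{A_i\}$ from $\{B_j\}$ one obtains
$$\bE\!\left[\Tr_n\!\left(\tilde A_1 U\tilde B_1 U^* \cdots \tilde A_p U\tilde B_p U^*\right)\right] = \sum_{\sigma_1\sigma_2\sigma_3=(1\,\cdots\,p)} \bE[\Tr_{\sigma_1}(\tilde A_\bullet)]\,\bE[\Tr_{\sigma_2}(\tilde B_\bullet)]\,\Wg(\sigma_3,n).$$
The orders of magnitude are: $\bE[\Tr_{\sigma_1}(\tilde A_\bullet)] = O(1)$ with limit $\prod_{c}\omega(\tilde a_c)$ by \eqref{A11}; $\bE[\Tr_{\sigma_2}(\tilde B_\bullet)] = n^{\ell(\sigma_2)}\prod_c(\tau(\tilde b_c)+o(1))$ since each trace factor is $n\,\tr_n(\cdot)$ and \eqref{B11} applies; and $\Wg(\sigma_3,n) = n^{-p-|\sigma_3|}(\Moeb(\sigma_3)+O(n^{-2}))$ by \eqref{eq:decay}. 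Using $\ell(\sigma_2)+|\sigma_2|=p$, a generic summand is $O(n^{-|\sigma_2|-|\sigma_3|})$.

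The unique triple with vanishing total length is $\sigma_2=\sigma_3=1_p$, which then forces $\sigma_1=(1\,\cdots\,p)$; since $\Moeb(1_p)=1$, this term contributes
$$\bE[\Tr_n(\tilde A_1\cdots\tilde A_p)] \prod_{i=1}^p \bE[\tr_n(\tilde B_i)]\cdot(1+O(n^{-2}))\ \xrightarrow[n\to\infty]{}\ \omega(\tilde a_1\cdots\tilde a_p)\prod_{i=1}^p\tau(\tilde b_i),$$
which is precisely the cyclic monotone factorization of Definition \ref{construction}. All other triples are $O(n^{-1})$ and drop out, and the limiting tuple is realized inside the universal cyclic monotone product $(\cA\ast\cB,\omega\tri\tau)$ built from the limit trace class data $(a_1,\dots,a_k)$ and the limit non-commutative probability data $(b_1,\dots,b_\ell)$. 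The main obstacle is essentially bookkeeping: checking uniqueness of the leading triple (via the length identity $|\sigma_1|+|\sigma_2|+|\sigma_3|\geq p-1$, with equality characterizing the ``non-crossing'' factorizations) and verifying that the reductions to alternating form together with cyclic rotations cover every monomial in $P$ uniformly.
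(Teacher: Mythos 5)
Your proposal is correct and follows essentially the same route as the paper: reduce to an alternating word via traciality, apply the Weingarten expansion \eqref{Wg2} over factorizations $\sigma_1\sigma_2\sigma_3 = Z$, observe that the generic summand is $O(n^{-|\sigma_2|-|\sigma_3|})$, and isolate $\sigma_2=\sigma_3=1$, $\sigma_1=Z$ as the unique surviving term. The only cosmetic difference is that you spell out the collapsing of consecutive $UB_jU^*$ factors and mention the length-inequality/non-crossing characterization, which the paper leaves implicit.
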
 
\begin{proof}
We may assume that $\ell=k$ since otherwise we may add $0$'s to $A_i$'s or the identity matrices to $B_j$'s. 
For simplicity $U B_i U^*$ is abbreviated to $B_i$. Note that it suffices to show that the expectation
\be\label{eq222}
\bE[\Tr_n(A_1B_1\ldots A_kB_k)] 
\ee
factorizes following cyclic monotone independence, since a general monomial 
can be written in the form $B_0' A_1' \cdots A_m' B_{m+1}'$ where 
$A_i' \in \alg\{A_1, \cdots, A_k\}$ and $B_i' \in \alg\{1_n, B_1,\dots, B_k\}$. By the traciality, the distribution of 
$B_0' A_1'B_1' \cdots A_m' B_{m+1}'$ with respect to the non-normalized trace is the same as that of 
$A_1'B_1' \cdots A_m' B_{m+1}' B_0'$, so every monomial reduces to \eqref{eq222}. 
Also, a concrete construction of a limiting non-commutative probability space with a tracial weight can be given by the 
cyclic monotone product in Definition \ref{construction}. Thus, it suffices to show the factorization of the expectation \eqref{eq222}.

Let $Z$ be the circular permutation, i.e.\ $Z=(1\ldots k)$. In this context, by the Weingarten formula \eqref{Wg2} we have
\be
\begin{split}
&\bE[\Tr_n(A_1B_1\cdots A_kB_k)]\\
&\qquad=
\sum_{\substack{\sigma_1,\sigma_2,\sigma_3\in S_k\\ \sigma_1\sigma_2\sigma_3=Z}}\bE[\Tr_{\sigma_1}(A_1,\dots, A_k)]\bE[\Tr_{\sigma_2}(B_1,\dots, B_k)] \Wg(\sigma_3,n). 
\end{split}
\ee
Next we make a decay analysis. 
Since $(A_1,\dots, A_k)$ has a limiting trace class distribution, then the leading behavior of 
$\Tr_{\sigma_1}(A_1,\dots,A_k)$ is $O(n^0)$.
Since $(B_1,\dots, B_k)$ has a limiting distribution with respect to the normalized trace,  the behavior of 
$\Tr_{\sigma_2}(B_1,\dots, B_k)$ is $O(n^{k-|\sigma_2|})$.
Finally, we know that $\Wg(\sigma_3,n)$ behaves as $O(n^{-k-|\sigma_3|})$. Therefore, for a triple $\sigma_1,\sigma_2,\sigma_3$ such that 
$\sigma_1\sigma_2\sigma_3=Z$, the contribution of the summand is
$O(n^{-|\sigma_2|-|\sigma_3|})$.
Therefore, the asymptotics is driven by the summands for which $|\sigma_2|=|\sigma_3|=0$ i.e. $\sigma_2=\sigma_3=1_k$. This forces $\sigma_1=Z$, and there is only one such summand. As a conclusion, 
\be
\bE[\Tr_n(A_1B_1\cdots A_kB_k)]= \bE[\Tr_n(A_1 A_2 \cdots A_k)] \prod_{i=1}^k \bE[\tr_n(B_i)]+O(n^{-1}).
\ee
\end{proof}

\subsection{Almost sure convergence}

Some further analysis shows that
$$\mathrm{Cov}(\Tr (A_1B_1\ldots A_kB_k))=O(n^{-1}).$$
Interestingly, this is quite different from classical random matrix models inspired from free probability theory
\cite{Collins2003, CoSn2006}, where the covariance behaves rather like $O(n^{-2})$. 
As it is classically known, a behavior of $O(n^{-2})$, summable in $n$, allows to prove the almost sure convergence of the 
traces of random matrix models. 
Here, since we do not have the behavior $O(n^{-2})$, we need to investigate other functionals. 

From now on, we assume that the matrices $A_i, B_i$ are deterministic, but by conditioning $A_i,B_i$ to be constant, we can 
generalize the results to the case when $A_i, B_i$ are random matrices independent of the Haar unitary $U$ such 
that $((A_1,\dots, A_k), \Tr_n)$ and $((B_1,\dots, B_k), \tr_n)$ almost surely converge in distributions to deterministic elements. 
For example this allows us to take $B_1= \cdots = B_\ell$ to be a GUE $G$, and in this case we do not need to rotate 
$G$ by a Haar unitary since $G$ is rotationally invariant.

\begin{lemma}\label{ACM2} 
Let $A_i=A_i(n), B_i=B_i(n), i=1,\dots, k,$ be $n\times n$ deterministic matrices and let $U= U(n)$ be a Haar unitary 
random matrix. Suppose that 
\begin{enumerate}[\rm(1)]
\item\label{A12} $((A_1,\dots, A_k), \Tr_n)$ converges in distribution to a $k$-tuple of trace class operators as $n\to\infty$,  
\item\label{B12} $((B_1,\dots, B_k), \tr_n)$ converges in distribution to a $k$-tuple of elements in a non-commutative probability 
space as $n\to\infty$. 
\end{enumerate}
Then 
$$
\bE\!\left[ \left| \Tr_n (A_1 U B_1 U^* \cdots A_k UB_kU^*)-\bE[\Tr_n (A_1 UB_1U^*\cdots A_k UB_kU^*)] \right|^4\right]=O(n^{-2}). 
$$
\end{lemma}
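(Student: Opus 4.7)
Set $X := \Tr_n(A_1 U B_1 U^* \cdots A_k U B_k U^*)$ and $Y := X - \bE X$. Since $Y$ is centered, the general moment--cumulant identity for four (possibly complex) centered random variables gives
$$
\bE[|Y|^4] = \bE[Y^2 \bar Y^2] = \bE[Y^2]\,\bE[\bar Y^2] + 2\bigl(\bE[|Y|^2]\bigr)^2 + \kappa_4(Y,Y,\bar Y,\bar Y),
$$
where the three pair-partitions of $\{Y,Y,\bar Y,\bar Y\}$ produce the three quadratic terms. It therefore suffices to prove (i) $\bE[|Y|^2] = O(n^{-1})$, (ii) $\bE[Y^2] = \bE[X^2]-(\bE X)^2 = O(n^{-1})$, and (iii) $\kappa_4(Y,Y,\bar Y,\bar Y) = O(n^{-2})$.

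Estimates (i) and (ii) are two-factor analogues of the computation in the proof of Theorem~\ref{ACM}. Writing the joint Haar moment at the level of matrix entries, one expands $\bE[X\bar X]$ (respectively $\bE[X^2]$) by the Weingarten formula \eqref{Wg2} as a sum over triples $(\sigma_1,\sigma_2,\sigma_3) \in S_{2k}^3$ with $\sigma_1\sigma_2\sigma_3 = \widetilde Z$, where $\widetilde Z$ is now the product of two disjoint $k$-cycles (one per trace-loop) and the $A_\bullet, B_\bullet$ lists absorb the conjugation pattern of $\bar X$. Under the hypotheses \eqref{A12} and \eqref{B12}, each summand is of order $n^{-|\sigma_2|-|\sigma_3|}$, and the unique $O(n^0)$ contribution (forced by $\sigma_2=\sigma_3=1$, hence $\sigma_1 = \widetilde Z$) reproduces $|\bE X|^2$ or $(\bE X)^2$ exactly. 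Subtracting these gives (i) and (ii).

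For (iii), we expand $\bE[X^2\bar X^2]$ by the Weingarten formula in $S_{4k}$, with $\widetilde Z$ now a product of four disjoint $k$-cycles indexing the four trace-loops. Call a triple $(\sigma_1,\sigma_2,\sigma_3)$ \emph{cluster-reducible} if both $\sigma_2$ and $\sigma_3$ preserve the partition of $\{1,\ldots,4k\}$ into the four $k$-blocks. Since $\Wg$ depends only on the cycle type and both $\Tr_{\sigma_1}$ and $\Tr_{\sigma_2}$ factor along cycle decompositions, a cluster-reducible summand factors as a product of Weingarten sums over the blocks it fuses; summing over all cluster-reducible configurations reassembles precisely the pair-partition terms $\bE[Y^2]\bE[\bar Y^2] + 2(\bE[|Y|^2])^2$, which therefore cancel inside $\kappa_4$. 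The remaining ``cluster-connected'' summands have at least one cycle of $\sigma_3$ (or of $\sigma_2$) spanning two distinct clusters, which forces $|\sigma_2|+|\sigma_3| \geq 2$. Invoking the Weingarten decay $\Wg(\sigma_3,n) = O(n^{-4k-|\sigma_3|})$ together with the bounds $\Tr_{\sigma_1}(A_\bullet) = O(1)$ and $\Tr_{\sigma_2}(B_\bullet) = O(n^{4k-|\sigma_2|})$ from the hypotheses, each such summand is $O(n^{-2})$, and the total number of triples is bounded in terms of $k$ alone.

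The main obstacle is the combinatorial identification in step (iii): verifying that the cluster-reducible Weingarten summands assemble exactly into the reducible part of $\bE[X^2\bar X^2]$. This is a bookkeeping exercise --- one groups summands by the partition of $\{1,2,3,4\}$ induced by the cluster-orbits of $\sigma_3$ and checks each case --- which is delicate because the four clusters come in a mixed pattern of $X$- and $\bar X$-factors and the matching between $u$'s and $\bar u$'s must be tracked accordingly. Once this factorization is established, the bound $O(n^{-2})$ on $\kappa_4$ follows from the $|\sigma_2|+|\sigma_3| \geq 2$ estimate, completing the proof.
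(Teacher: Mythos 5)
Your plan — pass through the fourth cumulant $\kappa_4(Y,Y,\bar Y,\bar Y)$ and bound the pair-partition pieces by $(O(n^{-1}))^2$ — is reasonable, and estimates (i), (ii) are straightforward two-copy versions of Theorem~\ref{ACM}. The paper does not phrase the proof in terms of cumulants; it expands $\bE[\mathring X_1\mathring X_2\mathring X_3\mathring X_4]$ directly as $\sum_{A\subset\{1,2,3,4\}}\bE_A$, but that is only a cosmetic difference.

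The genuine gap is in step (iii), and it occurs at exactly the point you flag as ``a bookkeeping exercise.'' First, your claim that a cluster-connected triple (one where $\sigma_2$ or $\sigma_3$ fails to preserve the four $k$-blocks) ``forces $|\sigma_2|+|\sigma_3|\geq2$'' is false: take $\sigma_2=\mathrm{id}$ and $\sigma_3$ a single transposition joining a point of $I_1$ to a point of $I_2$, so that $|\sigma_2|+|\sigma_3|=1$ and the corresponding Weingarten summand is only $O(n^{-1})$, not $O(n^{-2})$. Such triples induce a partition of $\{1,2,3,4\}$ like $\{\{1,2\},\{3\},\{4\}\}$, and for them the needed extra power of $n^{-1}$ does \emph{not} come from length counting; it comes from the cancellation, at leading order, of the signed sum of Weingarten functions over the relevant subsets $A$, which holds because $\Moeb$ is multiplicative over the factors of $\sigma_3$ restricted to the equivalence classes (this is the paper's case (iv), and case (v) is the analogous statement for the discrete partition with all sixteen $f_A$ contributing). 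Second, your claim that the cluster-reducible summands ``reassemble precisely'' into the reducible part of $\bE[X^2\bar X^2]$ is not exact either: $\Wg(\sigma_3,n)$ on $S_{4k}$ does not factor as $\prod_i\Wg(\sigma_{3|I_i},n)$ on $S_k$'s — they agree only at leading order, and the discrepancy is again controlled by the $O(n^{-2})$ correction in \eqref{eq:decay}. That particular imprecision is harmless for the final bound, but it is symptomatic of the same issue: the identity you want to invoke is only asymptotic, and the error has to be tracked.

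So what you describe as bookkeeping is in fact the crux. A correct execution of step (iii) must: group summands by the set partition $\pi(\sigma_1,\sigma_2,\sigma_3)$ of $\{1,2,3,4\}$ induced by the orbits of $\langle Z^{\cup 4},\sigma_2,\sigma_3\rangle$ acting on the four $k$-blocks; observe that for the transitive and two-block partitions, $|\sigma_2|+|\sigma_3|\geq 2$ directly; and for partitions with three or four blocks, verify that the signed combination of Weingarten functions in $f(\sigma_1,\sigma_2,n)=\sum_A f_A(\sigma_1,\sigma_2,n)$ vanishes at the leading order $n^{-4k-|\sigma_3|}$ by multiplicativity of $\Moeb$ over restrictions, which is precisely what the paper does. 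Without that cancellation the bound $\kappa_4=O(n^{-2})$ does not follow from the estimates you have written.
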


\begin{proof}
We prove a more general result, namely, instead of taking $k$ matrices $A_i$ and $k$ matrices
$B_i$, we take $4k$ matrices for each kind satisfying the same assumption. Moreover, for notational simplicity $U B_i U^*$ is 
abbreviated to $B_i$. Let 
\begin{align}
&X_i=  \Tr_n\! \left(A_{(i-1)k+1} B_{(i-1)k+1} \cdots A_{(i-1)k +k} B_{(i-1)k+k}\right), \qquad i=1,2,3,4,  \\
&\mathring{X}_i= X_i - \bE[X_i], 
\end{align}
which are complex-valued random variables. We prove that 
\be\label{eq:power4}
\begin{split}
& \bE[ \mathring{X}_1 \mathring{X}_2 \mathring{X}_3 \mathring{X}_4]=O(n^{-2}). 
\end{split}
\ee 
This implies the lemma if we define $X_2,X_3,X_4$ such that $X_3=X_1, X_4 = X_2=\overline{X_1}$. The last condition $X_2=\overline{X_1}$ can be realized by taking 
$A_{k+1}= A_k^*, B_{k+1}= B_{k-1}^*, \dots, A_{2k}=A_1^*, B_{2k}=B_k^*$,  since 
\be
\begin{split}
\overline{X_1} 
&= \Tr_n\!\left((A_{1} B_{1} \cdots A_{k} B_{k})^*\right)= \Tr_n(A_{k}^* B_{k-1}^* A_{k-1}^*  \cdots A_2^* B_1^* A_1^* B_{k}^*). 
\end{split}
\ee

We shall denote by $I_i$ the interval $\{(i-1)k+1 ,(i-1)k +2 ,\ldots ,(i-1)k+k\},$ by $Z_i$ the cyclic 
permutation $((i-1)k+1 ,(i-1)k +2 ,\ldots ,(i-1)k+k)$ of $S_{I_i}$ and by $Z^{\cup 4}$ the permutation $Z_1Z_2Z_3Z_4$ of $S_{4 k}$. 
Thanks to the Weingarten formula \eqref{Wg2} we have formulas to compute the moments 
$\bE[ \mathring{X}_1 \mathring{X}_2 \mathring{X}_3 \mathring{X}_4]$. 
If we expand this mixed moment, we obtain $16$ (products of) expectations. 
Our notation is 
\begin{align}
\bE[ \mathring{X}_1 \mathring{X}_2 \mathring{X}_3 \mathring{X}_4]=\sum_{A \subset \{1,2,3,4\} }\bE_A, \\
\bE_A= (-1)^{\# A} \bE\!\left[\prod_{i\in A} X_i\right] \prod_{i \in A^c}\bE[ X_i]. 
\end{align}
For example, our notation means $\bE_{\{1,3,4\}} = -\bE[X_1 \bE[X_2] X_3 X_4]$. 
In this specific example, the Weingarten formula 
boils down to a sum over permutations in $S_{I_1 \cup I_3 \cup I_4}$ for the evaluation of  $\bE [X_1X_3X_4]$, 
and those in $S_{I_2}$ for the evaluation of $\bE [X_2]$.

It follows from the Weingarten formula that for each $A\subset \{1,2,3,4\}$ there exists a
number  $f_A (\sigma_1,\sigma_2,n)$ such that
\be
\bE_A=\sum_{\substack{\sigma_1,\sigma_2,\sigma_3\in S_{4k} \\
\sigma_1\sigma_2\sigma_3=Z^{\cup 4}}}\Tr_{\sigma_1}(A_1,\dots, A_{4k})\Tr_{\sigma_2}(B_1,\dots, B_{4k})f_A (\sigma_1,\sigma_2,n).
\ee
Note that $\sigma_3$ may be omitted but it is written for clearer understanding. The number $f_A$ is either zero, or a product 
of (two, three or four) Weingarten functions. Again, instead of going through heavy notation that would perhaps not ease 
the understanding, let us describe $f_A (\sigma_1,\sigma_2,n)$ in the generic case $A=\{1,3,4\}$. Recall that 
$\bE_A = -\bE[X_2]\bE[X_1 X_3 X_4]$. The product of Weingarten formulas \eqref{Wg2} for $\sigma= Z_2$ and for 
$\sigma =Z_1 Z_3 Z_4$ only gives  permutations of $S_{I_2} \times S_{I_1 \cup I_3 \cup I_4}$, and so, for 
$\sigma_1,\sigma_2,\sigma_3 \in S_{4 k}$ such that $\sigma_1\sigma_2\sigma_3=Z^{\cup 4}$, 
\be
f_A (\sigma_1,\sigma_2,n)
=
\begin{cases}
- \Wg (\sigma_{3| I_2},n)\Wg (\sigma_{3| I_1 \cup I_3 \cup I_4},n), \\ \qquad\qquad \text{if $\sigma _1, \sigma_2$ leave 
$I_2$ and $I_1 \cup I_3 \cup I_4$ invariant},  \\
0, \qquad\quad \text{otherwise}. 
\end{cases}
\ee
The general case is alike, and hence  we can define a function $f_A (\sigma_1,\sigma_2,n)$ for every subset $A$. 
This gives us the expression 
\be
\begin{split}
&\bE[ \mathring{X}_1 \mathring{X}_2 \mathring{X}_3 \mathring{X}_4]
=\sum_{\substack{\sigma_1,\sigma_2,\sigma_3\in S_{4k} \\
\sigma_1\sigma_2\sigma_3=Z^{\cup 4}}}\Tr_{\sigma_1}(A_1,\dots, A_{4k})\Tr_{\sigma_2}(B_1,\dots, B_{4k}) f (\sigma_1,\sigma_2,n),
\end{split}
\ee 
where 
\be\label{coefficients}
f (\sigma_1,\sigma_2,n)= \sum_{A\subset \{1,2,3,4\} }f_A (\sigma_1,\sigma_2,n).
\ee
As seen above, $f (\sigma_1,\sigma_2,n)$ is a signed sum of products of Wg functions on various permutation groups.

By assumption, $\Tr_{\sigma_1}(A_1,\dots, A_{4k})=O(1)$ as $n\to\infty$ and $\Tr_{\sigma_2}(B_1,\dots, B_{4k})= O(n^{\ell(\sigma_2)})=O(n^{4k-|\sigma_2|})$. From \eqref{eq:decay} it follows that 
\be
\Tr_{\sigma_2}(B_1,\dots, B_{4k}) f (\sigma_1,\sigma_2,n)=O(n^{-|\sigma_2|-|\sigma_3|}).
\ee  
We show that actually
\be\label{eq:decay2}
\Tr_{\sigma_2}(B_1,\dots, B_{4k}) f (\sigma_1,\sigma_2,n)=O(n^{-2}),  
\ee 
or equivalently 
\be
f (\sigma_1,\sigma_2,n)=O(n^{-4 k + |\sigma_2|-2}) 
\ee
for each fixed $\sigma_1, \sigma_2~(\text{and}~\sigma_3) \in S_{4k}$ (such that $\sigma_1\sigma_2\sigma_3=Z^{\cup4}$). 

Given $\sigma_1,\sigma_2,\sigma_3 \in S_{4 k}$ such that $\sigma_1\sigma_2\sigma_3 =Z^{\cup 4}$, we define an equivalence 
relation on $\{1,\dots, 4k\}$: $i \sim j$ if there exists $ \tau \in {\rm Grp}\langle \sigma_1,\sigma_2,\sigma_3\rangle$ such that 
$\tau(i)=j$. Since this group contains $Z^{\cup 4}$, every interval $I_i$ must be a subset of some equivalence class. 
Then, the permutations $\sigma_1,\sigma_2,\sigma_3$ associate a set partition 
$\pi(\sigma_1,\sigma_2,\sigma_3)=\{P_1,\dots, P_m\}$ of $\{1,2,3,4\}$ such that the subsets of $\{1,\dots, 4 k\}$, 
\be
\bigcup_{i \in P_1} I_i,\bigcup_{i \in P_2} I_i,\dots, 
\ee
are exactly the equivalence classes generated by the actions of $\sigma_1,\sigma_2,\sigma_3$. A subset $A$ of 
$\{1,2,3,4\}$ also associates the set partition $\pi(A)=\{A, \{b\}: b \in A^c\}$ of $\{1,2,3,4\}$. Then only $f_A$ for 
which $\pi(A)$ is coarser than or equal to $\pi(\sigma_1,\sigma_2,\sigma_3)$ contributes to $f$ in the 
sum \eqref{coefficients} and the other $f_A$'s are zero.  We distinguish several cases according to the set partition 
associated to $\sigma_1,\sigma_2,\sigma_3$. 

\begin{enumerate}[\rm(i)]
\item  $\pi(\sigma_1,\sigma_2,\sigma_3)=\{\{1,2,3,4\}\}$, or equivalently, the group 
${\rm Grp}\langle \sigma_1,\sigma_2,\sigma_3\rangle$ acts on $\{1,2,\dots,4k\}$ transitively. In this case, $f_A$ is zero unless $A=\{1,2,3,4\}$, and so $f=f_{\{1,2,3,4\}}=\Wg(\sigma_3,n)$. We have to exclude $|\sigma_2|+|\sigma_3|=0$, as the condition 
$\sigma_1\sigma_2\sigma_3=Z^{\cup 4}$ contradicts transitivity. We can also exclude $|\sigma_2|+|\sigma_3|=1$. Indeed this means that
one is the identity, and the other one is a transposition, and again the condition 
$\sigma_1\sigma_2\sigma_3=Z^{\cup 4}$ is incompatible with a transitive action. 
Therefore, $|\sigma_2|+|\sigma_3|\geq 2$, and so $\Tr_{\sigma_2}(B_1,\dots, B_{4k})f (\sigma_1,\sigma_2,n)=O(n^{-2})$.

\item $\pi(\sigma_1,\sigma_2,\sigma_3)$ is a pair partition, e.g.\ $\{\{1,2\}, \{3,4\}\}$. In this case again we have 
$
f= f_{\{1,2,3,4\}} = \Wg(\sigma_3,n),  
$
and from a similar reasoning we must have $|\sigma_2|+|\sigma_3| \geq2$ and hence 
$\Tr_{\sigma_2}(B_1,\dots, B_{4k})f (\sigma_1,\sigma_2,n)=O(n^{-2})$. 
\end{enumerate}

In the other cases some nice cancellation occurs between Wg functions. 
\begin{enumerate}[\rm(i)]
\setcounter{enumi}{2}
\item $\pi(\sigma_1,\sigma_2,\sigma_3)$ has two blocks with cardinality 1 and 3, say $\{\{1,3,4\},\{2\}\}$. The equivalence classes are 
$I_1 \cup I_3 \cup I_4$ and $I_2$. 
The only indices $A$'s for which $f_A$ is non-zero are $A=\{1,3,4\}, \{1,2,3,4\}$. By inspection we see that  
\begin{align*}
&f_{\{1,3,4\}}=-\Wg(\sigma_{3| I_2 },n) \Wg (\sigma_{3| I_1 \cup I_3 \cup I_4 },n),  \\
& f_{\{1,2,3,4\}}=\Wg(\sigma_3,n). 
\end{align*}
By \eqref{eq:decay} and the multiplicativity \eqref{Moeb} of Moebius functions, we obtain 
\[
\begin{split}
f&= f_{\{1,3,4\}}+f_{\{1,2,3,4\}} \\
&= n^{- 4 k -|\sigma_3|}(- \Moeb(\sigma_{3| I_2 }) \Moeb(\sigma_{3| I_1 \cup I_3 \cup I_4 })+\Moeb(\sigma_3)+O(n^{-2})) \\
&= O(n^{-4k-|\sigma_3|-2}). 
\end{split}
\]
Thus $\Tr_{\sigma_2}(B_1,\dots, B_{4k})f (\sigma_1,\sigma_2,n)=O(n^{-|\sigma_2|-|\sigma_3|-2}).$ 

\item $\pi(\sigma_1,\sigma_2,\sigma_3)$ has 3 blocks, say $\{\{1\},\{2\}, \{3,4\}\}$. 
The indices $A$'s for which $f_A$ is non-zero are $A=\{3,4\}, \{1,3,4\}, \{2,3,4\}, \{1,2,3,4\}$. We see that  
\[
\begin{split}
f
=& f_{\{3,4\}}+ f_{\{1,3,4\}}+ f_{\{2,3,4\}}+f_{\{1,2,3,4\}} \\
=& \Wg(\sigma_{3| I_1 },n) \Wg(\sigma_{3| I_2 },n) \Wg (\sigma_{3| I_3 \cup I_4 },n)-\Wg(\sigma_{3| I_2 },n) \Wg (\sigma_{3| I_1 \cup I_3 \cup I_4 },n)\\
&-\Wg(\sigma_{3| I_1 },n) \Wg (\sigma_{3| I_2 \cup I_3 \cup I_4 },n) + \Wg(\sigma_3,n) \\
=& n^{- 4 k -|\sigma_3|}\big( \Moeb(\sigma_{3| I_1 }) \Moeb(\sigma_{3| I_2 }) \Moeb (\sigma_{3| I_3 \cup I_4 })-\Moeb(\sigma_{3| I_2 }) \Moeb (\sigma_{3| I_1 \cup I_3 \cup I_4 })\\
&-\Moeb(\sigma_{3| I_1 }) \Moeb (\sigma_{3| I_2 \cup I_3 \cup I_4 }) + \Moeb(\sigma_3) +O(n^{-2})\big) \\
=& O(n^{-4k-|\sigma_3|-2}). 
\end{split}
\]

\item $\pi(\sigma_1,\sigma_2,\sigma_3)=\{\{1\},\{2\},\{3\},\{4\}\}$, namely, every $I_i$ is invariant under the actions of 
$\sigma_1,\sigma_2,\sigma_3$. In this case $f_A$ contribute to $f$  for all the 16 subsets $A \subset \{1,2,3,4\}$. 
By multiplicativity, the dominant contribution to $f$ is the sum of 16 M\"obius functions 
$$
\pm \Moeb(\sigma_{3| I_1 })\Moeb(\sigma_{3| I_2 })\Moeb(\sigma_{3| I_3 })\Moeb(\sigma_{3| I_4 })
$$ 
multiplied by $n^{- 4 k -|\sigma_3|}$. Exactly half of them have minus signs, so they cancel. Thus $f = O(n^{-4k-|\sigma_3|-2})$. 
\end{enumerate}
This concludes the proof of \eqref{eq:decay2}.
\end{proof}

\begin{theorem}\label{ACM3}
Let $A_i=A_i(n), B_j=B_j(n), i=1,\dots, k,$ $j=1,\dots, \ell$ be $n\times n$ deterministic matrices and let $U= U(n)$ be a 
Haar unitary random matrix. Suppose that 
\begin{enumerate}[\rm(1)] 
\item\label{A13} $((A_1,\dots, A_k), \Tr_n)$ converges in distribution to a $k$-tuple of trace class operators as $n\to\infty$,  
\item\label{B13} $((B_1,\dots, B_\ell), \tr_n)$ converges in distribution to an $\ell$-tuple of elements in a non-commutative 
probability space as $n\to\infty$. 
\end{enumerate}
Then the pair $(\{A_i\}_{i=1}^k, \{U B_j U^*\}_{j=1}^\ell)$ is asymptotically cyclically monotone almost surely with respect to $(\Tr_n,\tr_n)$. 
\end{theorem}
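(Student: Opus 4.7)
The plan is to promote Theorem \ref{ACM} (convergence in expectation) to almost sure convergence, using the fourth-moment variance estimate of Lemma \ref{ACM2} together with Borel--Cantelli. As in the proof of Theorem \ref{ACM}, by the traciality of $\Tr_n$ and the factorization property in the definition of cyclic monotone independence, it suffices to handle moments of the canonical form
\be
T_n := \Tr_n\!\left(A_{i_1}(n) U B_{j_1}(n) U^* A_{i_2}(n) U B_{j_2}(n) U^* \cdots A_{i_k}(n) U B_{j_k}(n) U^*\right)
\ee
(with a choice of adjoints on each factor absorbed into the $A_i$'s and $B_j$'s). The normalized-trace side requires no work: since $U$ is unitary and $\tr_n$ is a trace, $\tr_n(P(UB_1U^*,\ldots, UB_\ell U^*)) = \tr_n(P(B_1,\ldots, B_\ell))$ deterministically for every $\ast$-polynomial $P$, so the convergence of the $\tr_n$-moments of $\{UB_jU^*\}_{j=1}^\ell$ follows from assumption \eqref{B13} without any probabilistic input.

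For the $\Tr_n$ side, fix such a canonical monomial. Lemma \ref{ACM2} (with the chosen $A_i$'s, $B_j$'s on input) gives
\be
\bE\!\left[|T_n - \bE[T_n]|^4\right] = O(n^{-2}),
\ee
and therefore Markov's inequality yields
\be
\Pr(|T_n - \bE[T_n]| > \varepsilon) \,\leq\, \varepsilon^{-4}\,\bE\!\left[|T_n - \bE[T_n]|^4\right] = O(n^{-2})
\ee
for every $\varepsilon>0$. The right-hand side is summable in $n$, so the Borel--Cantelli lemma gives $T_n - \bE[T_n] \to 0$ almost surely. Combined with Theorem \ref{ACM}, which identifies $\lim_{n\to\infty} \bE[T_n]$ with the corresponding moment of the cyclic monotone product $\omega \tri \tau$ on the limiting algebra, we conclude that $T_n$ converges almost surely to the correct cyclic monotone value.

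To obtain simultaneous convergence of every moment on a single event of probability one, observe that the indexing set of finite tuples $(k; i_1,\ldots, i_k; j_1,\ldots, j_k; \varepsilon_1,\ldots, \varepsilon_{2k}) \in \N \times \{1,\ldots,k\}^k \times \{1,\ldots,\ell\}^k \times \{1,\ast\}^{2k}$ is countable. Intersecting the countably many full-measure events on which each canonical moment converges produces a single full-measure event on which \emph{all} canonical moments converge. Since a general monomial in $\{A_i\}\cup\{UB_jU^*\}$ containing at least one $A_i$ reduces to a canonical one under $\Tr_n$ by cyclicity (the reduction argument used in the first paragraph of the proof of Theorem \ref{ACM}), this delivers the asymptotic cyclic monotone independence almost surely. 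The only genuinely non-routine ingredient is the improved variance bound of Lemma \ref{ACM2}; without the $O(n^{-2})$ decay obtained there through the delicate cancellation of Möbius functions, the naive $O(n^{-1})$ covariance would fail to be summable and Borel--Cantelli would not apply directly.
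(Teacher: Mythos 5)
Your proof is correct and takes essentially the same approach as the paper: reduce to the canonical monomial $T_n$, invoke Lemma \ref{ACM2} for the $O(n^{-2})$ fourth-moment bound, and deduce $T_n - \bE[T_n] \to 0$ almost surely from summability. The paper even remarks that ``one may use the standard Borel--Cantelli argument,'' but instead observes that $\bE\bigl[\sum_n |T_n - \bE T_n|^4\bigr] < \infty$ by monotone convergence, so the series is almost surely finite and its terms tend to zero --- a marginally shorter route to the same conclusion. Your explicit mention of the countable intersection over monomials and the deterministic nature of the $\tr_n$-moments are both fine (the latter is not strictly needed, since the definition of asymptotic cyclic monotonicity only constrains the $\Tr_n$-moments), but neither changes the substance.
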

\begin{proof}
For notational convenience we write $U B_i U^*$ simply as $B_i$. 
From the arguments in the proofs of Theorem \ref{ACM},  and it suffices to show that when $k=\ell$ 
\be
\lim_{n\to\infty}\Tr_n(A_1 B_1 \cdots A_k B_k ) = \lim_{n\to\infty} \Tr_n(A_1\cdots A_k) \prod_{i=1}^k \lim_{n\to\infty} \tr_n(B_i) \quad \text{a.s.}
\ee
One may use the standard Borel-Cantelli argument, but a simpler argument is possible.  
By Lemma \ref{ACM2} and by monotone convergence we have 
\be
\bE\left[\sum_{n=1}^\infty\left|\Tr_n(A_1B_1\cdots A_kB_k)-\bE[\Tr_n(A_1B_1\cdots A_kB_k)]\right|^4\right]<\infty,
\ee
and so 
\be
\sum_{n=1}^\infty\left|\Tr_n(A_1B_1\cdots A_kB_k)-\bE[\Tr_n(A_1B_1\cdots A_kB_k)]\right|^4 <\infty\quad \text{a.s.,}
\ee
which implies that $\lim_{n\to\infty}\left|\Tr_n(A_1B_1\cdots A_kB_k) - \bE[\Tr_n(A_1B_1\cdots A_kB_k)]\right|=0$ a.s.  By Theorem \ref{ACM}  we know that 
\be
\lim_{n\to\infty}\bE[\Tr_n(A_1B_1\cdots A_kB_k)] =  \lim_{n\to\infty} \Tr_n(A_1\cdots A_k) \prod_{i=1}^k \lim_{n\to\infty} \tr_n(B_i), 
\ee
so we get the conclusion. 
\end{proof}

Proposition \ref{prop2.7} implies the following. Note that we can only conclude that the limiting operator is Hilbert Schmidt. 

\begin{corollary}\label{convergence EV} Under the assumptions of Theorem \ref{ACM3}, for any selfadjoint $\ast$-polynomial 
$P(x_1,\dots, x_k, y_1,\dots, y_\ell)$ such that $P(0,\dots, 0,y_1,\dots, y_\ell)=0$, the Hermitian random matrix 
$P(A_1, \dots, A_k, U B_1 U^*,\ldots, U B_\ell U^*)$ converges in eigenvalues to a selfadjoint Hilbert Schmidt operator almost surely. 
The limiting eigenvalues can be computed by using cyclic monotone independence. 
\end{corollary}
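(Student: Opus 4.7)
The plan is to reduce the statement to Proposition~\ref{prop2.7} applied on a single full-measure event, and then read off the identification of the limit from Theorem~\ref{ACM3}. Set
\[
M_n := P(A_1,\dots, A_k, U B_1 U^*,\ldots, U B_\ell U^*),
\]
and observe that, because $P(0,\dots,0,y_1,\dots,y_\ell)=0$, every monomial of $P$ contains at least one factor $x_i$. Consequently every monomial in the expansion of $P^m$ also contains at least one $x_i$ for every $m\in\N$; thus the polynomial $Q_m(x_1,\dots,x_k,y_1,\dots,y_\ell):=P^m$ satisfies $Q_m(0,\dots,0,y_1,\dots,y_\ell)=0$, and by Theorem~\ref{ACM3} the non-normalized trace $\Tr_n(M_n^m)$ converges almost surely to a deterministic limit $\alpha_m$. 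Since this is a countable family of events (indexed by $m\in\N$), I can intersect the corresponding full-measure subsets to obtain a single event $\Omega_0$ of probability one on which $\Tr_n(M_n^m)\to \alpha_m$ simultaneously for every $m$.

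On that event $\Omega_0$ I now verify the hypotheses of Proposition~\ref{prop2.7} with $p=2$. Since $M_n$ is selfadjoint, one has $\|M_n\|_2^2=\Tr_n(M_n^2)\to \alpha_2$, so the Hilbert--Schmidt norms $\|M_n\|_2$ are bounded uniformly in $n$ on $\Omega_0$; in particular $M_n\in\cS^2(\C^n)$ uniformly. Combined with the almost sure convergence $\Tr_n(M_n^m)\to \alpha_m$ for every integer $m\geq 2$, Proposition~\ref{prop2.7} produces a separable Hilbert space $H$ and a selfadjoint $a\in\cS^2(H)$ such that $M_n\to a$ in eigenvalues and $\alpha_m = \Tr_H(a^m)$ for all $m\geq 3$.

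To identify the limiting eigenvalues via cyclic monotone independence, I invoke Theorem~\ref{ACM3} in its full strength: it provides a non-commutative probability space with tracial weight $(\cC,\omega,\tau)$ and elements $a_1,\dots,a_k\in\Dom(\omega)$, $b_1,\dots,b_\ell\in\cC$ such that $(\{a_i\},\{b_j\})$ is cyclically monotone and $\alpha_m = \omega(P(a_1,\dots,a_k,b_1,\dots,b_\ell)^m)$ for every $m$. Therefore the element $X:=P(a_1,\dots,a_k,b_1,\dots,b_\ell)\in\Dom(\omega)$ and the Hilbert--Schmidt operator $a$ share the same moments of order $\geq 3$, so Corollary~\ref{cor unique} (applied with exponent $3$) yields $\EV(X)=\EV(a)$. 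Hence the limiting eigenvalues of $M_n$ are exactly those of $X$, which can be computed by the cyclic monotone rules of Section~\ref{sec:cyclic-monotone} (and in the distinguished cases listed in point (iv) of the introduction, explicitly by Theorem~\ref{compute}).

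The only genuinely delicate point is the passage from moment-wise almost sure convergence to a single almost sure event on which \emph{all} moments converge simultaneously; this is handled by the countable intersection above. The Schatten bound is essentially free because $P$ contains at least one $A_i$ in every monomial and $\|A_i\|_2$ is controlled by the convergence of $\Tr_n(A_i^2)$; this is precisely what forces the limiting object to live in $\cS^2(H)$ rather than in a general compact class.
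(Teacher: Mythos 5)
Your proof is correct and follows exactly the route the paper takes, namely feeding the a.s.\ convergence of $\Tr_n(M_n^m)$ (for every $m$, which is exactly what the ``asymptotically cyclically monotone a.s.'' conclusion of Theorem~\ref{ACM3} packages) into Proposition~\ref{prop2.7} with $p=2$, which is why the limit can only be guaranteed Hilbert--Schmidt, and then identifying the limiting eigenvalues through Corollary~\ref{cor unique}. The only cosmetic looseness is writing $\EV(X)=\EV(a)$ for the abstract element $X=P(a_1,\dots,a_k,b_1,\dots,b_\ell)$, whose eigenvalue multiset is, strictly speaking, \emph{defined} through such a moment-matching operator realization; your actual argument (moments of $a$ of degree $\geq 3$ are computable by the cyclic monotone rule and determine $\EV(a)$) is sound and is the intended content of the last sentence of the corollary.
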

Examples of the limiting eigenvalues are computed in Section \ref{sec5}. 

Our result implies the almost sure version of Shlyakhtenko's asymptotic infinitesimal freeness. 

\begin{corollary}\label{IF2} Let $A_i=A_i(n), B_j=B_j(n), i=1,\dots, k,$ $j=1,\dots, \ell$ be $n\times n$ deterministic matrices and let $U= U(n)$ be a 
Haar unitary random matrix. In addition to the assumption \eqref{A13} of Theorem \ref{ACM3}, we assume that 
$((B_1,\dots, B_\ell),\tr_n)$ converges up to the first order to an $\ell$-tuple of elements in an infinitesimal non-commutative 
probability space (see Definition \ref{def convergenceIF}).
Then for any $\ast$-polynomial $P$ in the unital noncommutative $\ast$-polynomial ring $\cC:=\C[x_1,\dots, x_k, y_1,\dots, y_\ell]$ 
the limits 
\begin{align}
&\tau( P):=\lim_{n\to\infty}\tr_n(P(A_1,\dots, A_k, UB_1U^*,\dots, UB_\ell U^*)), \\
&\tau'( P):= \lim_{n\to\infty}\frac{\tr_n(P(A_1,\dots, A_k, U B_1U^*,\dots, U B_\ell U^*)) - \tau( P)}{1/n}
\end{align} 
exist almost surely, and thus $(\cC, \tau',\tau)$ is an infinitesimal non-commutative probability space. 
Moreover, $\{x_i\}_{i=1}^k, \{y_j\}_{j=1}^\ell$ are infinitesimally free with respect to $(\tau',\tau)$. 
\end{corollary}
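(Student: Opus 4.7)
The plan is to split any polynomial $P \in \cC$ into two pieces according to whether the monomials involve some $x_i$ or not, and to show that the two statements (existence of $\tau, \tau'$ and infinitesimal freeness) are essentially immediate consequences of what has already been proved.

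First I would handle the existence of $\tau(P)$ and $\tau'(P)$ almost surely. For a monomial $Q$ purely in the $y_j$'s, the matrix substitution $Q(UB_1U^\ast,\dots,UB_\ell U^\ast) = U\, Q(B_1,\dots,B_\ell)\, U^\ast$, so $\tr_n$ of it equals $\tr_n(Q(B_1,\dots,B_\ell))$ by traciality, and by assumption the numbers $\tr_n(Q)$ and $n(\tr_n(Q)-\tau(Q))$ converge to $\tau(Q)$ and $\tau'(Q)$. For a monomial $M$ containing at least one $A_i$, traciality lets me cyclically rotate $M$ into the standard form $A_{i_1} (UB_{j_1}U^\ast) \cdots A_{i_p}(UB_{j_p}U^\ast)$, after which Theorem~\ref{ACM3} gives the almost sure finite limit of $\Tr_n(M(A,UBU^\ast))$. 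Since $\tr_n = \tfrac{1}{n}\Tr_n$, this forces $\tr_n(M) = O(1/n)$ almost surely, hence $\tau(M)=0$ and $\tau'(M) = \lim_{n\to\infty}\Tr_n(M)$ exists a.s.\ and equals exactly the cyclic monotone value produced by Theorem~\ref{ACM3}. Taking a countable intersection over all monomials (countably many coefficients of the polynomial ring $\cC$), linearity yields that $\tau(P), \tau'(P)$ are well defined almost surely for every $P\in\cC$, and both are linear tracial functionals. Thus $(\cC,\tau',\tau)$ becomes an infinitesimal non-commutative probability space.

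Next I would invoke Proposition~\ref{PropIF} to upgrade this to infinitesimal freeness. Set $\cA := \mathrm{alg}\{x_1,\dots,x_k\}$ (not containing the unit) and $\cB := \mathrm{alg}\{1_\cC, y_1,\dots,y_\ell\}$. The hypothesis $\mathrm{Ideal}_\cB(\cA) \subset \ker(\tau)$ of Proposition~\ref{PropIF} is exactly the statement proved above that every monomial containing at least one $x_i$ lies in $\ker(\tau)$. It then remains to check that $(\cA,\cB)$ is cyclically monotone with respect to $(\tau',\tau)$, i.e.\ that
\[
\tau'(a_1 b_1 a_2 b_2 \cdots a_n b_n) = \tau'(a_1 a_2 \cdots a_n)\,\tau(b_1)\tau(b_2)\cdots\tau(b_n)
\]
for $a_i \in \cA$, $b_i \in \cB$. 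Since the left side equals $\lim_{n\to\infty}\Tr_n(a_1(A)b_1(UBU^\ast)\cdots b_n(UBU^\ast))$ and the right side equals the same cyclic monotone product formula evaluated with $(\Tr_n,\tr_n)$ in the limit, this identity is precisely the asymptotic cyclic monotone independence established in Theorem~\ref{ACM3}.

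I do not anticipate a genuine obstacle: the only mild point to be careful about is the countable exceptional set. One fixes a countable $\Q$-basis (or $\Q[i]$-basis) of $\cC$, takes the a.s.\ event from Theorem~\ref{ACM3} applied to each basis monomial, intersects these countably many full-measure events, and extends by linearity. On this single full-measure event every $\tau(P), \tau'(P)$ exists simultaneously and the cyclic monotone identity holds on all of $\cA\times\cB$, so Proposition~\ref{PropIF} delivers infinitesimal freeness of $\{x_i\}_{i=1}^k$ and $\{y_j\}_{j=1}^\ell$.
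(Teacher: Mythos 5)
Your proposal is correct and follows essentially the same route as the paper: decompose $P$ into the ideal part (monomials containing some $x_i$) and the pure-$y$ part, use Theorem~\ref{ACM3} for the former and the convergence-up-to-first-order hypothesis for the latter, and then invoke Proposition~\ref{PropIF} with $\cA=\C[x_1,\dots,x_k]_0$, $\cB=\C[y_1,\dots,y_\ell]$. Your extra care with the countable exceptional set and the explicit verification of the cyclic-monotone identity for $(\tau',\tau)$ are sound but are just spelled-out versions of what the paper's shorter proof tacitly uses.
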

\begin{proof}
We decompose  $P=Q+R$ where $Q=Q(\{x_i\}_{i=1}^k,\{y_j\}_{j=1}^\ell)$  and $R=R(\{y_j\}_{j=1}^\ell)$ such that 
$Q(0,\dots, 0, y_1,\dots, y_\ell)=0$. By Theorem \ref{ACM3} $\Tr_n(Q)$ converges almost surely to a finite real number, 
so $\tau(Q)=0$. Hence $\tau'(Q) = \lim_{n\to\infty} \Tr_n(Q)$ converges almost surely. Since $((B_1,\dots, B_\ell), \tr_n)$ 
converges up to the first order then $\tau( R), \tau'(R )$ converge too. Therefore the limits $\tau(P ), \tau'( P)$ exist. 

Let $\cA:=\C[x_1,\dots, x_k]_0$ (not containing the unit) and let $\cB:= \C[y_1,\dots, y_\ell]$ (containing the unit). 
Then $\ideal_\cB(\cA) \subset \ker(\tau)$ since, as we saw, $\tau(Q)=0$. Since $(\cA,\cB)$ is cyclically monotone by 
Theorem \ref{ACM3}, $\cA,\cB$ are infinitesimally free by Proposition \ref{PropIF}. 
\end{proof}
Through the calculation in \cite{Shl} Shlyakhtenko suggested that infinitesimal freeness is applicable to outliers, but a 
rigorous proof is not obtained yet. 
\begin{problem} Combining the calculation of Shlyakhtenko \cite{Shl} and our almost sure convergence (and other ideas if needed), 
is it possible to rigorously prove the phase transition phenomena of outliers found by 
Baik et al.\ \cite{BBAP2005}, P\'ech\'e \cite{Pec2006}, and more generally by Benaych-Georges and 
Nadakuditi \cite{BGN2011} and Belinschi et al.\ \cite{BBCF}? 
\end{problem}
\subsection{General compact case}

Actually, we do not need trace class distributions in order to obtain the  almost sure convergence of eigenvalues, 
the compact setup is enough. In this section we denote by $\|\cdot\|$ the operator norm on $M_n(\C)$. 

\begin{theorem} \label{enhancedACM}
Let $A_i =A_i(n), B_j=B_j(n), i=1,\dots,k$, $j=1,\dots, \ell$ be deterministic $n\times n$ matrices and $U=U(n)$ be an $n\times n$ Haar unitary such that 
\begin{enumerate}[\rm(1)] 
\item $A_1,\dots, A_k$ are Hermitian, 
\item\label{bounded1} $((A_1,\dots, A_k), \Tr_n)$ converges in compact distribution to a $k$-tuple of compact 
operators $((a_1,\dots, a_k), \Tr_H)$ as $n\to\infty$ (see Definition \ref{def:cv-compact}),  
\item $((B_1,\dots, B_\ell), \tr_n)$ converges in distribution to an $\ell$-tuple of elements in a non-commutative probability space as $n\to\infty$,   
\item\label{bounded} $\sup_{n\in\N}\|B_i(n)\|<\infty$ for every $i=1,\dots,\ell$.   
\end{enumerate}
Let $P(x_1,\dots, x_k,y_1,\dots, y_\ell)$ be a selfadjoint $\ast$-polynomial with selfadjoint variables $x_1,\dots, x_k$ such 
that $P(0,\dots, 0, y_1,\dots, y_\ell)=0$. 
Then 
$
P(A_1,\dots, A_k, U B_1 U^*,\dots, UB_\ell U^*)
$
converges in eigenvalues to a deterministic compact operator almost surely. 
\end{theorem}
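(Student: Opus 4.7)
The plan is to reduce to the trace-class case of Corollary~\ref{convergence EV} via a $\CB$-functional-calculus truncation of the $A_i$'s, and then to pass to the vanishing-truncation limit using Weyl's eigenvalue-perturbation inequality. First I would extract from hypothesis~\eqref{bounded1} alone a uniform norm bound $M_0:=\max_i\sup_n\|A_i(n)\|<\infty$: pick any $M>\max_i\|a_i\|$ (finite since the $a_i$ are compact) and a nonnegative $f\in\CB$ with $f\equiv 1$ on $\{|x|\ge M+1\}$. Then $\Tr_H(f(a_i))=0$, while $\Tr_n(f(A_i(n)))$ dominates the number of eigenvalues of $A_i(n)$ of modulus $\ge M+1$, so the assumed convergence forces that count to vanish for large $n$. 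Combined with hypothesis~\eqref{bounded} on the $B_j$, this gives uniform operator-norm bounds on all matrices in sight.

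Fix $\epsilon>0$ and choose $g_\epsilon\in\CB$ with $g_\epsilon(x)=x$ for $\epsilon\le|x|\le M_0+1$, $g_\epsilon(x)=0$ for $|x|\le\epsilon/2$, and $\|g_\epsilon\|_\infty\le M_0+1$; set $A_i^\epsilon(n):=g_\epsilon(A_i(n))$. Since $g_\epsilon\in\CB$, hypothesis~\eqref{bounded1} yields convergence of every joint $\Tr_n$-moment of the $A_i^\epsilon(n)$ to the corresponding joint $\Tr_H$-moment of the finite-rank (a fortiori trace-class) operators $g_\epsilon(a_i)$. Thus Theorem~\ref{ACM3} applies to $(\{A_i^\epsilon(n)\}_i,\{B_j(n)\}_j,U(n))$, and Corollary~\ref{convergence EV} produces a deterministic selfadjoint Hilbert--Schmidt operator $c^\epsilon$ such that
\be
P^\epsilon(n):=P(A_1^\epsilon(n),\dots,A_k^\epsilon(n),UB_1U^\ast,\dots,UB_\ell U^\ast)\longrightarrow c^\epsilon\quad\text{in eigenvalues, a.s.}
\ee
On the other hand $\|A_i(n)-A_i^\epsilon(n)\|\le\epsilon$ uniformly in $n$, and every monomial of $P$ involves at least one $x_i$ (the content of $P(0,\dots,0,y_1,\dots,y_\ell)=0$); together with the uniform operator-norm bounds on the $A_i^\epsilon$ and $B_j$, a routine telescoping Lipschitz estimate produces a constant $K_P=K_P(P,M_0,N_0)$ with
\be
\bigl\|P(A_1(n),\dots,UB_\ell U^\ast)-P^\epsilon(n)\bigr\|\le K_P\,\epsilon\quad\text{for all }n,
\ee
and Weyl's inequality then gives $|\lambda_i^u(P(A_1(n),\dots))-\lambda_i^u(P^\epsilon(n))|\le K_P\epsilon$ for every $i\in\N$ and $u\in\{+,-\}$, simultaneously in $i$ and $u$.

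Restricting $\epsilon$ to $\{1/m:m\in\N\}$ and intersecting the countably many null sets produced above, I obtain a single full-measure event $\Omega_0$ on which, sending $n\to\infty$,
\be
|\lambda_i^u(c^{1/m})-\lambda_i^u(c^{1/m'})|\le K_P(1/m+1/m')\quad\text{for all }i,u,m,m'.
\ee
Each Cauchy limit $\lambda_i^u:=\lim_{m\to\infty}\lambda_i^u(c^{1/m})$ lies within $K_P/m$ of the tail of the compact operator $c^{1/m}$, so $\lambda_i^u\to 0$ as $i\to\infty$; the $\{\lambda_i^u\}_{i,u}$ are thus the eigenvalues of a selfadjoint compact operator $c$, and recombining the two estimates gives $\lambda_i^u(P(A_1(n),\dots))\to\lambda_i^u(c)$ for every $i,u$ on $\Omega_0$, which is the desired convergence in eigenvalues (Definition~\ref{def:cv-in-eigenvalues}). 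The main obstacle is precisely the gap between hypothesis~\eqref{bounded1} and genuine trace-class convergence: it supplies neither a compact limit in $\cS^1$ nor an a priori operator-norm bound, so neither Theorem~\ref{ACM3} nor a direct norm-perturbation argument is applicable. Both gaps are closed jointly by the $\CB$-norm-boundedness preliminary and by the assumption $P(0,\dots,0,y_1,\dots,y_\ell)=0$, which is what ultimately makes the polynomial truncation error $O(\epsilon)$ rather than $O(1)$.
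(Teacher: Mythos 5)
Your argument is correct and follows the same overall truncation-plus-Weyl strategy as the paper: cut off the $A_i$ by applying a $\CB$-function to obtain finite-rank truncations, invoke Corollary~\ref{convergence EV} for the truncated polynomial, then control the truncation error uniformly in $n$ with Weyl's eigenvalue perturbation inequality and pass to the limit. Where your proof genuinely diverges from the paper's, it is an improvement on two points. First, you prove that hypothesis~\eqref{bounded1} alone forces $\sup_n\|A_i(n)\|<\infty$; the paper's proof uses this bound silently in its last paragraph (when asserting $M=\sup_n\|X_\ell'\|<\infty$) without recording how it follows from convergence in compact distribution. Second, and more substantively, your compactness argument for the limit ($\lambda_i^u\to0$ as $i\to\infty$) simply reuses the deterministic Cauchy estimate $|\lambda_i^u-\lambda_i^u(c^{1/m})|\le K_P/m$ together with compactness of each $c^{1/m}$; the paper instead runs a separate singular-value argument $s_{mi-m+1}(P)\le\sum_\ell\|X_\ell'\|s_i(A_{p(\ell)})$, which again needs a uniform-in-$n$ eigenvalue-decay statement that your preliminary norm-bound step would supply anyway. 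A minor cosmetic difference: the paper chooses truncation levels $\ep_j$ away from the countable set of eigenvalue magnitudes of the $A_p(n)$ and of their limits, whereas your $g_\ep$ makes no such choice; since the convergence in hypothesis~\eqref{bounded1} holds for every $f\in\CB$ and the bound $\|A_i-g_\ep(A_i)\|\le\ep$ holds regardless, this avoidance appears to be dispensable, and your version is a little cleaner.
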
 
\begin{proof} We may assume that $k=\ell$. For simplicity $U B_p U^*$ is abbreviated to $B_p$. By assumption, 
every $A_p$ converges in eigenvalues. We can then find some sequence $\{\ep_j\}_{j \geq1}$ such that $\ep_j \downarrow0$ and 
$$
\{\ep_j: j\in\N \}\cap \{|\lambda_i^u(A_p(n))|, \lim_{N\to\infty}|\lambda_i^u(A_p(N))|: i,n\in\N, 1 \leq p \leq k, u\in\{+,-\} \}=\emptyset.
$$
 Let $f_j$ be a continuous function on $\R$ such that $f_j$ is non-decreasing and 
\be
f_j(x)= 
\begin{cases} 0, & |x|< \ep_{j+1}, \\
x,& |x| > \ep_j. 
\end{cases}
\ee
Let $A_p^{(j)}, a_p^{(j)}$ be the truncations $f_j(A_p), f_j(a_p)$ respectively, so that they are finite rank operators and in particular 
trace class operators. By the definition of convergence in compact distribution, $((A_1^{(j)}, \dots, A_k^{(j)}), \Tr_n)$ converges 
in distribution to the trace class operators $((a_1^{(j)}, \dots, a_k^{(j)}), \Tr_H)$ as $n\to\infty$.  Under such circumstances, for 
each fixed $j \in\N$ we apply Corollary \ref{convergence EV} to 
the pair
$
(\{A_1^{(j)},\dots A_k^{(j)}\}, \{B_1, \dots, B_k\}). 
$
Thus,  the random eigenvalues of the polynomial $P(A_1^{(j)},\dots, A_k^{(j)}, B_1,\dots, B_k)$,  denoted by 
$\{\lambda_i^{(j)}(n)\}_{i\geq1}$, converge to some deterministic eigenvalues $\{\lambda_i^{(j)}\}_{i\geq1} \in \ell^2(\R)$ as $n\to\infty$: 
\be
\lim_{n\to\infty}(\lambda_i^{(j)})^\pm(n) = (\lambda_i^{(j)})^\pm,\qquad i,j \in\N. 
\ee

It is follows by functional calculus that 
\begin{align}
&\sup_{n\in\N, 1\leq p \leq k}\left\|A_p^{(j)} - A_p\right\| \leq \ep_j, && j \in \N, \label{eq44} \\
&\sup_{n\in\N, 1\leq p \leq k}\left\|A_p^{(j)} - A_p^{(j')}\right\| \leq \ep_j, && 1\leq  j \leq j'.  \label{eq45}
\end{align}
After the use of several triangular inequalities, we can show by \eqref{eq44}, \eqref{eq45} and the assumption 
\eqref{bounded} that the random variables
\begin{align}
&\delta_j:= \sup_{n\in\N}\left\|P(A_1,\dots, A_k, B_1,\dots, B_k)-P(A_1^{(j)},\dots, A_k^{(j)}, B_1,\dots, B_k)\right\|, && 1 \leq j, \\
&\delta_{j, j'}:= \sup_{n\in\N}\left\|P(A_1^{(j)},\dots, A_k^{(j)}, B_1,\dots, B_k)-P(A_1^{(j')},\dots, A_k^{(j')}, B_1,\dots, B_k)\right\|,&& j \leq j'
\end{align}
converge to $0$ almost surely as $j\to\infty$. 

Let $\{\lambda_i(n)\}_{i\geq1}$ be the random eigenvalues of $P(A_1,\dots, A_k, B_1,\dots, B_k)$. By Weyl's inequality for 
eigenvalues \cite[Corollary III.2.6.]{Bhatia}, we have 
\begin{align}
&\left| \lambda_i^\pm(n)- (\lambda_i^{(j)})^\pm(n)\right| \leq \delta_j \text{~a.s.},&& n \in\N, \label{first} \\
& \left| (\lambda_i^{(j)})^\pm(n)- (\lambda_i^{(j')})^\pm(n)\right| \leq \delta_{j,j'} \text{~a.s.}, &&n \in\N. \label{second}
\end{align}
The second inequality \eqref{second} gives us  $|(\lambda_i^{(j)})^\pm- (\lambda_i^{(j')})^\pm| \leq \delta_{j,j'}$ for 
$j \leq j'$, and so $\{(\lambda_i^{(j)})^\pm\}_{j\geq1}$ is a Cauchy sequence and has a limit $\lambda_i^\pm$ as $j\to\infty$ for 
each fixed $i$. 
The first inequality \eqref{first} gives us 
\be
\begin{split}
\varlimsup_{n\to\infty}\left|\lambda_i^\pm(n)-\lambda_i^\pm\right| 
\leq& \varlimsup_{n\to\infty}\left|\lambda_i^\pm(n)-(\lambda_i^{(j)})^\pm(n)\right| +\varlimsup_{n\to\infty}\left|(\lambda_i^{(j)})^\pm(n)-(\lambda_i^{(j)})^\pm\right| \\
& +\left|(\lambda_i^{(j)})^\pm-\lambda_i^\pm\right| \\ 
\leq& \delta_j+\left|(\lambda_i^{(j)})^\pm-\lambda_i^\pm\right| \text{~a.s.}  
\end{split}
\ee
By letting $j\to\infty$ we get the almost sure convergence $\lambda_i^\pm(n)\to \lambda_i^\pm$ as $n\to\infty$. 

Finally we prove that $\lambda_i^\pm \to 0$ as $i\to\infty$, so the limiting eigenvalues correspond to a compact operator. 
We denote by  $s_1(X)\geq s_2(X)\geq \dots \geq0$ the singular values of a compact operator $X$. Note that in our notation 
of proper arrangement, for a selfadjoint operator $X$ it holds that $s_i(X)=|\lambda_i(X)|$.  Note also that singular values 
satisfy $s_i(X Y Z) \leq \|X\| \|Z\| s_i(Y)$ and $s_{i+j-1}(X+Y) \leq s_i(X) + s_i(Y)$ which can be proved by the mini-max 
principle \cite[Corollary III.1.2 and Problem III.6.2]{Bhatia}.

Suppose that the polynomial $P$ is of the form
\be
P(A_1,\dots, A_k,B_1,\dots, B_k)=\sum_{\ell=1}^m X_\ell, 
\ee
where $X_\ell$ is a monomial containing some $A_{p(\ell)}$.
Take $\ep>0$. By assumption \eqref{bounded1}, there exists some $i_0\in\N$ such that 
\be
\sup_{n\in\N, 1 \leq p \leq k}|\lambda_i(A_p)| \leq \ep/m,\qquad i \geq i_0. 
\ee 
Let $X_\ell'$ be the monomial $X_\ell$ with the matrix $A_{p(\ell)}$ removed from it. Then we get 
\begin{align}
&s_{m i - m+1}( P) \leq \sum_{\ell=1}^m \|X_\ell' \| s_i(A_{p(\ell)}) \leq M \ep,  \qquad i \geq i_0, \\
&M :=\sup_{n\in\N, 1 \leq \ell \leq k} \|X_\ell' \|.   
\end{align}
Since $\sup_{n\in\N}\|A_p(n)\|<\infty$ and  $\sup_{n\in\N}\|B_p(n)\|<\infty$ then $M$ is finite almost surely too. 
This shows that $\sup_{n\in\N} s_i( P) = \sup_{n\in\N}|\lambda_i(n)|$ converges to $0$ as $i\to\infty$ almost surely, 
and so $\lim_{i\to\infty}\lambda_i=0$.  
\end{proof}

\subsection{Several Haar unitaries case}\label{Several}
Theorems \ref{ACM}, \ref{ACM3}, \ref{enhancedACM} and Corollaries \ref{convergence EV}, \ref{IF2} can be generalized 
to the case when several independent Haar unitaries are involved. Proofs are just to combine our results of asymptotic cyclic 
monotonicity with asymptotic freeness between $B_i$'s. For example Theorem \ref{ACM}  can be generalized as follows. 

\begin{theorem} \label{GACM}
Let $A_i=A_i(n), B_{i j}=B_{i j}(n), i,j=1,\dots,k$ be $n\times n$ random matrices and $U_i= U_i(n), i=1,\dots, k$ be 
independent Haar unitary random matrices. Suppose that 
\begin{enumerate}[\rm(1)]
\item\label{A14} $((A_1,\dots, A_k), \bE\otimes \Tr_n)$ converges in distribution to a $k$-tuple of trace class operators as $n\to\infty$,  
\item\label{B14} for each $i$, $((B_{i1}, \dots, B_{i k}), \bE\otimes \tr_n)$ converges in distribution to a $k$-tuple of 
elements in a non-commutative probability space as $n\to\infty$,  
\item the families $\{A_i\}_{i=1}^k, \{B_{ij}\}_{i,j=1}^k, \{U_i\}_{i=1}^k$ are independent. 
\end{enumerate}
Then the pair $(\{A_i\}_{i=1}^k, \{U_{i} B_{i j} U_{i}^*\}_{i,j=1}^k)$ is asymptotically cyclically monotone with respect 
to $(\bE \otimes \Tr_n,\bE\otimes \tr_n)$.  
\end{theorem}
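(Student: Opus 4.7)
The plan is to combine Voiculescu's classical asymptotic freeness theorem for families conjugated by independent Haar unitaries with the Weingarten argument of Theorem \ref{ACM}. Writing $C_{ij} := U_i B_{ij} U_i^*$, the proof proceeds in two stages.

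First, I would invoke Voiculescu's theorem: because the $U_i$ are independent Haar unitaries conjugating deterministic families each having a limiting $\tr_n$-distribution by hypothesis \eqref{B14}, the families $\mathcal{F}_i := \{C_{i1}, \ldots, C_{ik}\}$ are asymptotically free with respect to $\bE \otimes \tr_n$. Consequently the combined family $\{C_{ij}\}_{i,j=1}^k$ admits a joint limiting $\tr_n$-distribution $\tau$, equal to the free product of the individual tracial limits $\tau_i$. This supplies the ``$\tau$'' of Definition \ref{construction} for the candidate cyclic monotone limit.

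Next, to establish the cyclic monotone factorization between $\{A_i\}$ and $\{C_{ij}\}$ with respect to $(\bE \otimes \Tr_n,\, \bE \otimes \tr_n)$, I would follow the reduction at the start of the proof of Theorem \ref{ACM}: by traciality it suffices to check, for any cyclic monomial of the form $M = A_{p_1} D_1 \cdots A_{p_s} D_s$ with each $D_l$ a word in the $C_{ij}$'s, that
\begin{equation*}
\bE[\Tr_n(M)] = \Tr_n(A_{p_1} \cdots A_{p_s}) \cdot \prod_{l=1}^s \bE[\tr_n(D_l)] + o(1).
\end{equation*}
Since the $U_i$ are independent, I would apply the Weingarten formula \eqref{Wg2} separately to each $U_i$, producing a multi-sum over tuples $(\sigma_1^{(i)}, \sigma_2^{(i)}, \sigma_3^{(i)})_{i=1}^k$ subject to the usual cyclic constraints coming from the $U_i$-positions. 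The decay analysis then proceeds exactly in parallel with Theorem \ref{ACM}: the trace-class hypothesis on the $A_i$'s keeps each $\Tr_{\sigma_1^{(i)}}$-factor of order $O(1)$ on cycles containing an $A$-letter, the $\tr_n$-hypothesis gives $\Tr_{\sigma_2^{(i)}} = O(n^{\ell(\sigma_2^{(i)})})$, and each Weingarten contribution is $O(n^{-s_i - |\sigma_3^{(i)}|})$. Only the all-identity configuration $\sigma_2^{(i)} = \sigma_3^{(i)} = 1$ for every $i$ survives in the limit, and reproduces the desired right-hand side; the free product structure from the first stage then converts $\prod_l \bE[\tr_n(D_l)]$ to $\prod_l \tau(d_l)$ in the limit.

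The main obstacle will be the combinatorial bookkeeping for the several simultaneous Weingarten integrals. One must carefully track how the positions of the different $U_i$'s interleave within $M$ and within each $D_l$, and verify that subleading contributions from the various Weingarten sums do not conspire to produce hidden leading-order terms (a phenomenon analogous to the cancellations exhibited in the proof of Lemma \ref{ACM2}). Once the dominant permutation configurations are correctly identified, they produce exactly the cyclic monotone factorization of Definition \ref{construction}, with $\tau = \ast_i \tau_i$ playing the role of the ambient trace.
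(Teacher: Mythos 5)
Your high-level plan shares the paper's first ingredient (invoking asymptotic freeness of the conjugated families via \cite{Collins2003}), but diverges at the crucial second step, where you propose to redo the Weingarten decay analysis with $k$ simultaneous, interleaved integrals over $U_1,\dots,U_k$. The paper avoids this entirely with a short reduction trick: introduce one extra Haar unitary $U$ independent of everything, set $\tilde B_{ij}:=U_iB_{ij}U_i^*$, observe that $\{\tilde B_{ij}\}_{i,j}$ has a joint limiting $\tr_n$-distribution by asymptotic freeness, and then apply the already-proved single-unitary Theorem \ref{ACM} to the pair $(\{A_i\},\{U\tilde B_{ij}U^*\})$. Since $(UU_1,\dots,UU_k)$ is equidistributed with $(U_1,\dots,U_k)$, the pair $(\{A_i\},\{\tilde B_{ij}\})$ has the same joint distribution as $(\{A_i\},\{U\tilde B_{ij}U^*\})$, and the conclusion transfers for free. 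This converts the whole multi-unitary problem into the single-unitary one in two lines.

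By contrast, the route you sketch would require a genuinely new combinatorial argument. After integrating out $U_k$ alone, the residual expression is no longer a product of traces of fixed deterministic matrices: the ``$A$-blocks'' and ``$B$-blocks'' seen by the Weingarten formula for $U_{k-1}$ now themselves depend on $\sigma$-indexed products of matrices and the remaining unitaries, so the clean separation into $\Tr_{\sigma_1}(A)\Tr_{\sigma_2}(B)\Wg(\sigma_3,n)$ breaks down and must be replaced by a recursive or graphical bookkeeping over $\prod_i S_{m_i}^3$. Your claim that ``only the all-identity configuration survives'' is plausible for the leading order, but it is not ``exactly in parallel with Theorem \ref{ACM}'': the power-counting must now track how the cycle structure of each $\sigma_3^{(i)}$ connects orbits across different $U_i$'s, and this requires an argument you have not supplied. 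In short, your proposal is probably salvageable, but at a real technical cost; the paper's proof accomplishes the same thing by reducing to the already-established case through a distributional symmetry, which is the step you are missing.
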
 
\begin{proof}
We take a Haar unitary $U$ independent of all $A_i, B_{ij}$ and $U_i$. Let $\tilde{B}_{ij}:= U_{i} B_{ij} U_{i}^*$. 
By \cite[Theorem 3.1]{Collins2003} it follows that $\{\tilde{B}_{1j}\}_{j=1}^k, \dots, \{\tilde{B}_{kj}\}_{j=1}^k$ are asymptotically 
free with respect to $\bE\otimes \tr_n$, so $((\tilde{B}_{11}, \tilde{B}_{12}, \dots, \tilde{B}_{kk}), \bE\otimes \tr_n)$ converges 
in distribution to a $k^2$-tuple of elements in a non-commutative probability space as $n\to\infty$. Theorem \ref{ACM} implies 
that the pair $(\{A_i\}_{i=1}^k, \{U \tilde{B}_{ij} U^*\}_{i,j=1}^k)$ is asymptotically cyclically monotone with respect 
to $(\bE \otimes \Tr_n,\bE\otimes \tr_n)$. Since $(U U_1, \dots, U U_k)$ has the same distribution as $(U_1, \dots, U_k)$, 
we conclude that the pair $(\{A_i\}_{i=1}^k, \{\tilde{B}_{ij}\}_{i,j=1}^k)$ is also asymptotically cyclically monotone with 
respect to $(\bE \otimes \Tr_n,\bE\otimes \tr_n)$. 
\end{proof}

The same technique allows us to generalize Lemma \ref{ACM2} to the several Haar unitaries case, and so we obtain the 
almost sure convergence, namely the generalization of Theorem \ref{ACM3}. 
\begin{theorem} \label{GACM3}
Let $A_i=A_i(n), B_{i j}=B_{i j}(n), i,j=1,\dots,k$ be $n\times n$ deterministic matrices and $U_i= U_i(n), i=1,\dots, k$ be 
independent Haar unitary random matrices. Suppose that 
\begin{enumerate}[\rm(1)]
\item\label{A0} $((A_1,\dots, A_k), \Tr_n)$ converges in distribution to a $k$-tuple of trace class operators as $n\to\infty$,  
\item\label{B0} for each $i$, $((B_{i1}, \dots, B_{i k}), \tr_n)$ converges in distribution to a $k$-tuple of elements in a 
non-commutative probability space as $n\to\infty$,  
\end{enumerate}
Then the pair $(\{A_i\}_{i=1}^k, \{U_{i} B_{i j} U_{i}^*\}_{i,j=1}^k)$ is asymptotically cyclically monotone with respect 
to $(\Tr_n,\tr_n)$ almost surely.  
\end{theorem}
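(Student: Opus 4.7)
The plan is to reduce Theorem \ref{GACM3} to the single-Haar-unitary case already established in Theorem \ref{ACM3}, using exactly the lifting trick employed in the proof of Theorem \ref{GACM}, but now at the almost sure level. First I would introduce an auxiliary $n\times n$ Haar unitary $U=U(n)$ independent of everything in sight, and set $\tilde{B}_{ij}:=U_i B_{ij} U_i^\ast$. The almost sure asymptotic freeness of families conjugated by independent Haar unitaries (the a.s.\ strengthening of \cite[Theorem 3.1]{Collins2003}, a standard consequence of Voiculescu's a.s.\ asymptotic freeness of Haar unitaries combined with the hypothesis \eqref{B0}) implies that the $k$-tuples $\{\tilde{B}_{1j}\}_{j=1}^k,\ldots,\{\tilde{B}_{kj}\}_{j=1}^k$ are almost surely asymptotically free with respect to $\tr_n$. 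In particular, the joint family $(\tilde{B}_{11},\ldots,\tilde{B}_{kk})$ converges in distribution with respect to $\tr_n$ almost surely.

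Next, I would condition on a realization of $\{\tilde{B}_{ij}\}$; note that this family is independent of $U$. On the almost sure event where its joint $\tr_n$-distribution converges, the $\tilde{B}_{ij}$'s play the role of deterministic matrices satisfying a limiting joint $\tr_n$-distribution, so Theorem \ref{ACM3} applies directly to the pair $(\{A_i\}_{i=1}^k, \{U\tilde{B}_{ij}U^\ast\}_{i,j=1}^k)$. Conditionally on the realization, this pair is therefore almost surely asymptotically cyclically monotone with respect to $(\Tr_n,\tr_n)$; a Fubini argument then lifts this to an unconditional almost sure statement.

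To conclude, I would invoke Haar invariance: the tuple $(UU_1,\ldots,UU_k)$ has the same joint distribution as $(U_1,\ldots,U_k)$, so the joint law of the family $(A_i,\, U\tilde{B}_{ij}U^\ast)_{i,j}$ coincides with the joint law of $(A_i,\, U_i B_{ij}U_i^\ast)_{i,j}$. Since asymptotic cyclic monotonicity is a property of joint distributions under $(\Tr_n,\tr_n)$, the claim follows for the original pair.

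The main obstacle is justifying that Theorem \ref{ACM3} legitimately applies to random $B$'s which are independent of $U$ and whose joint $\tr_n$-distribution converges only almost surely. This is the content of the conditioning remark made before Lemma \ref{ACM2}: one must check that the $O(n^{-2})$ control on fourth central moments, and hence the Borel--Cantelli/summability step leading to almost sure convergence, transfer through conditioning on the almost sure event where the $\tilde{B}_{ij}$'s admit the requisite limit. This is a routine measure-theoretic manipulation but is the one place requiring care. An alternative, sketched by the authors (``the same technique allows us to generalize Lemma \ref{ACM2}''), would be to redo the Weingarten decay analysis of Lemma \ref{ACM2} with $k$ independent Haar unitaries; the cases (i)--(v) go through verbatim once one groups the $4k$ copies according to which $U_i$ acts on each factor, but this route is substantially more notationally involved for no extra payoff.
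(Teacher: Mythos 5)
Your proposal is correct, and it is a legitimate (and arguably cleaner) way to fill in a proof that the paper leaves implicit. The paper's explicit suggestion (``the same technique allows us to generalize Lemma~\ref{ACM2} to the several Haar unitaries case'') most literally means redoing the Weingarten fourth-moment estimate over $S_{4k}$ with $k$ independent Haar unitaries and then repeating the summability/Borel--Cantelli argument of Theorem~\ref{ACM3}. You instead reuse the auxiliary-unitary reduction from the proof of Theorem~\ref{GACM} verbatim, but promote it to the almost-sure level by conditioning: (a)~almost-sure asymptotic freeness of the independently rotated families gives a.s.\ convergence of the joint $\tr_n$-distribution of $(\tilde B_{ij})$; (b)~conditioning on a realization in this full-measure event, Theorem~\ref{ACM3} (via the remark preceding Lemma~\ref{ACM2} that permits random $B$'s independent of $U$ with a.s.\ convergent $\tr_n$-distribution) applies to the pair $(\{A_i\},\{U\tilde B_{ij}U^\ast\})$; (c)~Fubini de-conditions; (d)~the equality in law of the full sequences $(UU_1,\dots,UU_k)_n$ and $(U_1,\dots,U_k)_n$ transfers the a.s.\ property. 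Each of these steps is sound, and the decisive observation --- that asymptotic cyclic monotonicity a.s.\ is a property of the joint law of the random-matrix sequence, hence preserved under (c) and (d) --- is correctly identified.

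Two points worth tightening. First, be explicit about the input in step~(a): \cite[Theorem~3.1]{Collins2003} gives asymptotic freeness in expectation, whereas you need the a.s.\ strengthening (Voiculescu \cite{Voi98}, or Collins--Male \cite{CoMa2014}); make the citation precise since this is the only place your proof uses anything beyond Theorem~\ref{ACM3}. Second, in the Fubini step, note that the conditional law of $U$ given $\{\tilde B_{ij}\}$ is the unconditional Haar measure (by independence), so ``applying Theorem~\ref{ACM3} conditionally'' means applying it literally to the deterministic sequence obtained by freezing the $U_i$'s; the event on which this works is precisely the full-measure event from step~(a). With those two points spelled out, the argument is complete, and it does avoid the notational overhead of case analysis (i)--(v) in Lemma~\ref{ACM2} over $S_{4k}$ with several unitaries.
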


Corollary \ref{convergence EV} is generalized in the following form. 

\begin{corollary}\label{G convergence EV} Under the assumptions of Theorem \ref{GACM3}, for any 
selfadjoint $\ast$-polynomial $P(\{x_i\}_{i=1}^k,\{y_{ij}\}_{i,j=1}^k)$ such that $P(\{0\}_{i=1}^k,\{y_{ij}\}_{i,j=1}^k)=0$, the 
Hermitian random matrix $P(\{A_i\}_{i=1}^k, \{U_i B_{ij} U_i^*\}_{i,j=1}^k)$ converges in eigenvalues to a selfadjoint 
Hilbert Schmidt operator almost surely. The limiting eigenvalues can be computed by asymptotic cyclic monotonicity 
of $(\{A_i\}_{i=1}^k, \{U_i B_{ij} U_i^*\}_{i,j=1}^k)$ and asymptotic freeness of 
$\{ \{U_1 B_{1j} U_1^*\}_{j=1}^k, \dots, \{U_k B_{kj} U_k^*\}_{j=1}^k \}$. 
\end{corollary}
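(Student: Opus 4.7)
The plan is to follow the same structure as Corollary \ref{convergence EV}, substituting Theorem \ref{GACM3} in place of Theorem \ref{ACM3}, and then bolting on the classical asymptotic freeness of independently rotated matrices to justify the final statement about how the limiting eigenvalues can actually be computed.

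I would begin with the key algebraic observation that drives the whole argument: since $P(\{0\}_{i=1}^k,\{y_{ij}\}_{i,j=1}^k)=0$, every monomial in $P$ contains at least one variable $x_i$, and hence every monomial in every power $P^m$ contains at least $m$ occurrences of such variables. In particular, for every integer $m\geq 1$ the polynomial $P^m$ falls under the scope of Theorem \ref{GACM3}, which therefore yields the almost sure existence of
\[
\alpha_m := \lim_{n\to\infty}\Tr_n\!\left(P(\{A_i\}_{i=1}^k, \{U_i B_{ij} U_i^*\}_{i,j=1}^k)^m\right)\in\R.
\]
In other words, moments of all orders $m \geq 1$ of the selfadjoint random matrix $a_n := P(\{A_i(n)\},\{U_i(n) B_{ij}(n) U_i(n)^*\})$, taken with respect to the \emph{non-normalized} trace, converge almost surely.

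I would then invoke Proposition \ref{prop2.7} with $p=2$: each $a_n$ is a finite-rank (hence Schatten $2$-class) selfadjoint matrix on $\C^n$, and the almost sure convergence of $\Tr_n(a_n^m)$ for every $m\geq 2$ is exactly what the proposition requires, producing an almost sure limit $a$, a selfadjoint Hilbert Schmidt operator, in eigenvalues. The only hypothesis that requires a separate comment is the implicit uniform Schatten-$2$ boundedness $\sup_n \|a_n\|_2 <\infty$, which however follows at once from the almost sure convergence of $\Tr_n(a_n^2)$ itself, already one of the instances supplied above.

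For the final computational assertion, the limiting moments $\alpha_m$ are, by the asymptotic cyclic monotone independence furnished by Theorem \ref{GACM3}, determined by the joint cyclic monotone distribution of the limits $(a_1,\dots,a_k)$ together with the joint $\tr_n$-distribution of the family $(U_i B_{ij} U_i^*)_{i,j}$. The latter in turn reduces further: by the asymptotic freeness of independent Haar conjugates (the very ingredient invoked in the proof of Theorem \ref{GACM}, via \cite{Collins2003}), the $k$ subfamilies $\{U_i B_{ij} U_i^*\}_{j=1}^k$, $i=1,\dots,k$, are asymptotically free almost surely with respect to $\tr_n$, so their joint distribution is the free product of the individual ones. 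Combining these two structural pieces in principle determines every $\alpha_m$ and hence every $\lambda_i^\pm(a)$. The main obstacle, such as it is, is therefore not analytic but bookkeeping: one must be careful that the ``two-layered'' computation (cyclic monotone across the $A$/$B$ divide, free across the $i$-index of the $U_i$'s) is set up in a consistent ambient non-commutative measure space before declaring the proof complete.
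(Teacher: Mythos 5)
Your proposal is correct and follows the same route the paper intends: the corollary is stated immediately after the remark that it "generalizes" Corollary \ref{convergence EV}, whose own proof is simply to invoke Proposition \ref{prop2.7} on the moments supplied by the asymptotic cyclic monotonicity theorem (here Theorem \ref{GACM3} in place of Theorem \ref{ACM3}), and your argument does exactly this — including the correct choice $p=2$, the observation that $P^m(\{0\},\{y_{ij}\})=0$ keeps every power inside the scope of the theorem, and the note that $\sup_n\|a_n\|_2<\infty$ is automatic from the convergence of $\Tr_n(a_n^2)$. Your closing paragraph also correctly attributes the computability of the limit to the two-layer factorization (cyclic monotone across the $A$/$B$ split, free across the independent $U_i$'s, the latter being the \cite{Collins2003} ingredient already used in Theorem \ref{GACM}), which is precisely what the paper's statement asserts.
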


Corollary \ref{IF2} and Theorem \ref{enhancedACM} can be similarly generalized, the explicit statements of which are omitted.  
We also mention that we can obtain the above results if we take $B_{ij}=G_i$ where $G_1,\dots, G_k$ are 
independent GUEs normalized so that each $G_i$ converges in distribution. 
In this case we may remove the Haar unitaries $U_i$ since the independent GUEs provide independent 
Haar unitaries when diagonalized.

In the case of a single Haar unitary, considering the pair $(\{U A_i U^*\}_{i=1}^k, \{B_j\}_{j=1}^\ell)$ is equivalent to 
considering the pair $(\{A_i\}_{i=1}^k, \{U B_j U^*\}_{j=1}^\ell)$. 
However, in the several Haar unitaries case, the pair $(\{U_i A_{i j} U_i^*\}_{i,j=1}^k,\{B_i\}_{i=1}^k)$ becomes rather trivial. 

\begin{proposition}\label{trivial} Let $A_i=A_i(n), B_i=B_i(n), i=1,\dots,k$ be $n\times n$ deterministic matrices 
and $U_i= U_i(n), i=1,\dots, k$ be independent Haar unitaries. Suppose that 
\begin{enumerate}[\rm(1)]
\item\label{A1} $((A_1,\dots, A_k), \Tr_n)$ converges in distribution to a $k$-tuple of trace class operators as $n\to\infty$,  
\item\label{B1} $((B_{1}, \dots, B_{k}), \tr_n)$ converges in distribution to a $k$-tuple of elements in a non-commutative 
probability space as $n\to\infty$,  
\item $\sup_{n\in\N} \|B_i(n)\|<\infty$ for every $i =1,\dots,k.$
\end{enumerate}
For any tuple $(i_1,\dots,i_k)\in\{1,\dots,k\}^k$ such that at least two of them are distinct, we have the almost sure convergence 
\[
\lim_{n\to\infty}\Tr_n(U_{i_1}A_{1}U_{i_1}^* B_1 \cdots U_{i_k} A_k U_{i_k}^* B_k)=0. 
\]
\end{proposition}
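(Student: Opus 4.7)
By the cyclicity of $\Tr_n$, rewrite
\[
\Tr_n(U_{i_1}A_1 U_{i_1}^* B_1 \cdots U_{i_k} A_k U_{i_k}^* B_k) = \Tr_n(A_1 C_1 A_2 C_2 \cdots A_k C_k),
\]
where $C_j := U_{i_j}^* B_j U_{i_{j+1}}$ with indices taken modulo $k$. The strategy is to view the $C_j$'s as the $B$-side of a cyclic monotone-type pair $(\{A_i\}_{i=1}^k, \{C_j\}_{j=1}^k)$ and to exhibit at least one vanishing factor $\lim_n \tr_n(C_j) = 0$, which forces the cyclic monotone factorization to vanish.

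Since the $i_j$ are not all equal, traversing the cyclic tuple $(i_1,\ldots,i_k,i_1)$ produces some $j_0$ with $i_{j_0} \neq i_{j_0+1}$. Writing $p := i_{j_0}$, $q := i_{j_0+1}$, and $B := B_{j_0}$, we have $C_{j_0} = U_p^* B U_q$ with $p \neq q$. By independence of $U_p$ and $U_q$ and $\bE[U_r] = 0$ entrywise for any Haar unitary (left-invariance of Haar measure), one gets $\bE[\tr_n(C_{j_0})] = 0$. Integrating out $U_q$ via $\bE[(U_q)_{ki}(U_q^*)_{ab}] = n^{-1}\delta_{kb}\delta_{ia}$, then using the cyclicity of $\Tr_n$ and $U_p^* U_p = I$, yields
\[
\bE\!\left[|\tr_n(C_{j_0})|^2\right] = \frac{1}{n^2}\bE\!\left[\Tr_n(U_p^* B U_q)\Tr_n(U_q^* B^* U_p)\right] = \frac{1}{n^3}\Tr_n(BB^*) = O(n^{-2}),
\]
since $\Tr_n(BB^*) = n\,\tr_n(BB^*) = O(n)$ by the convergence hypothesis on $B_{j_0}$. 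Chebyshev and Borel--Cantelli then give $\tr_n(C_{j_0}) \to 0$ almost surely.

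It remains to establish the cyclic monotone factorization
\[
\lim_{n\to\infty}\Tr_n(A_1 C_1 \cdots A_k C_k) = \lim_{n\to\infty}\Tr_n(A_1\cdots A_k)\prod_{j=1}^k \lim_{n\to\infty}\tr_n(C_j) \quad\text{a.s.}
\]
This follows by a Weingarten analysis parallel to Theorem \ref{ACM3}. The integrations over the independent Haar unitaries $U_1,\ldots,U_k$ factor: setting $T_p := \{j : i_j = p\}$, for each $p$ one applies the same argument as in Theorem \ref{ACM} on the symmetric group $S_{T_p}$. The identity permutations in each $S_{T_p}$ produce exactly the desired factorization, while the usual decay balance---$\Tr_{\sigma_1}(A\text{-block}) = O(1)$ from trace-class convergence, the $\tr_n$-volume $O(n^{|T_p|-|\sigma_2|})$ on the $B$-side, and $\Wg(\sigma_3,n) = O(n^{-|T_p|-|\sigma_3|})$---ensures that all other permutation patterns contribute $O(n^{-1})$ in expectation. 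A fourth-moment bound modeled on Lemma \ref{ACM2} then supplies variance $O(n^{-2})$, promoting convergence in expectation to almost sure convergence. Combining with the vanishing of the $j_0$-th factor completes the proof.

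\textbf{Main obstacle.} The technical heart is the multi-Haar Weingarten bookkeeping, since each $C_j$ itself involves two possibly distinct Haar unitaries and the Weingarten analysis must be carried out simultaneously for permutations in $\prod_p S_{T_p}$. Nonetheless, once the joint $\tr_n$-distribution of $(C_1,\ldots,C_k)$ is known to converge almost surely---which follows from Voiculescu's asymptotic freeness of independent Haar unitaries combined with the convergence hypothesis on the $B_j$'s---the dominant-permutation argument adapts from Theorem \ref{ACM3} with only notational changes.
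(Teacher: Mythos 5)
Your rewriting via cyclicity to the form $\Tr_n(A_1 C_1\cdots A_k C_k)$ with $C_j:=U_{i_j}^*B_j U_{i_{j+1}}$ is fine, and your Borel--Cantelli argument that $\tr_n(C_{j_0})\to 0$ almost surely is correct. The gap is in the step you flag as the technical heart: the claimed ``cyclic monotone factorization''
\[
\lim_{n\to\infty}\Tr_n(A_1C_1\cdots A_kC_k)=\lim_{n\to\infty}\Tr_n(A_1\cdots A_k)\prod_{j=1}^k\lim_{n\to\infty}\tr_n(C_j)
\]
is not established, and in the case of interest it is essentially equivalent to the statement being proved. None of Theorems~\ref{ACM}, \ref{ACM3}, \ref{GACM3} covers a pair $(\{A_i\},\{C_j\})$ where each $C_j$ involves \emph{two} distinct Haar unitaries ($U_{i_j}^*\cdot U_{i_{j+1}}$) rather than a conjugation by a single unitary. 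The sketched ``integrations factor, apply Theorem~\ref{ACM} on $S_{T_p}$'' does not produce the displayed formula: after integrating $U_p$ the dominant Weingarten term gives the $A$-contribution $\Tr_n\bigl(\prod_{j\in T_p}A_j\bigr)$ for each $p$ separately, so the leading-order structure is $\prod_p\Tr_n\bigl(\prod_{j\in T_p}A_j\bigr)\cdot(\text{buffer traces})$, not $\Tr_n(A_1\cdots A_k)\prod_j\tr_n(C_j)$. And when at least two of the $i_j$ differ, both sides of your factorization formula equal $0$, so proving that formula amounts to proving the proposition itself---the step is circular. (An almost-sure version would in addition require a variance estimate you do not supply.)

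The paper takes a more economical route. It fixes two of the indices, say $1,2\in\{i_1,\dots,i_k\}$, conditions on $U_2,\dots,U_k$, and applies Weingarten \emph{only over $U_1$} to the conditional \emph{second} moment $\bE^{U_1}\!\bigl[|\Tr_n(\cdots)|^2\bigr]$. After the cyclicity rewriting, the buffer factors $C_i$ (in the paper's notation) are products that contain a trace-class factor $U_2A_rU_2^*$, so every cycle of $\sigma_1$ hitting such a factor yields a trace that is $O(1)$ rather than $O(n)$; with two independent copies one loses two powers of $n$, giving $\bE^{U_1}[|\Tr_n(\cdots)|^2]=O(n^{-2})$ almost surely, and conditional Borel--Cantelli finishes. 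This sidesteps the need for any new multi-Haar cyclic monotone independence theorem. You should either reproduce an estimate of that type, or rigorously prove the factorization you invoke---but as written, your invocation of it is unjustified and effectively assumes the conclusion.
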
 
\begin{proof} We may suppose that $1,2 \in \{i_1,\dots, i_k\}$. By $\bE^{U_1}$ we denote the expectation with respect 
to $U_1$, leaving $U_2, \dots, U_k$ unchanged (that is, the conditional expectation onto the $\sigma$-field generated 
by $U_2,\dots, U_k$). We show that  almost surely  
\be\label{eq54}
\bE^{U_1}\!\left[\left|\Tr_n(U_{i_1}A_{1}U_{i_1}^* B_1 \cdots U_{i_k} A_k U_{i_k}^* B_k)\right|^2\right] =O(n^{-2}). 
\ee
 By the cyclic property of the trace and by the obvious property $\overline{\Tr_n(X)}=\Tr_n(X^*)$, the LHS of \eqref{eq54} equals    
\be\label{eq55}
\begin{split}
\bE^{U_1}[\Tr_n(C_1 U_1 D_1U_1^* \cdots C_\ell U_1 D_\ell U_1^* ) \Tr_n(C_{\ell+1} U_1 D_{\ell+1}U_1^* \cdots C_{2\ell} U_1 D_{2\ell} U_1^* )], 
\end{split}
\ee
where $C_i$'s are products of $B_p, B_p^*, U_{q} A_r U_{q}^*$ and $U_{q} A_r^* U_{q}^*$ with $q\geq2$ 
and $D_i \in\{A_r^{\ep}: \ep\in\{1,*\}, 1 \leq r \leq k\}$. At least one of the matrices $C_1,\dots, C_\ell$ must have a 
factor $U_2 A_r U_2^*$ for some $r$ and similarly for $\{C_{\ell+1},\dots, C_{2\ell} \}$. Let $Z^{\cup2}$ be the 
permutation $(1,\dots, \ell)(\ell+1,\dots, 2\ell)$. By the Weingarten formula \eqref{Wg2}, the quantity \eqref{eq55} equals 
\be
\sum_{\substack{\sigma_1,\sigma_2,\sigma_3\in S_{2\ell}\\ \sigma_1\sigma_2\sigma_3=Z^{\cup2}}}\Tr_{\sigma_1}(C_1, \dots, C_{2\ell})\Tr_{\sigma_2}(D_1,\dots, D_{2\ell}) \Wg(\sigma_3,n). 
\ee
By assumption $\Tr_{\sigma_2}(D_1,\dots, D_{2\ell})=O(1)$ and by inspection 
$\Tr_{\sigma_1}(C_1,\dots, C_{2\ell}) = O(n^{2\ell -2})$. By \eqref{eq:decay} $\Wg(\sigma_3,n)$ 
behaves as $O(n^{-2\ell-|\sigma_3|})$. Thus, we obtain the behavior $O(n^{-2})$ of \eqref{eq54}. By taking the 
sum $\sum_{n=1}^\infty$ and by using the conditional monotone convergence theorem we obtain the conclusion. 
 \end{proof}
Therefore, a monomial of $\{U_i A_i U_i^*\}_{i=1}^k,\{B_i\}_{i=1}^k$ is trivial with respect to $\Tr_n$ if it contains at least two 
distinct factors from $\{U_i A_{i} U_i^*\}_{i=1}^k$. The nontrivial case is only when one factor from $\{U_i A_{i} U_i^*\}_{i=1}^k$ 
appears in each monomial; for instance  
\be
\sum_{i=1}^k B_i U_i A_i U_i^* B_i^*. 
\ee
We compute the eigenvalues of this model in Proposition \ref{enhancedACM3}.

\section{Some concrete computation of eigenvalues}\label{sec5}

\subsection{Eigenvalues of polynomials of random matrices}

We provide explicit discrete eigenvalues of some polynomials of random matrices converging to compact operators. 
Results in Section \ref{sec:rmt} show that the computation of the eigenvalues reduces to the eigenvalues of polynomials 
of cyclically monotone elements. Then the computations in Section \ref{secEVCM} give the corresponding results on large 
random matrices converging to trace class operators. We can then show the results in the compact setup (i.e.\ the same 
assumptions as in Theorem \ref{enhancedACM}) by approximation.

\begin{theorem} \label{enhancedACM2} 
Let $A_i =A_i(n), B_i=B_i(n), i=1,\dots,k$ be deterministic $n\times n$ matrices and $U=U(n)$ be an $n\times n$ Haar unitary such that 
\begin{enumerate}[\rm(1)] 
\item $A_1,\dots, A_k$ are Hermitian, 
\item\label{bounded1-2} $((A_1,\dots, A_k), \Tr_n)$ converges in compact distribution to a $k$-tuple of compact 
operators $((a_1,\dots, a_k), \Tr_H)$ as $n\to\infty$ (see Definition \ref{def:cv-compact}),  
\item $((B_1,\dots, B_k), \tr_n)$ converges in distribution to a $k$-tuple of elements in a non-commutative probability space as $n\to\infty$,   
\item\label{bounded-2} $\sup_{n\in\N}\|B_i(n)\|<\infty$ for every $i=1,\dots,k$.   
\end{enumerate}
In this case let $\beta_i:= \lim_{n\to\infty} \tr_n(B_i)$, $\beta_{i j}:= \lim_{n\to\infty}\tr_n(B_i^* B_j)$ and 
$B:= (\beta_{i j})_{i,j=1}^k$. The following statements hold true.

\begin{enumerate}[\rm(i)] 
\item\label{A2} We have 
\[
\lim_{n\to\infty}\EV\!\left(\sum_{i=1}^k UB_i U^* A_i (U B_i U^*)^\ast\right) = \EV\!\left(\sqrt{B} \diag(a_1,\dots, a_k) \sqrt{B}\right) \text{~a.s.},
\]
 where $\sqrt{B} \diag(a_1,\dots, a_k) \sqrt{B}$ is viewed as an element of $(M_k(\C) \otimes\cS^1(H), \Tr_k \otimes \Tr_H)$. 

\item\label{D2}  Suppose that $B_1,\dots, B_k$ are Hermitian. Then  
\[
\lim_{n\to\infty}\EV\!\left(\sum_{i=1}^k A_i U B_i U^* A_i\right) = \EV\!\left(\sum_{i=1}^k \beta_i a_i^2\right) \text{~a.s.}
\]

\item\label{B2} Suppose that $k=1$ and $B_1$ is Hermitian. Let $p = \sqrt{\beta_{11}} + \beta_1, q = -(\sqrt{\beta_{11}}-\beta_1).$ 
 Then 
 $$
 \lim_{n\to\infty} \EV(A_1 UB_1U^* + UB_1U^* A_1) = (p \EV(a_1) ) \sqcup (q \EV(a_1) ) \text{~a.s.}
 $$ 

\item\label{C2} Suppose that $k=1$ and $B_1$ is Hermitian. Let $r:= \sqrt{\beta_{11} -\beta_1^2}$. Then 
$$
\lim_{n\to\infty}\EV(\ri [A_1, UB_1U^*]) = (r \EV(a_1 ) ) \sqcup (-r \EV(a_1) ) \text{~a.s.}
$$
\end{enumerate}
\end{theorem}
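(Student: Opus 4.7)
The plan is to reduce each of the four identities to the matching item of Theorem \ref{compute} via the asymptotic cyclic monotonicity of Theorem \ref{ACM3}, and then to upgrade from the trace class regime to the compact regime via the truncation device used in the proof of Theorem \ref{enhancedACM}.

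In the trace class regime, that is, under the stronger hypothesis that $((A_1,\dots,A_k),\Tr_n)$ converges in distribution to trace class operators $(a_1,\dots,a_k)$, Theorem \ref{ACM3} gives, almost surely, the asymptotic cyclic monotonicity of the pair $(\{A_i\}_{i=1}^k,\{UB_iU^*\}_{i=1}^k)$ with respect to $(\Tr_n,\tr_n)$. In the resulting abstract limit space $(\cC,\omega,\tau)$, the limits $b_i$ of $UB_iU^*$ satisfy $\tau(b_i)=\beta_i$ and $\tau(b_i^*b_j)=\beta_{ij}$. Corollary \ref{convergence EV} then identifies the almost sure eigenvalue limit of each of the four polynomials with the eigenvalue multiset of the same polynomial in the cyclically monotone pair $(\{a_i\},\{b_i\})$, and parts \eqref{A}, \eqref{D}, \eqref{B}, \eqref{C} of Theorem \ref{compute} produce precisely the four formulas \eqref{A2}--\eqref{C2}.

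For the general compact case I would rerun the truncation argument from the proof of Theorem \ref{enhancedACM}. Pick $\epsilon_j\downarrow 0$ avoiding all relevant spectral points, let $f_j\in\CB$ be the continuous truncation that agrees with the identity outside $[-\epsilon_j,\epsilon_j]$ and vanishes on $[-\epsilon_{j+1},\epsilon_{j+1}]$, and set $A_i^{(j)}:=f_j(A_i)$ and $a_i^{(j)}:=f_j(a_i)$. By the definition of convergence in compact distribution, the tuple $((A_1^{(j)},\dots,A_k^{(j)}),\Tr_n)$ converges to the finite-rank tuple $((a_1^{(j)},\dots,a_k^{(j)}),\Tr_H)$, so the trace class case just proved applies and yields the four identities for the truncated polynomials. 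Functional calculus gives the uniform norm estimates $\|A_i-A_i^{(j)}\|\leq\epsilon_j$ (uniformly in $n$) and $\|a_i-a_i^{(j)}\|\leq\epsilon_j$; combined with the assumption $\sup_n\|B_i(n)\|<\infty$ and Weyl's inequality for singular values, these show that, uniformly in $n$, each of the four polynomials in $(A_1,\dots,A_k)$ is approximated in eigenvalues by the polynomial in $(A_1^{(j)},\dots,A_k^{(j)})$ with error tending to $0$ as $j\to\infty$, and the analogous statement holds on the limit side for $(a_1,\dots,a_k)$ and $(a_1^{(j)},\dots,a_k^{(j)})$. Exchanging $n\to\infty$ and $j\to\infty$ then delivers the claimed formulas almost surely.

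The main obstacle I foresee is the uniform eigenvalue approximation on the limit side in \eqref{A2} and \eqref{D2}, where the limit objects $\sqrt{B}\diag(a_1,\dots,a_k)\sqrt{B}$ and $\sum_i\beta_i a_i^2$ are themselves only compact, so that one cannot simply invoke the explicit trace class formulas directly. Both cases are however handled by the operator norm bound $\|\diag(a_i)-\diag(a_i^{(j)})\|\leq\epsilon_j$ combined with Weyl's inequality, which controls the eigenvalue differences between the truncated and un-truncated compact operators uniformly. Parts \eqref{B2} and \eqref{C2} pose no difficulty, since their limit objects are already linear in $a_1$ and the norm approximation passes through at once.
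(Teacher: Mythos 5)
Your proposal is correct and matches the paper's proof essentially step by step: both reduce to the trace class setting via Theorem~\ref{ACM3}, Corollary~\ref{convergence EV}, and Theorem~\ref{compute}, and then pass to the compact case using the truncation $f_j$ from the proof of Theorem~\ref{enhancedACM} together with the operator norm bound $\|A_i-A_i^{(j)}\|\leq\ep_j$ and Weyl's inequality. The obstacle you flag about the limit-side approximation in items (i) and (ii) is exactly the point the paper addresses at the end of its proof of item (i), using $\|\sqrt{B}\diag(a_i^{(j)})\sqrt{B}-\sqrt{B}\diag(a_i)\sqrt{B}\|\to 0$ and Weyl's inequality.
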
 
\begin{proof} 
We only show \eqref{A2} since the other cases are similar. 
We reuse the notations and proof of Theorem \ref{enhancedACM}. In particular $U B_i U^*$ is abbreviated to $B_i$. Now the polynomial $P$ is 
\be
P(x_1,\dots, x_k, y_1,\dots, y_k) = \sum_{i=1}^k y_i x_i y_i^*. 
\ee
Recall that 
\begin{align}
&\{\lambda_i(n)\}_{i\geq1}=\EV\left(P(A_1,\dots, A_k, B_1,\dots, B_k)\right), \\
&\{\lambda_i\}_{i\geq1}=\lim_{n\to\infty}\EV\left(P(A_1,\dots, A_k, B_1,\dots, B_k) \right) \text{~a.s.}, \\
&\{\lambda_i^{(j)}(n)\}_{i\geq1} = \EV\!\left(P(A_1^{(j)},\dots, A_k^{(j)}, B_1,\dots, B_k)\right), \\ 
&\{\lambda_i^{(j)}\}_{i\geq1}=\lim_{n\to\infty}\EV\!\left(P(A_1^{(j)},\dots, A_k^{(j)}, B_1,\dots, B_k) \right) \text{~a.s.}
\end{align}
From the result in the trace class setup in Theorem \ref{compute} and asymptotic cyclic monotonicity in 
Corollary \ref{convergence EV}, we have the identity
\be\label{eq567}
\{\lambda_i^{(j)}\}_{i\geq1} = \EV\!\left(\sqrt{B} \diag(a_1^{(j)},\dots, a_k^{(j)}) \sqrt{B}\right). 
\ee
Now we define another sequence of eigenvalues 
\be
\{\lambda_i'\}_{i\geq1}:= \EV\!\left(\sqrt{B} \diag(a_1,\dots, a_k) \sqrt{B}\right). 
\ee
Our goal is to demonstrate that $\lambda_i=\lambda_i'$ for every $i\in\N$.  For this we use the inequality  
\be
\begin{split}
\left|\lambda_i^\pm -(\lambda_i')^\pm\right| 
\leq & \left|\lambda_i^\pm -\lambda_i^\pm(n)\right| +\left|\lambda_i^\pm(n)-(\lambda_i^{(j)})^\pm(n) \right| \\
&+\left|(\lambda_i^{(j)})^\pm(n) - (\lambda_i^{(j)})^\pm\right| + \left| (\lambda_i^{(j)})^\pm- (\lambda_i')^\pm\right|.   
\end{split}
\ee
We proved in the proof of Theorem \ref{enhancedACM} that the first term on the RHS converges to $0$ as $n\to\infty$ and 
proved in \eqref{first} that the second term is bounded by $\delta_j$ uniformly on $n$. The third term converges to $0$ 
as $n\to\infty$ from the result in the trace class setup. Taking $\varlimsup_{j\to\infty}\varlimsup_{n\to\infty}$ we get 
\be
\left|\lambda_i^\pm -(\lambda_i')^\pm\right| 
\leq \varlimsup_{j\to\infty}\left| (\lambda_i^{(j)})^\pm- (\lambda_i')^\pm\right|.   
\ee
By \eqref{eq567} it now suffices to show that 
\be
\lim_{j\to\infty} \EV\!\left(\sqrt{B} \diag(a_1^{(j)},\dots, a_k^{(j)}) \sqrt{B}\right) = \EV\!\left(\sqrt{B} \diag(a_1,\dots, a_k) \sqrt{B}\right), 
\ee
which follows from the fact 
\be
\lim_{j\to\infty}\left\|\sqrt{B} \diag(a_1^{(j)},\dots, a_k^{(j)}) \sqrt{B} - \sqrt{B} \diag(a_1,\dots, a_k) \sqrt{B}\right\| = 0 
\ee
and Weyl's inequality \cite[Corollary III.2.6]{Bhatia} which bounds the difference of eigenvalues by the operator 
norm of the difference of elements. 
\end{proof}

When several independent Haar unitaries are involved we can still compute the eigenvalues. For example we obtain the following. 
\begin{proposition} \label{enhancedACM3} 
Under the assumptions of Theorem \ref{enhancedACM2}, we take independent Haar unitaries $U_i=U_i(n), i=1,\dots, k$.  
\begin{enumerate}[\rm(i)]
\item\label{A6} Let $\beta_i:= \lim_{n\to\infty} \tr_n(B_i)$. Then 
\[
\lim_{n\to\infty}\EV\!\left(\sum_{i=1}^k U_iB_i U_i^* A_i (U_i B_i U_i^*)^\ast\right) = \EV\!\left(\sum_{i=1}^k |\beta_i|^2 a_i\right) \text{~a.s.}
\]
\item\label{B6} Let $\gamma_i:= \lim_{n\to\infty} \tr_n(B_i^*B_i)$. Then we have 
\[
\lim_{n\to\infty}\EV\!\left(\sum_{i=1}^k B_i U_i A_i U_i^* B_i^*\right) = \EV(\gamma_1 a_1) \sqcup \cdots \sqcup \EV(\gamma_k a_k) \text{~a.s.}  
\]
\end{enumerate}
\end{proposition}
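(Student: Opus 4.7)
The plan is to reduce everything to moment computations with respect to $\Tr_n$, exploit the asymptotic cyclic monotone independence provided by Theorem \ref{GACM3}, and invoke Voiculescu's asymptotic freeness of the rotated families $\{U_iB_iU_i^*\}_{i=1}^k$ with respect to $\tr_n$. As in the proof of Theorem \ref{enhancedACM2}, the passage from the trace-class setting to the general compact setting is handled by finite-rank truncation of the $A_i$'s together with Weyl's inequality, so I would focus on moment convergence in the trace-class case; Proposition \ref{prop2.6} (equivalently the reasoning behind Theorem \ref{enhancedACM}) then upgrades moment convergence to almost sure convergence in eigenvalues.

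For \eqref{A6}, write $\tilde B_i := U_iB_iU_i^*$. Expanding the $n$-th power and applying the cyclic property of $\omega$ rewrites each summand as $A_{i_1}(\tilde B_{i_1}^*\tilde B_{i_2})A_{i_2}(\tilde B_{i_2}^*\tilde B_{i_3})\cdots A_{i_n}(\tilde B_{i_n}^*\tilde B_{i_1})$. Theorem \ref{GACM3} then collapses the limiting moment to
\be
\lim_{n\to\infty}\Tr_n\!\left(\Bigl(\sum_{i=1}^k \tilde B_iA_i\tilde B_i^*\Bigr)^{\!n}\right)
=\sum_{i_1,\dots,i_n}\omega(a_{i_1}\cdots a_{i_n})\prod_{k=1}^n \tau(\tilde B_{i_k}^*\tilde B_{i_{k+1}}),
\ee
with cyclic convention $i_{n+1}=i_1$. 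Voiculescu's asymptotic freeness of $\{\tilde B_i\}_{i=1}^k$ with respect to $\tr_n$ evaluates each factor: on the diagonal $\tau(\tilde B_i^*\tilde B_i)=\gamma_i$, and off the diagonal $\tau(\tilde B_i^*\tilde B_j)=\overline{\beta_i}\beta_j$. Substituting into Theorem \ref{enhancedACM2}\eqref{A2} yields $\EV\bigl(\sqrt{B'}\diag(a_1,\dots,a_k)\sqrt{B'}\bigr)$ as the limiting eigenvalue multiset, where $B'$ is the Gram matrix just described; the stated form $\EV(\sum_i|\beta_i|^2a_i)$ then emerges after exploiting the rank-one component $\overline{\beta}\beta^{\!\top}$ of $B'$ and using that the kernel of $B'$ contributes only zero eigenvalues.

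For \eqref{B6}, set $\tilde A_i := U_iA_iU_i^*$ and expand $(\sum_i B_i\tilde A_iB_i^*)^n$ similarly. After a cyclic shift each monomial takes the form $\tilde A_{i_1}(B_{i_1}^*B_{i_2})\tilde A_{i_2}\cdots\tilde A_{i_n}(B_{i_n}^*B_{i_1})$, which is precisely the pattern covered by Proposition \ref{trivial}. Consequently $\Tr_n$ of any word whose index tuple $(i_1,\dots,i_n)$ is not constant tends almost surely to zero, so only the diagonal contributions $i_1=\cdots=i_n$ survive in the limit. For each such $i$, cyclic monotone independence of the pair $(\tilde A_i, B_i^*B_i)$ (the single-Haar version, from Theorem \ref{ACM3}) yields $\omega((\tilde a_iB_i^*B_i)^n)=\omega(a_i^n)\gamma_i^n$, and summing over $i$ produces the moment sequence of $\bigsqcup_{i=1}^k \EV(\gamma_ia_i)$. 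Corollary \ref{cor unique} then identifies the limiting multiset.

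The main obstacle is the algebraic reduction in \eqref{A6}: because asymptotic freeness produces non-trivial cross terms $\overline{\beta_i}\beta_j$ in the Gram matrix, one must identify the correct rank-one factorization that compresses $\sqrt{B'}\diag(a_i)\sqrt{B'}$ to the single operator $\sum_i |\beta_i|^2 a_i$. By contrast, \eqref{B6} is combinatorially cleaner, since Proposition \ref{trivial} eliminates all mixed contributions up front and leaves only a diagonal sum to evaluate.
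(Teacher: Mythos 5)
For \eqref{B6} your argument is essentially the paper's: shift cyclically, invoke Proposition \ref{trivial} to annihilate every mixed index tuple, and then compute each diagonal contribution by single-Haar cyclic monotonicity of the pair $(\{A_i\},\{U_i^*B_i^*B_iU_i\})$ together with Corollary \ref{cor unique} (equivalently Proposition \ref{prop2.6}); this is correct and matches the paper.

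For \eqref{A6} there is a genuine gap, which you flag honestly but do not close. Applying Theorem \ref{compute}\eqref{A} literally, you correctly write the limiting Gram matrix as $B'=(\tau(\tilde b_i^*\tilde b_j))_{i,j}$ and, using freeness of the $\tilde b_i$, obtain $B'_{ij}=\overline{\beta_i}\beta_j$ for $i\neq j$ but $B'_{ii}=\gamma_i:=\lim\tr_n(B_i^*B_i)$ on the diagonal. With this $B'$ the reduction to $\EV\bigl(\sum_i|\beta_i|^2 a_i\bigr)$ does \emph{not} follow: $B'=\diag(\gamma_i-|\beta_i|^2)+vv^*$ (with $v_i=\overline{\beta_i}$) is a rank-one perturbation of a diagonal matrix that is generically nonzero (by Cauchy--Schwarz $\gamma_i\geq|\beta_i|^2$, with equality only when $b_i$ is scalar), so $\sqrt{B'}$ carries no useful rank-one projection and the kernel remark does not help. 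The sanity check $k=1$ already breaks: your Gram matrix is the scalar $\gamma_1$, which gives $\EV(\gamma_1a_1)$ (exactly as Theorem \ref{enhancedACM2}\eqref{A2} would), not $\EV(|\beta_1|^2a_1)$. The paper's own proof bypasses this issue by \emph{declaring} the Gram matrix to be $B'=(\overline{\beta_i}\beta_j)_{i,j=1}^k$, i.e.\ with $|\beta_i|^2$ rather than $\gamma_i$ on the diagonal; then $B'=vv^*$ is genuinely rank one, $\sqrt{B'}=\|v\|^{-1}vv^*$, and $\sqrt{B'}\diag(a_i)\sqrt{B'}=(v^*\diag(a_i)v)\,vv^*/\|v\|^2$, a rank-one projection tensored with $\sum_i|\beta_i|^2a_i$, which immediately gives the asserted eigenvalue set. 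You should therefore either justify why the diagonal contributions $\tau(\tilde b_i^*\tilde b_i)$ can be replaced by $|\beta_i|^2$ in this computation, or acknowledge that, as you have set it up, the moment expansion leads to $\EV\bigl(\sqrt{B'}\diag(a_i)\sqrt{B'}\bigr)$ with $B'_{ii}=\gamma_i$ and does not reach the stated conclusion.
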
 
\begin{proof} Suppose that $((A_1,\dots, A_k), \Tr_n)$ converges in distribution to trace class operators; 
the general compact case is proved by approximation. 

\eqref{A6} By Corollary \ref{G convergence EV}, the computation formula is obtained by the asymptotic cyclic 
monotonicity of $(\{A_i\}_{i=1}^k, \{U_i B_i U_i^* \}_{i=1}^k)$ and then the asymptotic freeness of 
$U_1 B_1 U_1^*$, $\dots$, $U_k B_k U_k^*$. This implies that we only need to replace the covariance matrix $B$ in 
Theorem \ref{enhancedACM2}\eqref{A2} with $B' = (\overline{\beta_i}\beta_j)_{i,j=1}^k \in M_k(\C)$.  The limiting eigenvalues 
are given by those of $\sqrt{B'} \diag(a_1,\dots, a_k) \sqrt{B'}$. Since $\sqrt{B'}= (\sum_{i=1}^k |\beta_i|^2)^{-1/2} B'$, 
the conclusion follows easily.

\eqref{B6} By Proposition \ref{trivial}, for each $\ell \in \N$ we have 
\be
\begin{split}
\Tr_n\!\left(\left(\sum_{i=1}^k B_i U_i A_i U_i^* B_i^*\right)^\ell\right) 
&= \sum_{i=1}^k \Tr_n\!\left(\left(B_i U_i A_i U_i^* B_i^*\right)^\ell\right)+o(1)\\
&= \sum_{i=1}^k \Tr_n\!\left(\left(A_i (U_i^* B_i^*B_i U_i)\right)^\ell\right)+o(1). 
\end{split}
\ee
By the cyclic monotonicity of the pair $(\{A_i\}_{i=1}^k, \{U_i^* B_i^*B_i U_i\}_{i=1}^k)$ (see Theorem \ref{ACM3}), 
we have the almost sure convergence 
\be
\begin{split}
\lim_{n\to\infty}\Tr_n\!\left(\left(\sum_{i=1}^k B_i U_i A_i U_i^* B_i^*\right)^\ell\right) 
&= \sum_{i=1}^k \lim_{n\to\infty}\Tr_n(A_i^\ell) \lim_{n\to\infty}\tr_n(B_i^*B_i)^\ell \\
&= \sum_{i=1}^k \Tr_H( (\gamma_i a_i)^\ell), 
\end{split}
\ee
and the conclusion follows by Proposition \ref{prop2.6}. 
\end{proof}

\subsection{Numerical illustration}
In this subsection, we give numerical illustrations of main theorems of this paper.
\begin{example}

Let $\{Z_{i}\}_{i}$ be a family of independent $n\times n$ non-selfadjoint Gaussian random matrices, 
that is, each $Z_i$ has entries that are independent identically distributed with the standard complex normal distribution. 
We consider a sample covariance matrix $X_{i}=Z_{i}Z_{i}^{\ast}/(2n)$ 
and a diagonal matrix $D=\diag(2^{-1},2^{-2},2^{-3},\dots,2^{-n})$.
From Theorem \ref{ACM}, the matrices $D$ and $X$ are asymptotically cyclically monotone. Table \ref{moments} 
shows numerical simulations of mixed moments and moments decomposed by cyclic monotone independence  when $n=500$.
\begin{table}[htbp]
\caption{Moments ($n=500$)}
\begin{center}
\begin{tabular}{|c|c||c|c|}
\hline
$\Tr(DX_1)$ & 1.65916 &$\Tr(D)\tr(X_1)$&1.64044\\
\hline
$\Tr(DX_1D)$ & 1.11507&$\Tr(D^{2})\tr(X_1)$ &1.08068 \\
\hline
$\Tr(DX_1DX_2)$ & 1.08634&$\Tr(D^{2})\tr(X_1)\tr(X_2)$&1.08034\\
\hline
$\Tr(DX_1DX_2D)$ & 1.02348 &$\Tr(D^{3})\tr(X_1)\tr(X_2)$&1.01548 \\
\hline
$\Tr(DX_1DX_2DX_3)$ & 1.00131   &$\Tr(D^{3})\tr(X_1)\tr(X_2)\tr(X_3)$&1.0176 \\
\hline
\end{tabular}
\end{center}
\label{moments}
\end{table}
\end{example}

\begin{example} 
We give numerical realization of Proposition \ref{enhancedACM3}\eqref{B6}.  
Let $D$ be the $n\times n$ diagonal matrix $\diag(2^{-1},2^{-2},\dots,2^{-n}) $ and 
$U_i,i=1,2,\dots$ be $n\times n$ independent Haar unitary random matrices.
Then the eigenvalues of $D+\sum_{i=1}^kU_iD U_i^\ast$ can be computed by Proposition \ref{enhancedACM3} and converge to 
\be
(\underbrace{2^{-1},\dots, 2^{-1}}_{\text{$k$ times}},\underbrace{2^{-2},\dots, 2^{-2}}_{\text{$k$ times}},\dots). 
\ee 
In Figure \ref{fig} we show the first 14 eigenvalues for $n=1000$.

\begin{figure}[htbp]
\includegraphics[width=5cm]{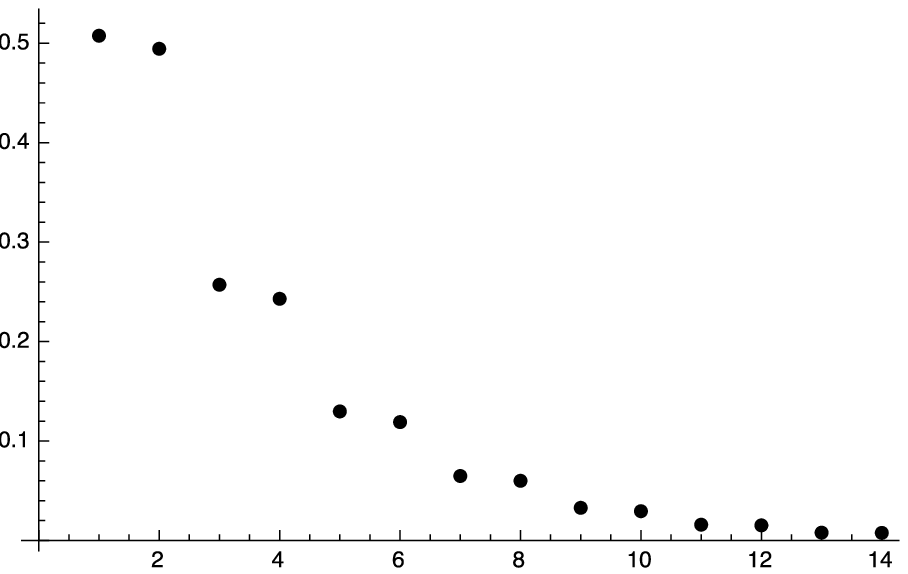}
\includegraphics[width=5cm]{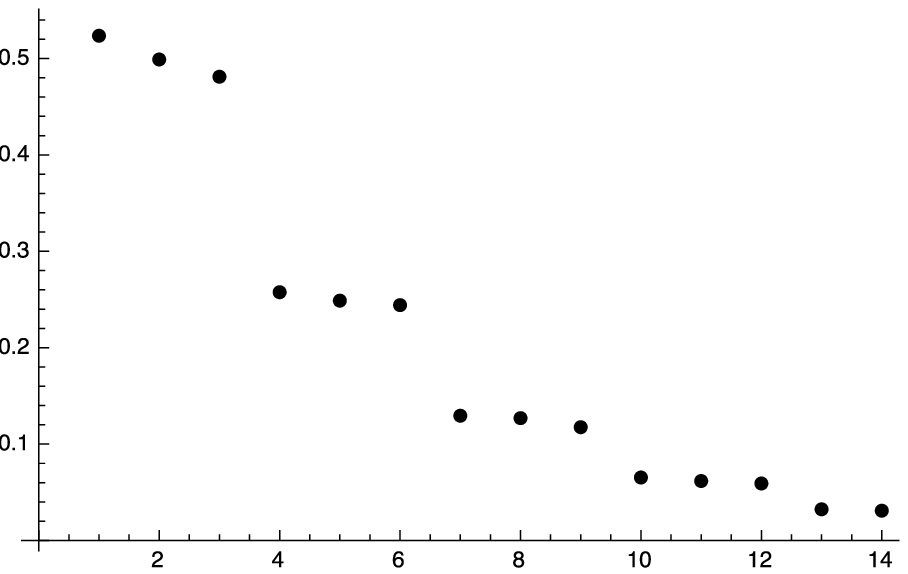}
\includegraphics[width=5cm]{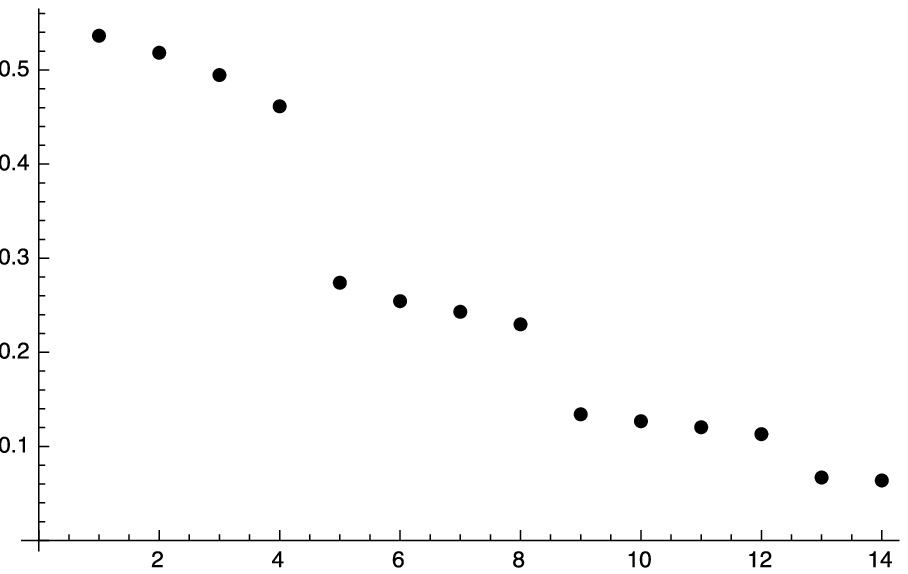}
\caption{Eigenvalues $\lambda_1,\dots, \lambda_{14}$ of $\left\{D+\sum_{i=1}^{k}U_i D U_i^* \right\}_{k=1,2,3}$ ($n=1000$)}
\label{fig}
\end{figure}

\end{example}

\begin{example}
We compute random matrix models of Theorem \ref{enhancedACM2} \eqref{B2} and \eqref{C2}.
Assume that $n=300$. Let $D$ be the $n\times n$ diagonal matrix $\diag(2^{-1},2^{-2},\dots,2^{-n}) $ 
and $U$ be an $n\times n$ Haar unitary. Let $Z$ be an $n\times n$ non-selfadjoint Gaussian random matrix.
We consider $A=U D U^{\ast}$ and $B=Z Z^{\ast}/(2n)$. First, we consider a random matrix model for Theorem \ref{enhancedACM2} \eqref{B2}. We compute $\EV(A B+BA)$, $\EV(p A)\sqcup \EV(q A)$ and the theoretical limiting eigenvalues up to 6th. 
In this case, we obtained $p=2.4174$ and $q=-0.41351$. 
We also compute moments of $AB+BA$ 
with respect to the non-normalized trace $\Tr_n$ and factorized moments arising from cyclic monotone independence.

Second, we consider a random matrix model for Theorem \ref{enhancedACM2} \eqref{C2}. We compute $\EV(\ri(AB-BA))$, $\EV(r A)\sqcup \EV(-r A)$ and the theoretical limiting eigenvalues up to 10th. In this case, we have $r=0.98972$. We also compute the moments and factorized moments.
\begin{figure}[htbp]
\begin{center}
\includegraphics[width=7cm]{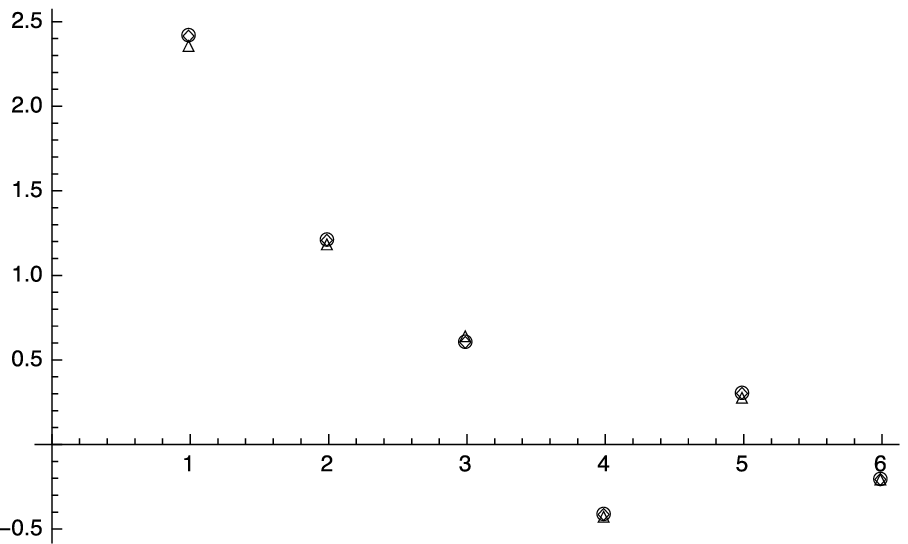}
\quad
\includegraphics[width=7cm]{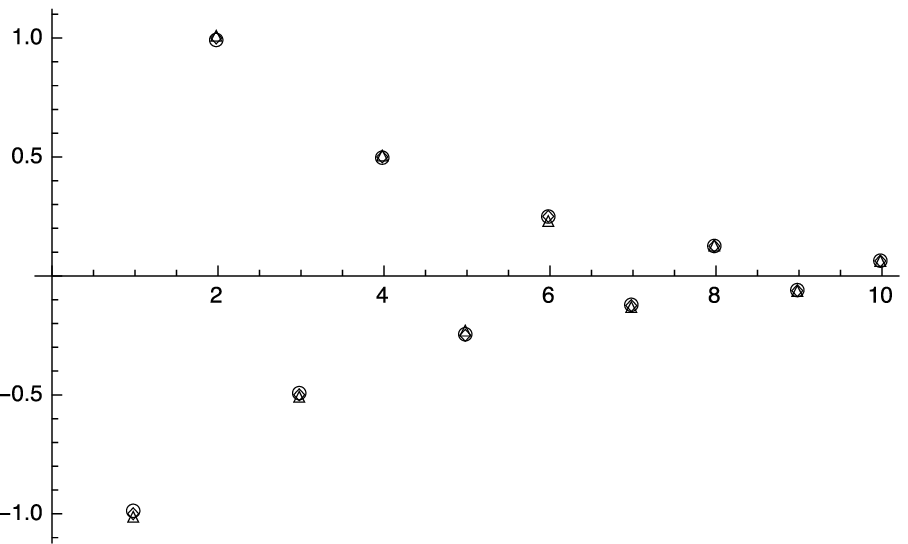}
\caption{Left: comparing $\EV(A B+BA)$ (triangle), $\EV(p A)\sqcup \EV(q A)$ (circle) and the theoretical limiting eigenvalues (square). 
Right: comparing $\EV(\ri(AB-BA))$ (triangle), 
$\EV(r A)\sqcup \EV(-r A)$ (circle) and the theoretical limiting eigenvalues (square).}
\end{center}
\label{fig2}
\end{figure}

\begin{table}[htbp]
\caption{Moments $(n=300)$}
\begin{center}
\begin{tabular}{|c|c||c|}
\hline
$k$&$\mathrm{Tr}((AB+BA)^{k})$ &$\mathrm{Tr}(A^{k})(p^{k}+q^{k})$\\
\hline
1 & 3.91122 &  4.00779  \\ \hline
2 & 7.71417   & 8.01973  \\ \hline
3 & 14.9663  & 16.0641   \\
\hline
\end{tabular}
\begin{tabular}{|c|c||c|}
\hline
$k$&$\mathrm{Tr}((i(AB-BA))^{2k})$ &$\mathrm{Tr}(2A^{2k}) r^{2k}$\\
\hline
1 & 2.70837 &  2.61211  \\ \hline
2 & 2.22769  & 2.04693 \\ \hline
3 & 2.16952  & 1.90957 \\
\hline
\end{tabular}
\end{center}
\label{moments2}
\end{table}

\end{example}

\section*{Acknowledgements} 
All authors were supported by JSPS KAKENHI Grant Number 26800048, 15K17549 and 15K04923, respectively.
B. Collins was supported by NSERC discovery and accelerator grant, and
ANR grant SToQ.


\end{document}